\providecommand{\U}[1]{\protect\rule{.1in}{.1in}}
\newtheorem{teo}{Theorem}[section]
\newtheorem{prop}[teo]{Proposition}
\newtheorem{cor}[teo]{Corollary}
\newtheorem{ex}[teo]{Example}
\newtheorem{obs}[teo]{Remark}
\newtheorem{lema}[teo]{Lemma}
\newtheorem{final remark}[teo]{Final Remark}
\newtheorem{definition}[teo]{Definition}
\newcommand{\an}{\left \Vert} 
\newcommand{\fn}{\right \Vert} 
\newcommand{\ap}{\left (} 
\newcommand{\fp}{\right )} 
\newcommand*{\longhookrightarrow}{\ensuremath{\lhook\joinrel\relbar\joinrel\rightarrow}}
\begin{document}

\title{\sc Polynomial ideals from a nonlinear viewpoint}
\date{}
\author{Geraldo Botelho\thanks{Supported by CNPq Grant
305958/2014-3 and Fapemig Grant PPM-00490-15.}  ~and Ewerton R. Torres\thanks{Supported by a CAPES postdoctoral scholarship.\hfill\newline2010 Mathematics Subject
Classification: 47L22, 46G25, 47L20, 47B10, 47A80, 46B28. \newline Keywords: Banach spaces, homogeneous polynomials, hyper-ideals, two-sided ideals.}}\maketitle

\begin{abstract} Classes of homogeneous polynomials between Banach spaces have been studied in the last three decades from the perspective of the so-called ideal property: if a polynomial $P$ belongs to a class $\cal Q$, then the composition $u \circ P \circ v$ of $P$ with {\it linear operators} $u$ and $v$ belongs to $\cal Q$ as well. In an attempt to explore the nonlinearity of the subject in a more consistent way, and taking into account recent results in the field, in this paper we propose the study of classes of homogeneous polynomials that are stable under the composition with homogeneous polynomials. Some important classes justify the study of the intermediate concept of classes of polynomials $\cal Q$ such that if $P$ belongs to $\cal Q$, $u$ is a linear operator and $Q$ is a homogeneous polynomial, then $u \circ P \circ Q$ belongs to $\cal Q$.
\end{abstract}

\section{Introduction and background}

In 1983, A. Pietsch \cite{pietsch} started the study of ideals of multilinear operators between Banach spaces as an extension of the successful theory of ideals of linear operators (operator ideals). Naturally enough, ideals of homogeneous polynomials between Banach spaces begun to be studied soon afterwards. It is difficult to point out the first occurrence of the concept of polynomial ideals, but the idea goes back at least to 1984 with the thesis \cite{teseandreas} of H.-A. Braunss. In Floret \cite[1.9]{klausathens} one can find a digression on the appearance of the concept of polynomial ideals.

For the last three decades, a huge amount of work has been done on polynomial ideals. The theory has been developed {\it pari passu} with the theory of ideals of multilinear operators (multi-ideals), and connections have been established with other topics, such as (just a few references are given): infinite dimensional holomorphy \cite{sonia, carandocollect, gonzagut},  topological tensor products \cite{carandoqjm, erhanpilar, galicerJMAA}, ultrapower stability \cite{klausathens, klauspams}, quantum information theory \cite{montanaro, davidnacho}, Dirichlet series \cite{DefantManolo, DefantPopa}, integral formulas/stable measures \cite{carandolaa}, coherence/compatibility \cite{carandocomp, joilson}, Bishop-Phelps-Bollob\'as-Lindenstrauss circle of ideas \cite{acosta, carandoJMAA},  unconditional bases \cite{carandoqjm, DefantDavid}, interpolation theory \cite{variosJFA, BJ, defantpablo}, classical inequalities (Hardy--Littlewood, Bohnenblust--Hille, Blei) \cite{danielisrael, variosJFA, danielgustavoarchiv, defantpablo, popa}, summability properties \cite{achour, mariodaniel, danieljoedsonmz}, approximation properties \cite{sonia, erhan, erhanpilar}, extension of multilinear operators and polynomials \cite{castillo, galicerJMAA, david}, Aron-Berner stability \cite{pablo, galicerJMAA}, hypercyclic convolution operators \cite{fourier, carandohyper}, homological methods \cite{castillo}, lineability/spaceability \cite{mario, danieljuan}.

Roughly speaking, a polynomial ideal is a class $\cal Q$ of homogeneous polynomials between Banach spaces such that if a polynomial $P$ belongs to $\cal Q$ and $u$ and $v$ are {\it linear operators} with compatible domains, then the composition $u \circ P \circ v$ belongs to $\cal Q$ as well. On the one hand, compositions with linear operators have the advantage of keeping the degree of homogeneity, that is, if $P$ is an $n$-homogeneous polynomial, then the composition $u \circ P \circ v$ is an $n$-homogeneous polynomial as well. On the other hand, besides of being a nonlinear theory with an excessive linear flavor, this approach disregards the fact that compositions of homogeneous polynomial with homogeneous polynomials are homogeneous polynomials as well. We are thus compelled to the consideration of classes of homogeneous polynomials $\cal Q$ such that if $P$ belongs to $\cal Q$ and $R$ and $Q$ are homogeneous polynomials with compatible domains, then the composition $R \circ P \circ Q$ belongs to $\cal Q$ as well. This stronger condition has been recently studied for some specific classes, see e.g. \cite{DefantPopa, popapalermo, popacentral}. The aim of this paper is to start the systematic study of classes of homogeneous polynomials that enjoy this stronger condition, classes which we call {\it polynomial two-sided ideals}. We believe that the study of two-sided ideals explores the nonlinearity of the subject in a more consistent, deep and fruitful way.

Like any other mathematical concept, polynomial two-sided ideals are worth studying only if many interesting and useful examples are known. In this paper we pay a special attention to the production of illustrative examples, including well studied classes as well as new classes we introduce here. It is worth mentioning that, as expected, some well studied polynomial ideals fail to be two-sided ideals; but, somewhat surprisingly, every time we meet an important polynomial ideal ${\cal Q}_1$ that is not a two-sided ideal, we immediately provide a two-sided ideal ${\cal Q}_2$ that plays, in the two-sided ideal setting, a role similar to the role played by ${\cal Q}_1$ in the polynomial ideal setting.

Scrutinizing the already studied important polynomial ideals, we sometimes found ourselves in the following situation: a given polynomial ideal $\cal Q$ fails to be a two-sided ideal but satisfies a condition that is intermediate between the conditions satisfied by polynomial ideals and two-sided ideals, namely: if a homogeneous polynomial $P$ belongs to $\cal Q$ and the linear operator $u$ and the homogeneous polynomial $Q$ have compatible domains, then the composition $u \circ P \circ Q$ also belongs to $\cal Q$. For instance, the class ${\cal P}_{\cal W}$ of weakly compact polynomials is a polynomial ideal, fails to be a two-sided ideal and satisfies this intermediate condition. In order not to leave such important classes behind, we study the classes enjoying this intermediate condition, which shall be called {\it polynomial hyper-ideals}. Observe that this kind of composition also makes sense for multilinear operators. The corresponding concept for multilinear operators, called {\it multilinear hyper-ideals}, was studied in \cite{ewerton, ewerton2}, a study from which we shall take advantage repeatedly.

The paper is organized as follows. In Section \ref{sec2} we define polynomial two-sided ideals and hyper-ideals and give some of their general properties. In Section \ref{sec:mfatat} we give plenty of examples, some of them with distinguished properties, and counterexamples. For instance, we identify the smallest polynomial two-sided ideal (and hyper-ideal), namely, the class of hyper-nuclear polynomials (cf. Theorem \ref{snmb}). The aim of Section \ref{multil} is to establish the relationship between polynomial two-sided and hyper ideals with multilinear hyper-ideals. We show that the class of polynomials generated by a multilinear hyper-ideal is a polynomial hyper-ideal and give sufficient conditions for it to be a two-sided ideal. Sections \ref{ineqmeth} and \ref{boundmeth} provide methods to generate polynomial hyper/two-sided ideals. In particular, some well known classes will arise as further examples of polynomial hyper/two-sided ideals. In Section \ref{compideals} we make a thorough study of the classical composition polynomial ideals ${\cal I} \circ {\cal P}$, where $\cal I$ is an operator ideal. We prove that ${\cal I} \circ {\cal P}$ is always a polynomial hyper-ideal and we give necessary and sufficient conditions on $\cal I$ for ${\cal I} \circ {\cal P}$ to be a polynomial two-sided ideal. Examples of operator ideals $\cal I$ satisfying such necessary and sufficient conditions are provided.

All Banach spaces are over the scalar field $\mathbb{K} = \mathbb{R}$ or $\mathbb{C}$. By ${\cal P}(^nE;F)$ we mean the Banach space of all continuous $n$-homogeneous polynomials from $E$ to $F$ endowed with the usual sup norm, which shall be denoted by $\|\cdot\|$. When $F = \mathbb{K}$ we write ${\cal P}(^nE)$, when $n = 1$ we write ${\cal L}(E;F)$ and when $n = 1$ and $F = \mathbb{K}$ we write $E'$. For background on spaces of polynomials we refer to \cite{dineen, mujica}.

By a \textit{class of polynomials} we mean a subclass $\mathcal{Q}$ of the class  $\cal P$ of all continuous homogenous polynomials between Banach spaces endowed with a function $\|\cdot\|_\mathcal{Q}\colon {\cal Q}\longrightarrow[0,\infty)$. Given classes of polynomials $\mathcal{Q}$ and $\mathcal{R}$ and a sequence of positive numbers $(C_n)_{n=1}^\infty$, the symbol $\mathcal{Q}\stackrel{(C_n)_{n}}{\longhookrightarrow}\mathcal{R}$ means that $\mathcal{Q}\subseteq\mathcal{R}$ and $\|P\|_\mathcal{R}\le C_n\|P\|_\mathcal{Q}$ for every $P\in\mathcal{Q}(^nE;F)$. When $C_n=1$ for every $n\in\mathbb{N}$, we simply write
$\mathcal{Q}\longhookrightarrow\mathcal{R}$.

The identity operator on the Banach space $E$ shall be denoted by $id_E$. We use the standard notation from the theory of operator ideals (see \cite{klauslivro, pietschlivro}).

\section{Hyper-ideals and two-sided ideals}\label{sec2}

The next definition introduces the notions of ($p$-normed, $p$-Banach) polynomial hyper-ideals and two-sided ideals and recalls the notion of polynomial ideals. Given $\varphi \in E'$ and $y \in F$, consider the $n$-homogeneous polynomial $\varphi^n \otimes y \in {\cal P}(^nE;F)$ given by $\varphi^n \otimes y(x) = \varphi(x)^ny$. Finite sums of polynomial of the this type are called {\it polynomials of finite type}.

\begin{definition}\label{dhip}\rm Let $0<p\le1$, $\mathcal{Q}$ be a class of polynomials, $(C_n)_{n=1}^\infty$ be a sequence of positive real numbers with $C_n\ge1$ for every $n\in\mathbb{N}$ and $C_1=1$, and $(C_n,K_n)_{n=1}^\infty$ be a sequence of pairs of positive real numbers with $C_n,K_n\ge1$ for every $n\in\mathbb{N}$ and $C_1=K_1=1$. For all $n\in \mathbb{N}$ and Banach spaces $E$ and $F$, assume that: (i) the component $$\mathcal{Q}(^nE;F):=\mathcal{P}(^nE;F)\cap \mathcal{Q}$$ is a linear subspace of $\mathcal{P}(^nE;F)$ containing the $n$-homogenous polynomials of finite type, (ii) the restriction of $\|\cdot\|_{\mathcal{Q}}$ to $\mathcal{Q}(^nE;F)$ is a $p$-norm, (iii) $\|\widehat{I_n}\colon \mathbb{K} \longrightarrow\mathbb{K}~,~ \widehat{I_n}(\lambda)=\lambda^n\|_{\mathcal{Q}}=1$ for every $n$.\\
(a) We say that $\mathcal{Q}$ is a \textit{$p$-normed polynomial $(C_n)_{n=1}^\infty$-hyper-ideal} if it satisfies the\\
 {\bf Hyper-ideal property:} For $n,m \in \mathbb{N}$, and Banach spaces $E$, $F$, $G$ and $H$, if $P\in\mathcal{Q}(^nE;F)$, $Q\in\mathcal{P}(^mG;E)$ and $t\in\mathcal{L}(F;H)$, then $t\circ P\circ Q\in \mathcal{Q}(^{mn}G;H)$ and
$$\|t\circ P\circ Q\|_{\mathcal{Q}}\le C_m^n\cdot\|t\|\cdot \|P\|_{\mathcal{Q}}\cdot\|Q\|^n.$$

When $C_n=1$ for every $n\in\mathbb{N}$, we simply say that $\mathcal{Q}$ is a {\it $p$-normed polynomial hyper-ideal}. If the components $\mathcal{Q}(^nE;F)$ are complete with respect to the topology generated by $\|\cdot\|_{\mathcal{Q}}$, then $\mathcal{Q}$ is called a \textit{$p$-Banach polynomial $(C_n)_{n=1}^\infty$-hyper-ideal}. When $p=1$ we say that $\mathcal{Q}$ is a \textit{normed (Banach) polynomial $(C_n)_{n=1}^\infty$-hyper-ideal}. If  $\mathcal{Q}$ is a $p$-normed ($p$-Banach) polynomial $(C_n)_{n=1}^\infty$-hyper-ideal for some $p \in (0,1)$, then we say that it is a {\it quasi-normed (quasi-Banach) polynomial $(C_n)_{n=1}^\infty$-hyper-ideal}. When $\mathcal{Q}$ is complete with respect to the usual sup norm we say that it is a {\it closed polynomial hyper-ideal}.

\medskip

\noindent (b) When the hyper-ideal property holds for every $n \in \mathbb{N}$, but only for $m = 1$, we say that $\cal Q$ is a {\it $p$-normed polynomial ideal} (remember that $C_1=1$). The corresponding notions of $p$-Banach polynomial ideal, normed/Banach polynomial ideal, quasi-normed/quasi-Banach polynomial ideal and closed polynomial ideal are defined in the obvious way.

\medskip

\noindent (c) We say that $\mathcal{Q}$ a \textit{$p$-normed polynomial $(C_n,K_n)_{n=1}^\infty$-two-sided ideal} if it satisfies the\\
{\bf Two-sided ideal property:} For $n,m, r \in \mathbb{N}$, and Banach spaces $E$, $F$, $G$ and $H$, if  $P\in\mathcal{Q}(^nE;F)$, $Q\in\mathcal{P}(^mG;E)$ and $R\in\mathcal{P}(^rF;H)$, then $R\circ P\circ Q\in \mathcal{Q}(^{rmn}G;H)$ and $$\|R\circ P\circ Q\|_{\mathcal{Q}}\le K_r\cdot C_m^{rn}\cdot\|R\|\cdot \|P\|_{\mathcal{Q}}^r\cdot\|Q\|^{rn}.$$

When $C_n=K_n=1$, for every $n\in\mathbb{N}$ we simply say that $\mathcal{Q}$ is a {\it $p$-normed polynomial two-sided ideal}. The corresponding notions of $p$-Banach polynomial $(C_n,K_n)_{n=1}^\infty$-two-sided ideal, normed/Banach polynomial $(C_n,K_n)_{n=1}^\infty$-two-sided ideal, quasi-normed/quasi-Banach polynomial $(C_n,K_n)_{n=1}^\infty$-two-sided ideal,
and closed polynomial two-sided ideal, are defined in the obvious way.\end{definition}

\begin{obs}\label{obssmi}\rm (i) The condition $C_1 = K_1 = 1$ guarantees that every (normed, quasi-normed, Banach, quasi-Banach) polynomial $(C_n,K_n)_{n=1}^\infty$-two-sided ideal is a (normed, quasi-normed, Banach, quasi-Banach) polynomial $(C_n)_{n=1}^\infty$-hyper-ideal; and that every (normed, quasi-normed, Banach, quasi-Banach) polynomial $(C_n)_{n=1}^\infty$-hyper-ideal is a (nor-med, quasi-normed, Banach, quasi-Banach) polynomial ideal. So, properties of polynomial ideals are inherited by polynomial hyper-ideals and two-sided ideals. For instance, if $\mathcal{Q}$ is a $p$-normed polynomial $(C_n,K_n)_{n=1}^\infty$-two-sided ideal (or $p$-normed polynomial $(C_m)_{m=1}^\infty$-hyper-ideal), then \begin{equation}\|\cdot\|\le\|\cdot\|_\mathcal{Q}.\label{deship}\end{equation}

(ii) Let us see that conditions $C_n,K_n\ge1$ for every $n\in\mathbb{N}$ are not restrictive. For hyper-ideals and two-sided ideals, considering the polynomial $\widehat{I_n}:\mathbb{K}\longrightarrow\mathbb{K}$ given by $\widehat{I_n}(\lambda)=\lambda^n$, we have $$1=\|\widehat{I_n}\|_\mathcal{Q}=\|\widehat{I_1}\circ\widehat{I_n}\|_\mathcal{Q}\le C_n^1\cdot\|\widehat{I_1}\|_\mathcal{Q}\cdot\|\widehat{I_n}\|=C_n.$$
For two-sided ideals, $$1=\|\widehat{I_n}\|_\mathcal{Q}=\|\widehat{I_n}\circ\widehat{I_1}\|_\mathcal{Q}\le K_n\cdot\|\widehat{I_n}\|\cdot\|\widehat{I_1}\|_\mathcal{Q}=K_n.$$\end{obs}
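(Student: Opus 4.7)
The plan is to derive each assertion by specializing the hyper-ideal or two-sided ideal inequality to convenient polynomials. For part (i), to show that a two-sided ideal is a hyper-ideal, I fix $r=1$ in the two-sided ideal inequality: then $R$ becomes a continuous linear operator $t$, and the right-hand side constant collapses to $K_1\cdot C_m^n = C_m^n$ thanks to $K_1=1$, reproducing exactly the hyper-ideal inequality. The implication hyper-ideal $\Rightarrow$ polynomial ideal is obtained similarly by fixing $m=1$: then $Q$ is linear and $C_1^n=1$, which yields the polynomial ideal inequality.

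For the norm estimate $\|\cdot\|\le\|\cdot\|_\mathcal{Q}$, I would adapt the classical operator ideal argument. Given $P\in\mathcal{Q}(^nE;F)$, $x\in E$ with $\|x\|\le 1$ and $\varphi\in F'$ with $\|\varphi\|\le 1$, introduce the linear map $a_x\colon\mathbb{K}\longrightarrow E$, $a_x(\lambda)=\lambda x$, which satisfies $\|a_x\|\le 1$. A direct computation gives $(\varphi\circ P\circ a_x)(\lambda)=\lambda^n\varphi(P(x))$, i.e.\ $\varphi\circ P\circ a_x=\varphi(P(x))\cdot\widehat{I_n}$, so condition (iii) in Definition \ref{dhip} forces $\|\varphi\circ P\circ a_x\|_\mathcal{Q}=|\varphi(P(x))|$. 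On the other hand, the polynomial ideal property (which $\mathcal{Q}$ enjoys by what has just been shown) yields $\|\varphi\circ P\circ a_x\|_\mathcal{Q}\le\|\varphi\|\cdot\|P\|_\mathcal{Q}\cdot\|a_x\|^n\le\|P\|_\mathcal{Q}$. Taking the supremum over admissible $x$ and $\varphi$ delivers $\|P\|\le\|P\|_\mathcal{Q}$.

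Part (ii) reduces to a one-line manipulation. Factoring $\widehat{I_n}=\widehat{I_1}\circ\widehat{I_n}$ and applying the hyper-ideal inequality with $t=id_\mathbb{K}$, $P=\widehat{I_1}$, $Q=\widehat{I_n}$ yields $1=\|\widehat{I_n}\|_\mathcal{Q}\le C_n$; factoring $\widehat{I_n}=\widehat{I_n}\circ\widehat{I_1}$ and applying the two-sided ideal inequality with $R=\widehat{I_n}$, $P=\widehat{I_1}$, $Q=id_\mathbb{K}$ yields $1\le K_n$. Both computations rest only on the normalization $\|\widehat{I_n}\|_\mathcal{Q}=\|\widehat{I_n}\|=1$. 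No serious obstacle is expected: the entire argument consists in specializing the defining inequalities and identifying continuous linear operators with $1$-homogeneous polynomials of the same norm; the only mild subtlety is keeping track of which factorization forces which constant to equal $1$, so as not to produce a circular dependency between parts (i) and (ii).
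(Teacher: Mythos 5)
Your proposal is correct and follows essentially the same route as the paper: part (i) by specializing $r=1$ (so $K_1 C_m^n = C_m^n$) and then $m=1$ (so $C_1^n = 1$) in the defining inequalities, and part (ii) by the same two factorizations $\widehat{I_n}=\widehat{I_1}\circ\widehat{I_n}$ and $\widehat{I_n}=\widehat{I_n}\circ\widehat{I_1}$ used in the paper. The one place you go beyond the paper is the explicit proof of $\|\cdot\|\le\|\cdot\|_\mathcal{Q}$ via $a_x$ and $\varphi$, which the paper merely states as an inherited property of polynomial ideals; your argument is the standard one and is correct, using homogeneity of the $p$-norm together with the normalization $\|\widehat{I_n}\|_\mathcal{Q}=1$. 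There is no circularity to worry about: both computations in (ii) rely on $\|\widehat{I_1}\|_\mathcal{Q}=1$ (condition (iii) of Definition \ref{dhip}) and, in the $K_n$ estimate, on the standing hypothesis $C_1=1$, both of which are part of the definition and independent of the inequalities $C_n,K_n\ge1$ being established.
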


Given $Q\in\mathcal{P}(^mE)$, $y\in F$ and $n \in \mathbb{N}$, consider the following $mn$-homogenous polynomial:
$$Q^n\otimes y \colon E \longrightarrow F~,~Q^n\otimes y(x)=Q(x)^n\cdot y.$$
A polynomial $P \in {\cal P}(^nE;F)$ is said to have {\it finite rank}, in symbols $P \in {\cal P}_{\cal F}(^nE;F)$, if the subspace of $F$ generated by the range $P(E)$ of $P$ is finite dimensional; or, equivalently, if $P$ is a finite sum of polynomials of the type $Q\otimes y$, where $Q \in {\cal P}(^nE)$ and $y \in F$.

\begin{prop}\label{postoum}Let $Q\in\mathcal{P}(^mE)$, $y\in F$ and $n \in \mathbb{N}$. If $\mathcal{Q}$ a $p$-normed polynomial $(C_n)_{n=1}^\infty$-hyper-ideal or a $p$-normed polynomial $(C_n,K_n)_{n=1}^\infty$-two-sided ideal, then $Q^n\otimes y \in {\cal Q}(^{mn}E;F)$ and $$\|Q^n\otimes y\|_\mathcal{Q} \le C_{m}^n\cdot \|Q\|^n\cdot\|y\|.$$ If $Q$ is a linear operator or $C_n = 1$ for every $n$, then $\|Q^n\otimes y\|_\mathcal{Q}= \|Q\|^n\cdot\|y\|$.\end{prop}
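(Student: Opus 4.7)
The plan is to realize $Q^n\otimes y$ as a three-factor composition of the shape demanded by the hyper-ideal property, and then simply read off the norm estimate. Introduce the linear operator $L_y\colon\mathbb{K}\longrightarrow F$, $L_y(\lambda)=\lambda y$, which satisfies $\|L_y\|=\|y\|$. A direct computation at each $x\in E$,
\[
(L_y\circ \widehat{I_n}\circ Q)(x)=L_y(Q(x)^n)=Q(x)^n\cdot y=(Q^n\otimes y)(x),
\]
gives the factorization $Q^n\otimes y=L_y\circ \widehat{I_n}\circ Q$, where $Q\in\mathcal{P}(^mE;\mathbb{K})$, $\widehat{I_n}\in\mathcal{P}(^n\mathbb{K};\mathbb{K})$, and $L_y\in\mathcal{L}(\mathbb{K};F)$.

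Next I would locate $\widehat{I_n}$ inside the component $\mathcal{Q}(^n\mathbb{K};\mathbb{K})$. Taking $\varphi=\mathrm{id}_{\mathbb{K}}\in\mathbb{K}'$, we can write $\widehat{I_n}=\varphi^n\otimes 1$, so $\widehat{I_n}$ is of finite type; by condition (i) of Definition~\ref{dhip} it lies in $\mathcal{Q}(^n\mathbb{K};\mathbb{K})$, and by condition (iii) $\|\widehat{I_n}\|_{\mathcal{Q}}=1$. Because every $p$-normed polynomial $(C_n,K_n)_{n=1}^\infty$-two-sided ideal is in particular a $p$-normed polynomial $(C_n)_{n=1}^\infty$-hyper-ideal (Remark~\ref{obssmi}(i)), in both hypotheses I may apply the hyper-ideal property, with $t=L_y$, $P=\widehat{I_n}$, and the given $Q$ as the inner polynomial, to obtain $Q^n\otimes y\in\mathcal{Q}(^{mn}E;F)$ together with
\[
\|Q^n\otimes y\|_{\mathcal{Q}}\le C_m^n\cdot \|L_y\|\cdot\|\widehat{I_n}\|_{\mathcal{Q}}\cdot\|Q\|^n=C_m^n\cdot\|Q\|^n\cdot\|y\|,
\]
which is precisely the claimed inequality.

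For the equality assertion, note that when $Q$ is a linear operator one has $m=1$, hence $C_m=C_1=1$; and of course $C_m=1$ also when $C_n=1$ for every $n$. In either case the bound just obtained reduces to $\|Q^n\otimes y\|_{\mathcal{Q}}\le \|Q\|^n\cdot\|y\|$. The reverse inequality follows from the universal estimate $\|\cdot\|\le\|\cdot\|_{\mathcal{Q}}$ recorded in (\ref{deship}) together with the elementary sup-norm computation
\[
\|Q^n\otimes y\|=\sup_{\|x\|\le 1}|Q(x)|^n\cdot\|y\|=\|Q\|^n\cdot\|y\|,
\]
so $\|Q\|^n\cdot\|y\|\le\|Q^n\otimes y\|_{\mathcal{Q}}\le\|Q\|^n\cdot\|y\|$, establishing equality. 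I do not anticipate any genuine obstacle here; the only thing to notice is the factorization $Q^n\otimes y=L_y\circ \widehat{I_n}\circ Q$, which reduces the proof to a single invocation of the hyper-ideal inequality plus the normalization $\|\widehat{I_n}\|_{\mathcal{Q}}=1$ built into Definition~\ref{dhip}.
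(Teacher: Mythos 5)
Your proof is correct and follows essentially the same route as the paper: the same factorization $Q^n\otimes y=(1\otimes y)\circ\widehat{I_n}\circ Q$ (your $L_y$ is the paper's $1\otimes y$), the same single application of the hyper-ideal inequality with $\|\widehat{I_n}\|_{\mathcal{Q}}=1$, and the same use of $\|\cdot\|\le\|\cdot\|_{\mathcal{Q}}$ for the equality case. The only cosmetic difference is that you spell out why $\widehat{I_n}$ is of finite type, whereas the paper takes condition (iii) of Definition~\ref{dhip} directly.
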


\begin{proof} Let us treat the hyper-ideal case (the two-sided ideal case is analogous). By $1 \otimes y$ we mean the operator $\lambda \in \mathbb{K} \mapsto \lambda y \in F$. Writing $Q^n\otimes y=(1\otimes y)\circ \widehat{I_n}\circ Q,$ the hyper-ideal property gives  $Q^n\otimes y \in {\cal Q}(^{mn}E;F)$ and $$\|Q^m\otimes y\|_{\cal Q}\le C_{m}^n\cdot \|1\otimes y\|\cdot \|\widehat{I_n}\|_{\mathcal{Q}}\cdot\|Q\|^n=C_m^n\cdot\|1\otimes y\|\cdot\|Q\|^n = C_m^m \cdot \|Q\|^n \cdot \|y\|.$$
For the second assertion, recall that $C_1 = 1$ and combine the inequality we have just proved with
$$\|Q\|^n\cdot\|y\|= \|Q^n\otimes y\| \leq \|Q^m\otimes y\|_{\cal Q}.$$\end{proof}

The following general properties of hyper-ideals and two-sided ideals of polynomials can also be proved by standard arguments (see, e.g. \cite{ewerton}).

\begin{prop}\label{fhip} Given a polynomial hyper-ideal (resp. two sided-ideal) $\mathcal{Q}$, define $$\overline{\mathcal{Q}}(^nE;F):=\overline{\mathcal{Q}(^nE;F)}^{\|\cdot\|},$$ for all $n\in \mathbb{N}$ and Banach spaces $E,F$.
Then $\overline{\mathcal{Q}}$ is the smallest closed polynomial hyper-ideal (resp. two sided-ideal) containing $\mathcal{Q}$.\end{prop}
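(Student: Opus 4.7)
The plan is to equip $\overline{\mathcal{Q}}$ with the usual sup norm $\|\cdot\|$, verify the axioms of Definition \ref{dhip} for a closed polynomial hyper-ideal (resp.\ two-sided ideal) with all constants equal to $1$, and then establish minimality by a one-line closure argument. The basic prerequisites (i)--(iii) are immediate: each $\overline{\mathcal{Q}}(^nE;F)$, being the sup-norm closure of a linear subspace of $\mathcal{P}(^nE;F)$ containing the $n$-homogeneous polynomials of finite type, is a closed linear subspace with the same property; completeness is automatic; and $\|\widehat{I_n}\|=1$.

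For the hyper-ideal property, given $P \in \overline{\mathcal{Q}}(^nE;F)$, I would choose $(P_k) \subset \mathcal{Q}(^nE;F)$ with $\|P_k - P\| \to 0$. For arbitrary $Q \in \mathcal{P}(^mG;E)$ and $t \in \mathcal{L}(F;H)$, the hyper-ideal property of $\mathcal{Q}$ gives $t \circ P_k \circ Q \in \mathcal{Q}(^{mn}G;H)$, and the elementary bound
\[
\|t \circ P_k \circ Q - t \circ P \circ Q\| \;\le\; \|t\|\cdot \|P_k - P\|\cdot \|Q\|^n
\]
places the limit $t \circ P \circ Q$ in $\overline{\mathcal{Q}}(^{mn}G;H)$ and yields $\|t \circ P \circ Q\| \le \|t\|\cdot \|P\|\cdot \|Q\|^n \le C_m^n \|t\|\cdot \|P\|\cdot \|Q\|^n$, since $C_m\ge 1$.

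The main obstacle is the two-sided ideal case, because left composition by an $r$-homogeneous polynomial $R$ is nonlinear in its argument and one cannot directly estimate $\|R \circ (P_k - P) \circ Q\|$. I would resolve this by the standard multilinear telescoping: letting $\check R$ be the symmetric $r$-linear form associated to $R$, one has
\[
R(a) - R(b) \;=\; \sum_{i=1}^{r} \check R\bigl(\underbrace{a,\dots,a}_{i-1},\, a-b,\, \underbrace{b,\dots,b}_{r-i}\bigr),
\]
which yields $\|R(a) - R(b)\| \le r\, \|\check R\|\cdot \|a-b\|\cdot \max(\|a\|,\|b\|)^{r-1}$. Applying this with $a = P_k(Q(x))$, $b = P(Q(x))$ and taking the sup over $\|x\|\le 1$ gives
\[
\|R\circ P_k \circ Q - R\circ P\circ Q\| \;\le\; r\, \|\check R\|\cdot \|Q\|^{rn}\cdot \|P_k - P\|\cdot \max(\|P_k\|,\|P\|)^{r-1},
\]
which tends to $0$ because $\|P_k\|\to \|P\|$. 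Since $R \circ P_k \circ Q \in \mathcal{Q}$ by the two-sided ideal property of $\mathcal{Q}$, the limit lies in $\overline{\mathcal{Q}}$, and the routine estimate $\|R \circ P \circ Q\| \le \|R\|\cdot \|P\|^r\cdot \|Q\|^{rn}$ absorbs into the required bound using $C_m, K_r \ge 1$.

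Finally, minimality is immediate: any closed polynomial hyper-ideal (or two-sided ideal) $\mathcal{R} \supseteq \mathcal{Q}$ has $\|\cdot\|$-complete, hence $\|\cdot\|$-closed, components, so $\overline{\mathcal{Q}}(^nE;F) = \overline{\mathcal{Q}(^nE;F)}^{\|\cdot\|} \subseteq \mathcal{R}(^nE;F)$ for all $n, E, F$.
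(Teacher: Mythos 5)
Your proof is correct and is exactly the ``standard argument'' the paper alludes to without writing out (it only cites \cite{ewerton}, which treats the multilinear hyper-ideal case where the left composition is linear). The one genuinely new point for the two-sided case is that $P \mapsto R \circ P$ is nonlinear, and your multilinear telescoping identity, giving the local Lipschitz bound $\|R\circ P_k\circ Q - R\circ P\circ Q\| \le r\,\|\check R\|\,\|Q\|^{rn}\,\|P_k-P\|\,\max(\|P_k\|,\|P\|)^{r-1}$, is precisely the right tool; the rest (sup-norm completeness, the norm estimate with $C_n=K_n=1$, and minimality from closedness of complete subspaces) is routine.
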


\begin{teo}[Series criterion]\label{cs} Let $0<p\le 1$  and $\mathcal{Q}$ be a class of polynomials satisfying the following two conditions:\\
\indent {\rm (i)} $\widehat{I_n}\in\mathcal{Q}(^n\mathbb{K};\mathbb{K})$ and $\|\widehat{I_n}\|_{\mathcal{Q}}=1$ for every $n \in \mathbb{N}$.\\
\indent {\rm (ii)} If $(P_j)_{j=1}^\infty\subseteq\mathcal{Q}(^nE;F)$ is such that $\sum\limits_{j=1}^\infty\|P_j\|_\mathcal{Q}^{p}<\infty$, then $$P:=\sum\limits_{j=1}^\infty P_j\in\mathcal{Q}(^nE;F)\ \mbox{and}\ \|P\|_\mathcal{Q}^{p}\le\sum\limits_{j=1}^\infty \|P_j\|_\mathcal{Q}^{p}.$$
{\rm(a)} $\mathcal{Q}$ is a $p$-Banach polynomial $(C_n)_{n=1}^\infty$-hyper-ideal if and only if the hyper-ideal property (cf. Definition \ref{dhip}(a)) holds.\\
{\rm (b)} $\mathcal{Q}$ is a $p$-Banach polynomial $(C_n,K_n)_{n=1}^\infty$-two-sided ideal if and only if the two-sided ideal property (cf. Definition \ref{dhip}(c)) holds.
\end{teo}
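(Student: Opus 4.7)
The $(\Rightarrow)$ direction in both (a) and (b) is immediate from Definition \ref{dhip}: being a $p$-Banach polynomial $(C_n)_{n=1}^\infty$-hyper-ideal (respectively, $(C_n,K_n)_{n=1}^\infty$-two-sided ideal) includes the hyper-ideal (resp.\ two-sided ideal) property as part of its definition. The substance lies in $(\Leftarrow)$; I describe the plan for (a), and part (b) will follow by the same scheme.

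Assume (i), (ii) and the hyper-ideal property. Fixing $n \in \mathbb{N}$ and Banach spaces $E,F$, the plan is to verify that the component $\mathcal{Q}(^nE;F)$ is a $p$-Banach space under $\|\cdot\|_\mathcal{Q}$ containing all $n$-homogeneous polynomials of finite type. The argument rests on three direct applications of the hyper-ideal property. First, for $\varphi \in E'$ and $y \in F$, the factorization $\varphi^n \otimes y = (1 \otimes y) \circ \widehat{I_n} \circ \varphi$ (with $\varphi$ viewed as a $1$-homogeneous polynomial and $1 \otimes y \in \mathcal{L}(\mathbb{K};F)$) places $\varphi^n \otimes y$ in $\mathcal{Q}(^nE;F)$. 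Second, for $\alpha \in \mathbb{K}$ and $P \in \mathcal{Q}(^nE;F)$, writing $\alpha P = (\alpha \cdot id_F) \circ P \circ id_E$ yields $\alpha P \in \mathcal{Q}(^nE;F)$ with $\|\alpha P\|_\mathcal{Q} = |\alpha|\cdot\|P\|_\mathcal{Q}$ (reverse inequality via $\alpha^{-1}$ when $\alpha \neq 0$, while $\alpha = 0$ places the zero polynomial in the component). Third, for $P \in \mathcal{Q}(^nE;F)$, $x \in E$ and $\varphi \in F'$, the identity $\varphi \circ P \circ (1 \otimes x) = \varphi(P(x)) \cdot \widehat{I_n}$ combined with the hyper-ideal property (both outer factors linear, so $C_1^n = 1$) gives $|\varphi(P(x))| \le \|\varphi\|\cdot\|P\|_\mathcal{Q}\cdot\|x\|^n$, hence $\|\cdot\| \le \|\cdot\|_\mathcal{Q}$. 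Combining the first two factorizations with (ii) applied to sequences $(P,Q,0,0,\ldots)$ shows that $\mathcal{Q}(^nE;F)$ is a linear subspace containing all finite-type polynomials, together with $p$-subadditivity $\|P+Q\|_\mathcal{Q}^p \le \|P\|_\mathcal{Q}^p + \|Q\|_\mathcal{Q}^p$; the third factorization delivers definiteness of $\|\cdot\|_\mathcal{Q}$; and (ii) is exactly the series criterion characterizing $p$-Banach completeness. All items of Definition \ref{dhip}(a) are verified, with the hyper-ideal property itself being hypothesized.

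Part (b) follows by the same scheme: the two-sided ideal property with $r=1$ and $R = t \in \mathcal{L}(F;H)$ reduces (using $K_1 = 1$) to the hyper-ideal property, so (a) yields the underlying $p$-Banach hyper-ideal structure, and the full two-sided inequality is part of the hypothesis. The only genuinely non-routine step in the whole argument is the definiteness trick — the third factorization above — where the hyper-ideal property is applied with linear operators on \emph{both} sides so that $|\varphi(P(x))|$ becomes the $\|\cdot\|_\mathcal{Q}$-norm of a scalar multiple of $\widehat{I_n}$; condition (i) then converts the hyper-ideal inequality into the domination $\|P\| \le \|P\|_\mathcal{Q}$. Everything else is bookkeeping on top of the standard series characterization of $p$-completeness.
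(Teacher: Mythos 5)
Your proof is correct. The paper provides no explicit argument for Theorem~\ref{cs} --- it defers to ``standard arguments'' and to the multilinear analogue in \cite{ewerton} --- and your reconstruction is exactly that standard argument: the forward implication is definitional, while the converse uses the hyper-ideal (resp.\ two-sided ideal) property on rank-one compositions ($\varphi^n\otimes y=(1\otimes y)\circ\widehat{I_n}\circ\varphi$, $\alpha P=(\alpha\,id_F)\circ P\circ id_E$, and $\varphi\circ P\circ(1\otimes x)=\varphi(P(x))\,\widehat{I_n}$) to obtain containment of finite-type polynomials, scalar homogeneity, and the domination $\|\cdot\|\le\|\cdot\|_{\mathcal Q}$, with condition (ii) then supplying $p$-subadditivity, linear closure, and $p$-completeness via the usual series criterion, and part (b) reducing to part (a) by taking $r=1$, $K_1=1$.
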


\begin{prop}\label{phipdf}Let $0 < p,q \leq 1$, $\mathcal{Q}$ be a $p$-Banach polynomial $(C_n)_{n=1}^\infty$-hyper-ideal ($(C_n,K_n)_{n=1}^\infty$-two-sided ideal, resp.) and $\mathcal{R}$ be a $q$-Banach polynomial $(D_n)_{n=1}^\infty$-hyper-ideal ($(D_n,L_n)_{n=1}^\infty$-two-sided ideal, resp.). If $\mathcal{Q}\subseteq\mathcal{R}$, then there are positive numbers $(M_n)_{n=1}^\infty$ such that $\mathcal{Q}\stackrel{(M_n)_{n}}{\longhookrightarrow}\mathcal{R}$.\end{prop}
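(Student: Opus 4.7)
My plan is to argue by contradiction and, since every two-sided ideal is a hyper-ideal by Remark \ref{obssmi}(i), to reduce everything to the hyper-ideal case. Fix $n \in \mathbb{N}$ and suppose for contradiction that no single constant $M_n$ dominates $\|P\|_\mathcal{R}/\|P\|_\mathcal{Q}$ uniformly over all Banach spaces $E, F$ and all nonzero $P \in \mathcal{Q}(^nE;F)$. Then I can pick Banach spaces $E_k, F_k$ and polynomials $P_k \in \mathcal{Q}(^nE_k;F_k)$ with $\|P_k\|_\mathcal{Q} = 1$ and $\|P_k\|_\mathcal{R} > k \cdot 2^{k/p}$ for every $k \in \mathbb{N}$.

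To collapse this sequence of counterexamples into a single polynomial between a single pair of spaces, I will form the $\ell_\infty$-direct sums $E := \ell_\infty\bigl(\bigoplus_{k} E_k\bigr)$ and $F := \ell_\infty\bigl(\bigoplus_{k} F_k\bigr)$, with the canonical norm-one projections $\pi_k \colon E \to E_k$, $p_k \colon F \to F_k$ and norm-one inclusions $j_k \colon E_k \to E$, $\iota_k \colon F_k \to F$. Setting $P_k' := \iota_k \circ P_k \circ \pi_k$, the hyper-ideal property of $\mathcal{Q}$ (the inner map is linear, so the constant is $C_1^n = 1$) gives $P_k' \in \mathcal{Q}(^nE;F)$ with $\|P_k'\|_\mathcal{Q} \le 1$. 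Since $\sum_k \|2^{-k/p} P_k'\|_\mathcal{Q}^{p} \le \sum_k 2^{-k} < \infty$, the series criterion (Theorem \ref{cs}) yields
$$P := \sum_{k=1}^\infty 2^{-k/p}\, P_k' \in \mathcal{Q}(^nE;F),$$
and the hypothesis $\mathcal{Q}\subseteq\mathcal{R}$ then gives $P \in \mathcal{R}(^nE;F)$, so $\|P\|_\mathcal{R}$ is a finite nonnegative real.

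The crucial step is to extract each $P_k$ back out of $P$. Using the relations $p_k \circ \iota_j = \delta_{kj}\,\mathrm{id}_{F_k}$, $\pi_j \circ j_k = \delta_{jk}\,\mathrm{id}_{E_k}$, and the fact that each $P_j$ kills zero (being $n$-homogeneous with $n \ge 1$), a pointwise calculation shows $p_k \circ P \circ j_k = 2^{-k/p} P_k$. Applying the hyper-ideal property of $\mathcal{R}$ to this composition (again the inner map is linear and $\|p_k\|,\|j_k\|\le 1$) gives
$$2^{-k/p}\,\|P_k\|_\mathcal{R} \;=\; \|p_k \circ P \circ j_k\|_\mathcal{R} \;\le\; \|P\|_\mathcal{R},$$
hence $\|P_k\|_\mathcal{R} \le 2^{k/p}\,\|P\|_\mathcal{R}$ for every $k$, which contradicts the choice $\|P_k\|_\mathcal{R} > k \cdot 2^{k/p}$ as soon as $k > \|P\|_\mathcal{R}$.

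The routine starting point would be a closed-graph argument applied to the inclusion $\mathcal{Q}(^nE;F) \hookrightarrow \mathcal{R}(^nE;F)$ for each fixed pair $(E,F)$: both spaces are complete under their $p$- and $q$-norm and both continuously embed into $(\mathcal{P}(^nE;F),\|\cdot\|)$ thanks to \eqref{deship}, so the inclusion has closed graph and is continuous with some constant $M_n(E,F)$. The hard part, which the $\ell_\infty$-sum plus series-criterion construction above is engineered to handle, is upgrading this to a constant $M_n$ that is \emph{uniform} in $E$ and $F$ by packaging arbitrarily many pathological examples into a single member of $\mathcal{Q}$.
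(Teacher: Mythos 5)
Your proof is correct and uses the standard technique for this kind of uniformity result (the $\ell_\infty$-direct-sum ``collapsing'' argument going back to Pietsch's proof of the analogous fact for operator ideals, and used in \cite{ewerton} for multilinear hyper-ideals, which is the reference the paper points to). The key steps all check out: $C_1 = D_1 = 1$ makes all the hyper-ideal estimates clean since you only ever compose with linear operators $\pi_k, j_k, p_k, \iota_k$; completeness of the $p$-normed component $\mathcal{Q}(^nE;F)$ is exactly what makes $\sum_k 2^{-k/p} P_k'$ converge there; the inequality $\|\cdot\| \le \|\cdot\|_{\mathcal{R}}$ from \eqref{deship} ensures the $\mathcal{R}$-limit and the pointwise series agree, so the identity $p_k \circ P \circ j_k = 2^{-k/p} P_k$ holds; and the final estimate $\|P_k\|_{\mathcal{R}} \le 2^{k/p}\|P\|_{\mathcal{R}}$ contradicts the blow-up for $k > \|P\|_{\mathcal{R}}$. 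One cosmetic remark: you don't literally need Theorem \ref{cs}(ii) as stated, only the elementary fact that in a complete $p$-normed space every $p$-absolutely summable series converges (which is equivalent to completeness), and you don't need the closing closed-graph paragraph at all, since the contradiction argument already establishes the uniform bound directly; but these are presentation points, not gaps.
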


\section{Distinguished examples}
\label{sec:mfatat}
This section has a twofold purpose: (i) to give illustrative examples of polynomial hyper-ideals and of polynomial two-sided-ideals; (ii) to give interesting examples of polynomial ideals that fail to be hyper-ideals and interesting examples of polynomial hyper-ideals that fail to be two-sided ideals. The main idea is to make clear that it is worth studying the two new classes, namely, polynomial hyper-ideals and polynomial two-sided-ideals.

\begin{ex}\label{ex1p}\rm The polynomial ideal ${\cal P}_f$ of finite type polynomials is not a hyper-ideal (hence it is not a two-sided-ideal). Indeed, it is well known that the 2-homogeneous polynomial
$$P \colon \ell_2\longrightarrow\mathbb{K}~,~P(x)=\sum\limits_{j=1}^\infty x_j^2,$$  is not of finite type (see, e.g. \cite[Example 3.1]{ewerton}). Then $\mathcal{P}_f$ cannot be a polynomial hyper-ideal, because otherwise we would have $P=id_{\mathbb{K}}\circ P\in\mathcal{P}_f(^2 \ell_2)$.
\end{ex}

Once the polynomial ideal of finite type polynomials is out of the game, the class of finite rank polynomials is the natural candidate to be the smallest polynomial hyper-ideal/two-sided ideal. A class of polynomials to which no specific norm has been assigned is supposed to be endowed with the usual sup norm.

\begin{teo}\label{postofinmenor} The class $\mathcal{P}_\mathcal{F}$ of finite rank homogenous polynomials is the smallest polynomial hyper-ideal and the smallest polynomial two-sided ideal, meaning that $\mathcal{P}_\mathcal{F}$ is a polynomial two-sided ideal (hence a polynomial hyper-ideal) and if $\mathcal{Q}$ is a polynomial hyper-ideal or a polynomial two-sided ideal, then $\mathcal{P}_\mathcal{F}\subseteq\mathcal{Q}$.\end{teo}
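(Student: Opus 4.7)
My plan splits into two parts: (I) show $\mathcal{P}_\mathcal{F}$ is itself a polynomial two-sided ideal, which by Remark \ref{obssmi}(i) automatically makes it a polynomial hyper-ideal; and (II) show every polynomial hyper-ideal $\mathcal{Q}$ contains $\mathcal{P}_\mathcal{F}$, which by Remark \ref{obssmi}(i) also covers the two-sided case.

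For (I), I would first verify the axiomatic data of Definition \ref{dhip}: each component $\mathcal{P}_\mathcal{F}(^nE;F)$ is a linear subspace of $\mathcal{P}(^nE;F)$ (the sum of two finite rank polynomials clearly has finite rank) and contains the finite type polynomials (each $\varphi^n\otimes y$ has one-dimensional range); the polynomials $\widehat{I_n}$ have rank one with sup norm equal to one; and the sup norm is a norm on each component. For the two-sided ideal property with $C_m=K_r=1$, the inequality $\|R\circ P\circ Q\|\le\|R\|\cdot\|P\|^r\cdot\|Q\|^{rn}$ is a one-line application of the homogeneity estimate $\|S(x)\|\le\|S\|\cdot\|x\|^{\deg S}$ iterated three times.

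The one genuine step for (I) is showing that $R\circ P\circ Q$ has finite rank when $P$ does. Since $P\in\mathcal{P}_\mathcal{F}(^nE;F)$, its range lies in some finite-dimensional subspace $M=\mathrm{span}\{e_1,\ldots,e_d\}\subseteq F$, so the range of $R\circ P\circ Q$ lies in $R(M)$. Using the symmetric $r$-linear form $\check R$ associated to $R$ and expanding
$$R\Bigl(\sum_{i=1}^d\lambda_i e_i\Bigr)=\sum_{1\le i_1,\ldots,i_r\le d}\lambda_{i_1}\cdots\lambda_{i_r}\,\check R(e_{i_1},\ldots,e_{i_r}),$$
one sees that $R(M)\subseteq\mathrm{span}\{\check R(e_{i_1},\ldots,e_{i_r}):1\le i_1,\ldots,i_r\le d\}$, a finite-dimensional subspace of $H$. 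Hence $R\circ P\circ Q\in\mathcal{P}_\mathcal{F}(^{rmn}G;H)$.

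For (II), let $\mathcal{Q}$ be any polynomial hyper-ideal. Since $\mathcal{P}_\mathcal{F}(^nE;F)$ is, by definition, the linear span of the polynomials of the form $Q\otimes y$ with $Q\in\mathcal{P}(^nE)$ and $y\in F$, and $\mathcal{Q}(^nE;F)$ is a linear subspace, it suffices to place each such $Q\otimes y$ in $\mathcal{Q}(^nE;F)$. The identity $\widehat{I_1}$ on $\mathbb{K}$ is a finite type polynomial, hence lies in $\mathcal{Q}(^1\mathbb{K};\mathbb{K})$ by axiom (i) of Definition \ref{dhip}. Writing $Q\otimes y=(1\otimes y)\circ\widehat{I_1}\circ Q$, where $1\otimes y\in\mathcal{L}(\mathbb{K};F)$ and $Q\in\mathcal{P}(^nE;\mathbb{K})$, the hyper-ideal property immediately forces $Q\otimes y\in\mathcal{Q}(^nE;F)$. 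The only mildly delicate point in the whole argument is the finite-rank-of-composition step above, which is precisely where the polynomial setting differs from the linear one; the minimality half collapses to the single factorization $(1\otimes y)\circ\widehat{I_1}\circ Q=Q\otimes y$.
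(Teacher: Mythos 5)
Your proposal is correct and follows essentially the same route as the paper. The only differences are expository: for the minimality half you reprove the $n=1$ special case of Proposition \ref{postoum} directly via the factorization $Q\otimes y=(1\otimes y)\circ\widehat{I_1}\circ Q$ (the paper simply invokes that proposition), and for the closure-under-composition step you spell out, via the expansion of $R$ through its symmetric $r$-linear form $\check R$ on a basis of $M=\mathrm{span}\,P(E)$, the fact that the paper states without proof, namely that a homogeneous polynomial on a finite-dimensional space has finite rank.
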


\begin{proof}Since ${\cal P}_{\cal F}$ is a polynomial ideal, to prove that $\mathcal{P}_\mathcal{F}$ is a polynomial two-sided ideal (in particular a hyper-ideal), we just need check that $R\circ P\circ Q\in\mathcal{P}_\mathcal{F}(^{mnr}G;H)$ whenever $P\in\mathcal{P}_\mathcal{F}(^nE;F)$, $Q\in \mathcal{P}(^mG;E)$ and $R\in\mathcal{P}(^rF;H)$. It is clear that
$${\rm span}\{R \circ P \circ Q(H)\} \subseteq {\rm span}\{R \circ P(E)\} \subseteq {\rm span}\{R({\rm span}\{P(E)\})\}. $$
Since ${\rm span}\{P(E)\}$ is finite-dimensional and homogeneous polynomials on finite-dimensional spaces have finite rank, it follows that $R \circ P \circ Q$ has finite rank. From Proposition \ref{postoum} we know that $\cal Q$ contains all rank 1 homogeneous polynomials, that is, polynomials of the type $Q \otimes y$, where $Q$ is a scalar-valued homogeneous polynomial. Any finite rank polynomial is a finite sum of rank 1 polynomials, hence ${\cal P}_{\cal F} \subseteq {\cal Q}$.\end{proof}

In particular, in the definitions of polynomial hyper-ideal and polynomial two-sided ideal, the containment of the finite type polynomials is equivalent to the containment of the finite rank polynomials:

\begin{cor}\label{coreqhi}Let $\mathcal{Q}$ be a class of continuous homogenous polynomials fulfilling the hyper-ideal property (cf. Definition \ref{dhip}(a)) or the two-sided ideal property (cf. Definition \ref{dhip}(c)) such that each component $\mathcal{Q}(^nE;F)$ is a linear subspace of $\mathcal{P}(^nE;F)$. Then ${\cal P}_f \subseteq \cal Q$ if and only if ${\cal P}_{\cal F} \subseteq \cal Q$.\end{cor}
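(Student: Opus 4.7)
The plan is to do both directions, observing that one is essentially trivial while the other is the substantive content of the corollary.

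\emph{Easy direction.} First I would observe that $\mathcal{P}_f\subseteq\mathcal{P}_\mathcal{F}$ as classes of polynomials: any finite type polynomial $\sum_{i=1}^k\varphi_i^n\otimes y_i$ is by definition a finite sum of rank-one polynomials, so it has finite rank. Hence if $\mathcal{P}_\mathcal{F}\subseteq\mathcal{Q}$, then automatically $\mathcal{P}_f\subseteq\mathcal{Q}$. No use of the hyper-ideal/two-sided ideal property is needed here.

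\emph{Hard direction.} Now assume $\mathcal{P}_f\subseteq\mathcal{Q}$, and let $P\in\mathcal{P}_\mathcal{F}(^nE;F)$. Since every finite rank $n$-homogeneous polynomial is a finite sum $P=\sum_{i=1}^kQ_i\otimes y_i$ with $Q_i\in\mathcal{P}(^nE)$ scalar-valued and $y_i\in F$, and since each component $\mathcal{Q}(^nE;F)$ is a linear subspace, it suffices to prove that $Q\otimes y\in\mathcal{Q}(^nE;F)$ for every scalar $Q\in\mathcal{P}(^nE)$ and every $y\in F$. The subtlety is that $Q$ itself is arbitrary, so we cannot place it as the ``middle'' polynomial in any composition covered by the hyper-ideal property.

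\emph{Key trick.} The way around this is to sandwich the identity $\widehat{I_1}\colon\mathbb{K}\to\mathbb{K}$, $\widehat{I_1}(\lambda)=\lambda$, in the middle. This polynomial is of finite type, since $\widehat{I_1}=id_\mathbb{K}\otimes 1$ with $id_\mathbb{K}\in\mathbb{K}'$ and $1\in\mathbb{K}$, so by hypothesis $\widehat{I_1}\in\mathcal{P}_f\subseteq\mathcal{Q}$. I would then check the factorization
\[
Q\otimes y\;=\;(1\otimes y)\circ\widehat{I_1}\circ Q,
\]
which is immediate: evaluated at $x\in E$ the right-hand side is $(1\otimes y)(\widehat{I_1}(Q(x)))=(1\otimes y)(Q(x))=Q(x)y$. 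Now the hyper-ideal property (cf. Definition \ref{dhip}(a)) applies with outer linear operator $t=1\otimes y\in\mathcal{L}(\mathbb{K};F)$, middle $\widehat{I_1}\in\mathcal{Q}(^1\mathbb{K};\mathbb{K})$, and inner polynomial $Q\in\mathcal{P}(^nE;\mathbb{K})$, yielding $Q\otimes y\in\mathcal{Q}(^nE;F)$. The two-sided ideal case is covered as well because, by Remark \ref{obssmi}(i), every two-sided ideal is a hyper-ideal.

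The only genuine obstacle is conceptual: recognizing that one should not try to factor $Q\otimes y$ using $Q$ itself as the ``middle'' polynomial (which would require $Q\in\mathcal{Q}$ and thus beg the question), but rather use the finite-type polynomial $\widehat{I_1}$ as the middle piece and let the arbitrary $Q$ occupy the ``inner'' slot where the hyper-ideal property permits any continuous homogeneous polynomial. Once this factorization is spotted, the rest is a one-line application of the defining property together with the linearity of the components.
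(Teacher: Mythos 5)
Your proof is correct and takes essentially the same route as the paper. The factorization $Q\otimes y=(1\otimes y)\circ\widehat{I_1}\circ Q$ is exactly the $n=1$ instance of the factorization $Q^n\otimes y=(1\otimes y)\circ\widehat{I_n}\circ Q$ used in the paper's Proposition~\ref{postoum}, whose application in Theorem~\ref{postofinmenor} (together with linearity of the components and the trivial inclusion $\mathcal{P}_f\subseteq\mathcal{P}_\mathcal{F}$) is precisely what the paper invokes to deduce the corollary; you have merely been more explicit in noting that $\widehat{I_1}\in\mathcal{Q}$ is obtained from the hypothesis $\mathcal{P}_f\subseteq\mathcal{Q}$ rather than from the normalization condition of Definition~\ref{dhip}, which is the right reading of the weaker assumptions in the corollary.
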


\begin{ex}\label{exapp} \rm It is not difficult to check that the $P$ from Example \ref{ex1p} is cannot be approximated by finite type polynomials (cf. \cite[Example 3.4]{ewerton}). By Theorem \ref{postofinmenor} it follows that the class of polynomials that can be approximated, in the uniform norm, by finite type polynomials is not a polynomial hyper-ideal.\end{ex}

Later in this section we shall identify the smallest Banach polynomial hyper/two-sided ideal. Now we can exhibit the smallest closed polynomial hyper/two-sided ideal:

\begin{ex}\rm Combining Proposition \ref{fhip} with Theorem \ref{postofinmenor}, we conclude that the class $\overline{\mathcal{P}_\mathcal{F}}$ of polynomials that can be approximated, in the uniform norm, by finite rank polynomials is the smallest closed polynomial two-sided ideal.\end{ex}

Next we show that an example of a Banach polynomial hyper-ideal that fails to be a two-side-ideal can be found within the classical classes of polynomials:

\begin{ex}\label{expsi}\rm On the one hand, the class $\mathcal{P}_\mathcal{K}$ of compact homogeneous polynomials is closed polynomial two-sided ideal. On the other hand, the class $\mathcal{P}_\mathcal{W}$ of weakly compact homogeneous polynomials is a closed polynomial hyper-ideal that fails to be a polynomial two-sided ideal.

The case of $\mathcal{P}_\mathcal{K}$ follows from the fact that continuous homogeneous polynomials send bounded sets to bounded sets and compact sets to compact sets.

That $\mathcal{P}_\mathcal{W}$ is a polynomial hyper-ideal follows from the following two facts: (i) continuous homogeneous polynomials send bounded sets to bounded sets; (ii) bounded linear operators are weak-to-weak continuous. Alternatively, see Examples \ref{exexex} and \ref{excfc}.

For $n \geq 2$, consider the $n$-homogeneous polynomial $$P\colon \ell_1\longrightarrow\ell_1~, P((\lambda_j)_{j=1}^\infty)=(\lambda_j^n)_{j=1}^\infty.$$
The polynomial $P$ is not weakly compact because the sequence $(P(e_j))_{j=1}^\infty = (e_j)_{j=1}^\infty $ has no weakly convergent subsequence in $\ell_1$. Write $P=Q\circ \iota,$ were $\iota \colon \ell_1 \hookrightarrow\ell_2$ is the formal inclusion and $Q\in\mathcal{P}(^n\ell_2;\ell_1)$ is given by the same rule as $P$. The reflexivity of $\ell_2$ gives $\iota\in\mathcal{W}(\ell_1;\ell_2)=\mathcal{P}_\mathcal{W}(^1\ell_1;\ell_2)$, so  $\mathcal{P}_\mathcal{W}$ is not a polynomial two-sided ideal, because otherwise we would have $P=Q\circ \iota\in\mathcal{P}_\mathcal{W}(^n\ell_1;\ell_1)$. Further details can be found in \cite[Example 27]{botelho1}.\end{ex}

The example above makes clear that, although the notion of polynomial two-sided ideal seems to be the {\it right} concept, the intermediate notion of hyper-ideal is also worth being studied, because otherwise the hyper-ideal property of important classes, such as the class of weakly compact polynomials, would go unnoticed.

Our next purpose is to give an explicit description of the smallest Banach polynomial hyper/two-sided ideal. First we show that the smallest Banach polynomial ideal is helpless:

\begin{ex}\rm Recall that an $n$-homogenous polynomial $P\in\mathcal{P}(^nE;F)$ is {\it nuclear} if it admits a nuclear representation
$$P(x)=\sum\limits_{j=1}^\infty \lambda_j(\varphi_j(x))^n\cdot y_j {\rm ~for~ every~} x \in E,$$
where $(\varphi_j)_{j=1}^\infty$ is a bounded sequence in $E'$,  $(\lambda_j)_{j=1}^\infty\in\ell_1$ and $(y_j)_{j=1}^\infty$ is a bounded sequence in $F$. In the space $ \mathcal{P}_\mathcal{N}(^nE;F)$ of all nuclear $n$-homogenous polynomial from $E$ to $F$ define
$$\|P\|_{\mathcal{P}_\mathcal{N}}=\inf\left\{\sum\limits_{j=1}^\infty |\lambda_j|\cdot\|\varphi_j\|^n\cdot\|y_j\|\right\},$$ where the infimum is taken over all nuclear representations of $P$. It is well known that ${\cal P}_{\cal N}$ is the smallest Banach polynomial ideal. Let us see that ${\cal P}_{\cal N}$
fails to be a polynomial hyper/two-sided ideal. In fact, considering the partial sums of a nuclear representation of a nuclear polynomial, it is not difficult to check that $\mathcal{P}_\mathcal{N}\subseteq \overline{{\cal P}_f}$. In Example \ref{exapp} we saw that $\overline{{\cal P}_f}$ does not contain ${\cal P}_{\cal F}$, so $\mathcal{P}_\mathcal{N}$ does not contain ${\cal P}_{\cal F}$ either. From Theorem \ref{postofinmenor} we conclude that $\mathcal{P}_\mathcal{N}$ does not fulfil the hyper-ideal property.\end{ex}

Once the smallest Banach polynomial ideal is out of the game, we need another class to play the role of the smallest Banach polynomial hyper/two-sided ideal. To do so we adapt for polynomial the class of hyper-nuclear multilinear operators introduced in \cite{ewerton}. Henceforth we assume $1/\infty = 0$. By $\ell_r(E)$ and $\ell_r^w(E)$ we denote the spaces os absolutely $r$-summable and weakly $r$-summable $E$-valued sequences, endowed with their usual norms ($r$-norms if $0<r < 1$) $\|\cdot\|_r$ and $\|\cdot\|_{w,r}$.

\begin{definition}\rm Let $s\in(0,\infty)$ and $r\in [1,\infty]$ be such that $1\le1/s+1/r$. An $n$-homogenous polynomial $P \in \mathcal{P}(^nE;F)$ is called:\\
(i) \textit{hyper-$(s,r)$-nuclear} if there are sequences $(\lambda_j)_{j=1}^\infty\in\ell_s$, $(P_j)_{j=1}^\infty\in\ell_r^w(\mathcal{P}(^nE))$ and $(y_j)_{j=1}^\infty\in\ell_\infty(F)$ such that \begin{equation}\label{eq:hn}P(x)=\sum\limits_{j=1}^\infty\lambda_jP_j\otimes y_j(x)=\sum\limits_{j=1}^\infty\lambda_jP_j(x) y_j,\end{equation} for every $x\in E$. In this case we write $P\in\mathcal{P}_{\mathcal{HN}_{(s,r)}}(^nE;F)$ and define $$\|P\|_{\mathcal{P}_{\mathcal{HN}_{(s,r)}}}=\inf\{\|(\lambda_j)_{j=1}^\infty\|_s\cdot \|(P_j)_{j=1}^\infty\|_{w,r}\cdot\|(y_j)_{j=1}^\infty\|_\infty\},$$ where the infimum is taken over all representations of $P$ as in (\ref{eq:hn}).\\
(ii) \textit{strong-$(s,r)$-nuclear} if there are sequences $(\lambda_j)_{j=1}^\infty\in\ell_s$, $(P_j)_{j=1}^\infty\in\ell_r(\mathcal{P}(^nE))$ and $(y_j)_{j=1}^\infty\in\ell_\infty(F)$ such that \begin{equation}\label{eq:sn}P(x)=\sum\limits_{j=1}^\infty\lambda_jP_j\otimes y_j(x)=\sum\limits_{j=1}^\infty\lambda_jP_j(x) y_j,\end{equation} for every $x\in E$. In this case we write $P\in\mathcal{P}_{\mathcal{SN}_{(s,r)}}(^nE;F)$ and define $$\|P\|_{\mathcal{P}_{\mathcal{SN}_{(s,r)}}}=\inf\{\|(\lambda_j)_{j=1}^\infty\|_s\cdot \|(P_j)_{j=1}^\infty\|_{r}\cdot\|(y_j)_{j=1}^\infty\|_\infty\},$$ where the infimum is taken over all representations of $P$ as in (\ref{eq:sn}).\end{definition}

It is plain that $\mathcal{P}_{\mathcal{SN}_{(s,r)}}\longhookrightarrow\mathcal{P}_{\mathcal{HN}_{(s,r)}}$. When $s=1$ and $r=\infty$ we simply write $\mathcal{P}_{\mathcal{HN}}$ and its elements are called hyper-nuclear polynomials.

\begin{teo}Let $s\in(0,\infty)$, $r\in [1,\infty]$ be such that $1\le1/s+1/r$ and $p$ be such that $\frac{1}{p}=\frac{1}{s}+\frac{1}{r}$.\\
{\rm (a)} The class $\mathcal{P}_{\mathcal{HN}_{(s,r)}}$ of hyper-$(s,r)$-nuclear homogenous polynomials is a $p$-Banach hyper-ideal.\\
{\rm (b)} The class $\mathcal{P}_{\mathcal{SN}_{(s,r)}}$ of strong-$(s,r)$-nuclear homogenous polynomials is a $p$-Banach $\ap1,\frac{n^n}{n!}\fp_{n=1}^\infty$-two-sided ideal. \end{teo}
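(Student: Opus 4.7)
The plan is to invoke the Series Criterion (Theorem~\ref{cs}) in each case, which reduces the task to (i) checking that $\|\widehat{I_n}\|_\mathcal{Q}=1$, (ii) verifying the $p$-summable series condition, and (iii) establishing the appropriate composition property. Condition (i) is immediate from the one-term representation $\widehat{I_n}(\lambda)=1\cdot\widehat{I_n}(\lambda)\cdot 1$ combined with \eqref{deship}. For (ii), given $(P_j)\subseteq\mathcal{Q}(^nE;F)$ with $\sum_j\|P_j\|_\mathcal{Q}^p<\infty$, I would fix $\varepsilon>0$, pick for each $j$ a near-optimal representation $P_j(x)=\sum_k\lambda_{jk}Q_{jk}(x)y_{jk}$, and rescale the three factors (with product of scaling constants equal to $1$, so $P_j$ is preserved) so that $\|(\lambda_{jk})_k\|_s=t_j^{p/s}$, $\|(Q_{jk})_k\|_{w,r}$ or $\|(Q_{jk})_k\|_r=t_j^{p/r}$, and $\|(y_{jk})_k\|_\infty=1$, where $t_j\le(1+\varepsilon)\|P_j\|_\mathcal{Q}$. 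Concatenating the doubly indexed sequences into a single representation of $P=\sum_j P_j$ and using $p/s+p/r=1$, a Fubini-type argument (invoking $\sup_\phi\sum_j f_j(\phi)\le\sum_j\sup_\phi f_j(\phi)$ in the weak case) yields factor-norms with product at most $(1+\varepsilon)(\sum_j\|P_j\|_\mathcal{Q}^p)^{1/p}$. Letting $\varepsilon\to 0$ gives the $p$-norm inequality; convergence of $\sum_jP_j$ in $\|\cdot\|$ follows from $\|\cdot\|\le\|\cdot\|_\mathcal{Q}$.

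For part~(a), given $P=\sum_j\lambda_j P_j\otimes y_j$, $Q\in\mathcal{P}(^mG;E)$ and $t\in\mathcal{L}(F;H)$, the identity
\[
t\circ P\circ Q=\sum_j\lambda_j(P_j\circ Q)\otimes t(y_j)
\]
exhibits a hyper-$(s,r)$-nuclear representation of $t\circ P\circ Q$: the scalars are unchanged, $(t(y_j))$ is $\ell_\infty$-bounded by $\|t\|\|(y_j)\|_\infty$, and right-composition $R\mapsto R\circ Q$ is a bounded linear operator $\mathcal{P}(^nE)\to\mathcal{P}(^{mn}G)$ of norm at most $\|Q\|^n$, hence preserves weak $r$-summability with the expected norm bound. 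Multiplying the three factor-norms and taking the infimum gives $\|t\circ P\circ Q\|_\mathcal{Q}\le\|t\|\cdot\|P\|_\mathcal{Q}\cdot\|Q\|^n$, the hyper-ideal property with $C_m=1$.

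Part~(b) is the substantive step: the outer map is a polynomial $R\in\mathcal{P}(^rF;H)$, so one cannot simply pull the sum through $R$. My plan is to polarize: writing $R(y)=\check R(y,\dots,y)$ with $\|\check R\|\le\frac{r^r}{r!}\|R\|$ and expanding multilinearly through a representation of $P$ produces
\[
R\circ P\circ Q(g)=\sum_{j_1,\dots,j_r}\lambda_{j_1}\cdots\lambda_{j_r}\,\bigl((P_{j_1}\cdots P_{j_r})\circ Q\bigr)(g)\,\check R(y_{j_1},\dots,y_{j_r}),
\]
which is a strong-$(s,r)$-nuclear representation of the $rmn$-homogeneous composition, indexed by $r$-tuples. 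Routine Fubini-type computations on $r$-fold sums yield $\|(\lambda_{j_1}\cdots\lambda_{j_r})\|_s=\|(\lambda_j)\|_s^r$, $\|((P_{j_1}\cdots P_{j_r})\circ Q)\|_r\le\|Q\|^{rn}\|(P_j)\|_r^r$ (using the multiplicative bound $\|P_{j_1}\cdots P_{j_r}\|\le\|P_{j_1}\|\cdots\|P_{j_r}\|$), and $\|(\check R(y_{j_1},\dots,y_{j_r}))\|_\infty\le\frac{r^r}{r!}\|R\|\|(y_j)\|_\infty^r$. Multiplying and taking the infimum produces exactly the two-sided ideal inequality with $C_m=1$ and $K_r=r^r/r!$.

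The delicate step — and the structural reason for the split between (a) and (b) — is the multiplicative bound on $\|((P_{j_1}\cdots P_{j_r})\circ Q)\|_r$ used in part~(b): this is available for the strong notion but not for the weak one, since products of weakly $r$-summable polynomial sequences need not be weakly $r$-summable. That is precisely why part~(a) must be confined to left-composition by linear operators.
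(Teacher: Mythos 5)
Your proposal is correct and follows essentially the same route as the paper: part (a) via the series criterion and composition formula $t\circ P\circ Q=\sum_j\lambda_j(P_j\circ Q)\otimes t(y_j)$, and part (b) via polarization of $R$, the $r$-fold indexed expansion, and the three factor-norm estimates (with the $r^r/r!$ constant coming from $\|\check R\|\le\frac{r^r}{r!}\|R\|$). Your closing remark — that the multiplicative bound $\|P_{j_1}\cdots P_{j_r}\|\le\|P_{j_1}\|\cdots\|P_{j_r}\|$ is what forces strong summability in (b), since weak $r$-summability is not preserved under products — correctly identifies the structural reason for the split, which the paper leaves implicit.
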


\begin{proof}(a) The proof, using the series criterion (Theorem \ref{cs}(a)), follows from an obvious adaptation of the proof of \cite[Theorem 3.6]{ewerton}.\\
(b) To prove the two-sided ideal property, let $P\in\mathcal{P}_{\mathcal{SN}_{(s,r)}}(^nE;F)$, $Q\in \mathcal{P}(^mG;E)$ and $R\in\mathcal{P}(^lF;H)$. We can write $P=\sum\limits_{j=1}^\infty\lambda_jP_j\otimes y_j$, where $(\lambda_j)_{j=1}^\infty\in\ell_s$, $(P_j)_{j=1}^\infty\in\ell_r(\mathcal{P}(^nE))$ and $(y_j)_{j=1}^\infty\in\ell_\infty(F)$. For $j_1, \ldots, j_l \in \mathbb{N}$, $(P_{j_1} \cdots P_{j_l}) \circ Q \in {\cal P}(^{nml} G)$ ($x \in G \mapsto P_{j_1}(Q(x)) \cdots P_{j_l}(Q(x)) \in \mathbb{K}$). From
\begin{align*}
\sum\limits_{j_1,\ldots,j_l=1}^\infty \|(P_{j_1}\cdots P_{j_l})\circ Q\|^r&\le \sum\limits_{j_1,\ldots,j_l=1}^\infty \|P_{j_1}\|^r\cdots \|P_{j_l}\|^r\cdot\|Q\|^{nlr}\\
&= \left(\sum\limits_{j_1=1}^\infty\cdots\sum\limits_{j_l=1}^\infty\|P_{j_1}\|^r\cdots \|P_{j_l}\|^r\right)\cdot\|Q\|^{nlr}\\
&=\ap\sum\limits_{j_1=1}^\infty\|P_{j_1}\|^r\fp\cdots \ap\sum\limits_{j_l=1}^\infty\|P_{j_l}\|^r\fp\cdot\|Q\|^{nlr}\\
&=\ap\sum\limits_{j=1}^\infty\|P_{j}\|^r\fp^l\cdot\|Q\|^{nlr},
\end{align*} we deduce that $((P_{j_1}\cdots P_{j_l})\circ Q)_{j_1,\ldots,j_l=1}^\infty\in\ell_r(\mathcal{P}(^{nml}G))$ and $$\|((P_{j_1}\cdots P_{j_l})\circ Q)_{j_1,\ldots,j_l=1}^\infty\|_r\le\|(P_j)_{j=1}^\infty\|_r^l\cdot\|Q\|^{nl}.$$
Analogously, $(\lambda_{j_1}\cdots \lambda_{j_l})_{j_1,\ldots,j_l=1}^\infty\in\ell_s$ with $\|(\lambda_{j_1}\cdots \lambda_{j_l})_{j_1,\ldots,j_l=1}^\infty\|_s=\|(\lambda_j)_{j=1}^\infty\|_s^l$. We also have $$\|\check{R}(y_{j_1},\ldots,y_{j_l})\|\le\|\check{R}\|\cdot\|y_{j_1}\|\cdots\|y_{j_l}
\|\le\dfrac{l^l}{l!}\|R\|\cdot\|(y_j)_{j=1}^\infty\|_\infty^l,$$ which gives $(\check{R}(y_{j_1},\ldots,y_{j_l}))_{j_1,\ldots,j_l=1}^\infty\in\ell_\infty(H)$ with $$\|(\check{R}(y_{j_1},\ldots,y_{j_l}))_{j_1,\ldots,j_l=1}^\infty\|_\infty\le\dfrac{l^l}{l!} \|R\|\cdot\|(y_j)_{j=1}^\infty\|_\infty^l.$$
Since $1\le1/s+1/r$, H\"older's inequality shows that, for every $x \in G$, the series
\begin{equation}\label{nser}\sum\limits_{j_1,\ldots,j_l=1}^\infty(\lambda_{j_1}\cdots \lambda_{j_l})(P_{j_1}\cdots P_{j_r})\circ Q\otimes \check{R}(y_{j_1},\ldots,y_{j_l})(x)
\end{equation}
is absolutely convergent, hence convergent in $H$. Using that $P=\sum\limits_{j=1}^\infty\lambda_jP_j\otimes y_j$, it is immediate that the series (\ref{nser}) converges to $R\circ P \circ Q(x)$, therefore $$\sum\limits_{j_1,\ldots,j_l=1}^\infty(\lambda_{j_1}\cdots \lambda_{j_l})(P_{j_1}\cdots P_{j_r})\circ Q\otimes \check{R}(y_{j_1},\ldots,y_{j_l})$$
is a strong-$(s,r)$-nuclear representation of $R \circ P \circ Q$, that is, $R\circ P\circ Q\in \mathcal{P}_{\mathcal{SN}_{(s,r)}}(^{nml}G:H)$. Finally,
\begin{eqnarray*}\|R\circ P\circ Q\|_{\mathcal{P}_{\mathcal{SN}_{(s,r)}}}&\le& \|((P_{j_1}\cdots P_{j_l})\circ Q)_{j_1,\ldots,j_l=1}^\infty\|_r\cdot\|(\lambda_{j_1}\cdots \lambda_{j_l})_{j_1,\ldots,j_l=1}^\infty\|_s \cdot\\& &\|\check{R}(y_{j_1},\ldots,y_{j_l})_{j_1,\ldots,j_l=1}^\infty\|_\infty\\
&\leq&\dfrac{l^l}{l!}\|R\|\cdot \left(\|(P_j)_{j=1}^\infty\|_r\cdot\|(\lambda_j)\|_s\cdot\|(y_j)_{j=1}^\infty\|_\infty\right)^l
\cdot\|Q\|^{nl}.\end{eqnarray*} The desired inequality follows by taking the infimum over all representations of $P$.
\end{proof}

In particular, taking $r=1$ and $s=\infty$ we get that the class of hyper-nuclear polynomials is a Banach two-sided ideal. Moreover, reasoning as in the proof of \cite[Theorem 3.10]{ewerton}, we have the following:

\begin{teo}\label{snmb}The class $\mathcal{P}_{\mathcal{HN}}$ of hyper-nuclear homogenous polynomials is the smallest Banach $\ap1,\frac{n^n}{n!}\fp_{n=1}^\infty$-two-sided ideal (resp. Banach hyper-ideal), in sense that if $\mathcal{Q}$ is a Banach $(C_n,K_n)_{n=1}^\infty$-two-sided ideal
(resp. Banach $(C_n)_{n=1}^\infty$-hyper-ideal), then $\mathcal{P}_{\mathcal{HN}}\stackrel{(C_n)_{n}}{\longhookrightarrow}\mathcal{Q}$.\end{teo}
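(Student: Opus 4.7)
The plan is to split the statement into two parts. First, to verify that $\mathcal{P}_{\mathcal{HN}}$ is itself a Banach $\ap 1, \frac{n^n}{n!}\fp_{n=1}^\infty$-two-sided ideal (and hence, by Remark \ref{obssmi}(i), a Banach hyper-ideal), I would invoke part (b) of the preceding theorem with the endpoint parameters $s=\infty$, $r=1$, which give $p=1$. The only point requiring verification is the identification $\mathcal{P}_{\mathcal{SN}_{(\infty,1)}} = \mathcal{P}_{\mathcal{HN}}$ together with equality of norms; this is a short bookkeeping check, since the $\ell_1$ coefficient sequence in a hyper-nuclear representation can be absorbed into the scalar-valued polynomials to produce a strong-$(\infty,1)$-nuclear representation, and conversely the $\ell_1$-polynomial factors of a strong-$(\infty,1)$-nuclear representation can be normalised to recover a hyper-nuclear one.

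For the minimality claim it is enough to handle the hyper-ideal case, because every Banach $(C_n,K_n)_{n=1}^\infty$-two-sided ideal is automatically a Banach $(C_n)_{n=1}^\infty$-hyper-ideal. So I would fix a Banach $(C_n)_{n=1}^\infty$-hyper-ideal $\mathcal{Q}$ and a polynomial $P\in\mathcal{P}_{\mathcal{HN}}(^nE;F)$, and pick any hyper-nuclear representation
\[
P = \sum_{j=1}^\infty \lambda_j P_j \otimes y_j, \qquad (\lambda_j)\in\ell_1,\ (P_j)\in\ell_\infty^w(\mathcal{P}(^nE)),\ (y_j)\in\ell_\infty(F).
\]
Each rank-one summand fits the template $Q^k\otimes y$ of Proposition \ref{postoum} with $Q=P_j$ (of degree $n$), outer exponent $k=1$, and vector $\lambda_j y_j$; that proposition therefore places each $\lambda_j P_j\otimes y_j$ in $\mathcal{Q}(^nE;F)$ with
\[
\|\lambda_j P_j\otimes y_j\|_\mathcal{Q}\le C_n|\lambda_j|\cdot\|P_j\|\cdot\|y_j\|.
\]

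Summing these bounds, and using that $\|(P_j)\|_{w,\infty}=\sup_j\|P_j\|$, I get $\sum_j\|\lambda_j P_j\otimes y_j\|_\mathcal{Q}\le C_n\|(\lambda_j)\|_1\cdot\|(P_j)\|_{w,\infty}\cdot\|(y_j)\|_\infty<\infty$. Absolute convergence in the Banach space $\mathcal{Q}(^nE;F)$ then produces a $\mathcal{Q}$-limit $\widetilde{P}$; the continuous inclusion $\mathcal{Q}(^nE;F)\hookrightarrow\mathcal{P}(^nE;F)$ coming from (\ref{deship}) upgrades this to sup-norm convergence, so $\widetilde{P}$ must coincide with $P$ by uniqueness of limits. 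Hence $P\in\mathcal{Q}(^nE;F)$ with $\|P\|_\mathcal{Q}\le C_n\|(\lambda_j)\|_1\cdot\|(P_j)\|_{w,\infty}\cdot\|(y_j)\|_\infty$, and passing to the infimum over all hyper-nuclear representations of $P$ yields $\|P\|_\mathcal{Q}\le C_n\|P\|_{\mathcal{P}_{\mathcal{HN}}}$, which is exactly the claimed embedding $\mathcal{P}_{\mathcal{HN}}\stackrel{(C_n)_n}{\longhookrightarrow}\mathcal{Q}$.

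There is no real obstacle here: the machinery is entirely in place, mirroring the multilinear argument of \cite[Theorem 3.10]{ewerton}. The one subtlety worth checking carefully is that the constant from Proposition \ref{postoum} comes out as $C_n^1=C_n$, not a larger power; this depends on the (minor but essential) observation that in our factorisation the outer exponent is $k=1$ while the degree of the scalar inner polynomial is $n$, so the general bound $C_m^k$ of Proposition \ref{postoum} specialises to $C_n$ exactly as required by the statement.
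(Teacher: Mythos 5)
Your proof is correct and follows essentially the same route as the paper, which in fact gives no independent argument but simply points to the multilinear analogue in \cite[Theorem 3.10]{ewerton}; your write-out is the natural polynomial transcription of that argument. The two ingredients you isolate — the isometric identification $\mathcal{P}_{\mathcal{SN}_{(\infty,1)}}=\mathcal{P}_{\mathcal{HN}_{(1,\infty)}}$ (which the paper also uses, implicitly, in the sentence ``taking $r=1$ and $s=\infty$ we get that the class of hyper-nuclear polynomials is a Banach two-sided ideal''), and the application of Proposition~\ref{postoum} with inner degree $n$ and outer exponent $1$ to get the constant $C_n$ rather than a higher power — are exactly the right points to flag, and both are handled correctly.
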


Next we see another example of a classical closed polynomial ideal that fails to be a hyper-ideal:

\begin{ex}\rm Let $\mathcal{P}_{wsc}$ denote the class of weakly sequentially continuous homogeneous polynomials, that is, continuous homogeneous polynomials that send weakly convergent sequences to norm convergent sequences. It is well known that $\mathcal{P}_{wsc}$ is a closed polynomial ideal. Let us see that $\mathcal{P}_{wsc}$ is not a polynomial hyper/two-sided-ideal. Let $$P\colon \ell_2\longrightarrow\ell_1~,~ P((\lambda_j)_{j=1}^\infty)=(\lambda_j^2)_{j=1}^\infty.$$ The sequence $(e_j)_{j=1}^\infty$ of canonical unit vectors is weakly null in $\ell_2$ and non-norm null in $\ell_1$, so $P\notin \mathcal{P}_{wsc}(^2\ell_2;\ell_1)$. Since  $id_{\ell_1}\in\mathcal{P}_{wsc}(\ell_1;\ell_1)$ and $P=id_{\ell_1}\circ P$, $\mathcal{P}_{wsc}$ does not satisfy the hyper-ideal property.\end{ex}

Like the case of other polynomial ideals that fail to be hyper/two-sided ideals, we can provide a class that, in a certain sense, plays the role of ${\cal P}_{wsc}$ in the context of hyper/two-sided ideals. To do so we use the notion of polynomial convergence (see, for example, \cite[Section 6]{colegamelin}). By a {\it scalar-valued polynomial} on $E$ we mean a finite sum of scalar-valued homogeneous polynomials on $E$, in general of different degrees of homogeneity.

\begin{definition}\rm (a) A sequence $(x_j)_{j=1}^\infty$ in a Banach space $E$ {\it converges polynomially} to $x \in E$ if $p(x_j)\longrightarrow p(x)$ for every scalar-valued polynomial $p$ on $E$.\\
(b) A polynomial $P\in \mathcal{P}(^nE;F)$ is said to be {\it polynomially sequentially continuous} if $P(x_j)\longrightarrow P(x)$ in $F$ whenever $(x_j)_{j=1}^\infty$ converges polynomially to $x$ in $E$.\end{definition}

\begin{prop}\label{proppsc} The class $\mathcal{P}_{psc}$ of polynomially sequentially continuous polynomials is a closed polynomial two-sided ideal.\end{prop}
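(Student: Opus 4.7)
The plan is to verify the three data demanded by Definition \ref{dhip}(c) together with completeness in the sup norm, taking $C_n=K_n=1$ and the canonical norm $\|\cdot\|_{\mathcal{P}_{psc}}=\|\cdot\|$. Each component $\mathcal{P}_{psc}(^nE;F)$ is a linear subspace because polynomial convergence is preserved under linear combinations of polynomials. It contains the finite type polynomials: if $P=\sum_{i=1}^N\varphi_i^n\otimes y_i$ and $(x_j)$ converges polynomially to $x$ in $E$, then since each $\varphi_i\in E'$ is itself a scalar polynomial we have $\varphi_i(x_j)\to\varphi_i(x)$, whence $P(x_j)\to P(x)$ in norm. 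Finally $\widehat{I_n}(\lambda)=\lambda^n$ is a continuous polynomial on the finite dimensional space $\mathbb{K}$ (where polynomial convergence trivially coincides with norm convergence), hence polynomially sequentially continuous, with sup norm $1$.

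The heart of the argument is the two-sided ideal property. Fix $P\in\mathcal{P}_{psc}(^nE;F)$, $Q\in\mathcal{P}(^mG;E)$, $R\in\mathcal{P}(^rF;H)$, and suppose $(x_j)$ converges polynomially to $x$ in $G$. I would proceed in three stages. First, $Q(x_j)\to Q(x)$ polynomially in $E$: for any scalar polynomial $q$ on $E$, the composition $q\circ Q$ is a scalar polynomial on $G$, so $q(Q(x_j))\to q(Q(x))$ by hypothesis. Second, since $P$ is polynomially sequentially continuous, applying $P$ to the polynomially convergent sequence from stage one yields norm convergence $P(Q(x_j))\to P(Q(x))$ in $F$. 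Third, $R$ is a continuous polynomial, hence norm-to-norm continuous, so $R(P(Q(x_j)))\to R(P(Q(x)))$ in $H$. This shows $R\circ P\circ Q\in\mathcal{P}_{psc}(^{rmn}G;H)$, and the classical sup-norm estimate $\|R\circ P\circ Q\|\le\|R\|\cdot\|P\|^r\cdot\|Q\|^{rn}$ matches the two-sided ideal inequality with all constants equal to $1$.

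For closedness in the sup norm, let $(P_k)\subseteq\mathcal{P}_{psc}(^nE;F)$ converge uniformly to $P\in\mathcal{P}(^nE;F)$, and let $(x_j)\to x$ polynomially in $E$. A polynomially convergent sequence is in particular weakly convergent, because every linear functional is a scalar polynomial, so by the Banach--Steinhaus theorem $(x_j)$ is norm bounded; let $M$ be a common bound for $\|x_j\|$ and $\|x\|$. A standard $\varepsilon/3$ argument then finishes the job: given $\varepsilon>0$, choose $k$ with $\|P-P_k\|<\varepsilon/(3M^n)$ and then $j_0$ such that $\|P_k(x_j)-P_k(x)\|<\varepsilon/3$ for $j\ge j_0$; the triangle inequality delivers $\|P(x_j)-P(x)\|<\varepsilon$. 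There is no serious obstacle; the conceptual point of the whole proof is that right composition with an arbitrary continuous polynomial preserves polynomial convergence, which is exactly the polynomial analogue of bounded linear operators being weak-to-weak continuous.
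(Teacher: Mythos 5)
Your proof is correct and follows essentially the same route as the paper: the two-sided ideal property is verified by the identical three-stage argument (right composition preserves polynomial convergence because $p\circ Q$ is again a scalar polynomial, then apply $P$, then use continuity of $R$), and closedness rests on the same observation that polynomially convergent sequences are weakly convergent hence bounded. The only difference is cosmetic: the paper delegates the closedness estimate to an external reference, whereas you spell out the $\varepsilon/3$ argument, which is a welcome bit of self-containment but not a different method.
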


\begin{proof} It is plain that $\mathcal{P}_{psc}(^nE;F)$ is linear subspace of ${\cal P}(^nE;F)$ containing the finite rank polynomials. As to the two-sided ideal property, let $Q\in \mathcal{P}(^mG;E)$, $P \in \mathcal{P}_{psc}(^nE;F)$, $R \in \mathcal{P}(^rF;H)$ and $(x_j)_{j=1}^\infty$ be a sequence in $G$ converging polynomially to $x$. For every scalar-valued polynomial $p$ on $E$, we have that $p\circ Q$ is a scalar-valued polynomial on $G$, so $p(Q(x_j))\longrightarrow p(Q(x)),$ showing that $(Q(x_j))_{j=1}^\infty$ converges polynomially to $Q(x)$. Since $P$ is polynomially sequentially continuous, $P(Q(x_j))\longrightarrow P(Q(x))$ in $F$. The continuity of $R$ gives $R\circ P \circ Q(x_j) \longrightarrow R\circ P\circ Q(x)$ in $H$, proving that $R\circ P\circ Q\in \mathcal{P}_{psc}(^{rnm}G;H).$ By definition, polynomially convergent sequences are weakly convergent, hence bounded. This allows us to follow the steps of the proof of \cite[Theorem 4.8]{ewerton} to conclude that $\mathcal{P}_{psc}(^nE;F)$ is closed in $\mathcal{P}(^nE;F)$.\end{proof}

We shall return to the closed two-sided ideal $\mathcal{P}_{psc}$ in Example \ref{exex}.

\section{Polynomial hyper-ideals and two-sided ideals generated by multilinear hyper-ideals}
\label{multil}
We start a series of three sections in which we provide methods to generate polynomial hyper/two-sided ideals. The first method generates polynomial hyper/two-sided ideals from a given hyper-ideal of multilinear operators. We have so far referred to the notion of hyper-ideal of multilinear operators several times, but at this point we need the definition in detail. We use the usual notation: ${\cal L}(E_1, \ldots, E_n;F)$ denotes the Banach space of continuous $n$-linear operators from $E_1 \times \cdots \times E_n$ to $F$ endowed with the usual sup norm. If $E_1 = \cdots = E_n = E$, we write ${\cal L}(^nE;F)$. For $A \in {\cal L}(^nE;F)$, by $\widehat{A}$ we denote the $n$-homogeneous polynomial given by $\widehat{A}(x)= A(x, \ldots, x)$. And for $P \in {\cal P}(^nE;F)$, by $\check P$ we mean the unique symmetric $n$-linear operator in ${\cal L}(^nE;F)$ such that $(\check P)^\wedge= P$.

\begin{definition}\label{dhi}\rm Let $p\in (0,1]$. A \textit{$p$-normed hyper-ideal of multilinear operators}, or simply a \textit{$p$-normed multilinear hyper-ideal}, is a subclass $\mathcal{H}$ of the class of all continuous multilinear operators between Banach spaces endowed with a function $\|\cdot\|_{\mathcal{H}} \colon \mathcal{H} \longrightarrow [0,\infty)$ such that for all $n\in \mathbb{N}$ and Banach spaces $E_1, \ldots, E_n$ and $F$, the components $$\mathcal{H}(E_1,\ldots, E_n;F):=\mathcal{L}(E_1,\ldots, E_n;F)\cap \mathcal{H}$$ satisfy:\\
$(1)$ $\mathcal{H}(E_1,\ldots, E_n;F)$ is a linear subspace of $\mathcal{L}(E_1,\ldots, E_n;F)$ which contains the $n$-linear operators of finite type, on which $\|\cdot\|_{\mathcal{H}}$ is a $p$-norm.\\
(2) $\|I_n \colon \mathbb{K}^n\longrightarrow \mathbb{K}, I_n(\lambda_1,\ldots,\lambda_n)=\lambda_1\cdots\lambda_n\|_{\cal H} =1$ for every $n$.\\
$(3)$ {\bf The hyper-ideal property:} Given natural numbers $n$ and $1\le m_1<\cdots<m_n$, Banach spaces $G_1,\ldots,G_{m_n}$, $E_1,\ldots,E_n$, $F,H$, if  $B_1\in \mathcal{L}(G_1,\ldots, G_{m_1};E_1), \ldots, B_n\in \mathcal{L}(G_{m_{n-1}+1},\ldots, G_{m_n};E_n)$, $t \in \mathcal{L}(F;H)$ and
$A \in \mathcal{H}(E_1,\ldots, E_n;F)$, then $t\circ A\circ(B_1,\ldots,B_n)$ belongs to $\mathcal{H}(G_1,\ldots, G_{m_n};H)$ and
\begin{equation}\|t\circ A\circ(B_1,\ldots,B_n)\|_{\mathcal{H}}\le\|t\|\cdot\|A\|_{\mathcal{H}}\cdot
\|B_1\|\cdots\|B_n\|,\label{eqhi}\end{equation}
The notions of \textit{$p$-Banach multilinear hyper-ideal}, \textit{normed multilinear hyper-ideal}, \textit{Banach multilinear hyper-ideal}, {\it quasi-normed multilinear hyper-ideal} and  {\it quasi-Banach multilinear hyper-ideal} are defined in the obvious way.\end{definition}

By $\cal P$ we denote the class of all continuous homogenous polynomials between Banach spaces.

\begin{definition}\rm Let $\mathcal{G}$ be a subclass of the class of all continuous multilinear operators between Banach spaces endowed with a function $\|\cdot\|_\mathcal{G}\colon \mathcal{G}\longrightarrow\mathbb{R}$. We define \\
(a) $\mathcal{P}^{\mathcal{G}}:=\{P\in\mathcal{P} : \ \mbox{there is} \ A\in\mathcal{G} \ \mbox{such that} \ P=\widehat{A}\},$ and
$$\|P\|_{\mathcal{P}^{\mathcal{G}}}=\inf\{\|A\|_{\mathcal{G}} : \ A \in \mathcal{G} \ \mbox{and} \ P=\widehat{A}\}.$$
(b) $\mathcal{P}_\mathcal{G}:=\{P\in\mathcal{P} : \check{P}\in\mathcal{G}\}$ and $\|P\|_{\mathcal{P}_\mathcal{G}}=\|\check{P}\|_\mathcal{G}.$ \\
\indent Note that $\mathcal{P}_\mathcal{G}\longhookrightarrow\mathcal{P}^\mathcal{G}$.\end{definition}

\begin{teo}\label{hiph} If $\mathcal{H}$ is a $p$-normed ($p$-Banach) multilinear hyper-ideal, then $\mathcal{P}^{\mathcal{H}}$ is a $p$-normed ($p$-Banach) polynomial $\ap\frac{n^n}{n!}\fp_{n=1}^\infty$-hyper-ideal.\end{teo}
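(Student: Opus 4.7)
The plan is to check, in order, the structural requirements of Definition \ref{dhip}(a) for $\mathcal{P}^{\mathcal{H}}$, and then to verify the hyper-ideal property with constants $C_m = m^m/m!$. For the $p$-Banach case I will reduce completeness to Theorem \ref{cs}(a) by lifting any $p$-summable sequence of polynomials to a $p$-summable sequence of multilinear representatives in $\mathcal{H}$.

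First I would show that each component $\mathcal{P}^{\mathcal{H}}(^nE;F)$ is a linear subspace of $\mathcal{P}(^nE;F)$ containing the polynomials of finite type: if $P = \varphi^n \otimes y$, the $n$-linear operator $A(x_1,\ldots,x_n) := \varphi(x_1)\cdots\varphi(x_n)\,y$ satisfies $\widehat{A} = P$ and belongs to $\mathcal{H}$ (it is of finite type). Linearity and the $p$-norm axioms follow from the standard ``$\varepsilon$-perturbation'' trick: given representations $P_i = \widehat{A_i}$ with $\|A_i\|_{\mathcal{H}}^p \le \|P_i\|_{\mathcal{P}^\mathcal{H}}^p + \varepsilon$, the operator $A_1+A_2$ represents $P_1+P_2$, so $\|P_1+P_2\|_{\mathcal{P}^\mathcal{H}}^p \le \|A_1\|_\mathcal{H}^p+\|A_2\|_\mathcal{H}^p$ and one lets $\varepsilon \to 0$. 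Since $\widehat{I_n}$ is itself represented by the $n$-linear operator $I_n$ of Definition \ref{dhi}(2), which has $\|I_n\|_\mathcal{H}=1$, we get $\|\widehat{I_n}\|_{\mathcal{P}^\mathcal{H}} \le 1$; the reverse inequality comes from $\|\cdot\| \le \|\cdot\|_\mathcal{H}$ (valid for multilinear hyper-ideals for the same reason as Remark \ref{obssmi}(i)) which yields $\|\widehat{I_n}\| \le \|\widehat{I_n}\|_{\mathcal{P}^\mathcal{H}}$.

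The key step is the hyper-ideal property. Fix $P \in \mathcal{P}^{\mathcal{H}}(^nE;F)$, $Q \in \mathcal{P}(^mG;E)$ and $t \in \mathcal{L}(F;H)$, and pick $A \in \mathcal{H}(^nE;F)$ with $\widehat{A}=P$. I would define the $mn$-linear operator
$$B := t\circ A \circ (\underbrace{\check{Q},\ldots,\check{Q}}_{n\ \text{copies}}) \colon G^{mn} \longrightarrow H,$$
where each $\check{Q}$ is the $m$-linear symmetrisation of $Q$. A direct computation on the diagonal gives $\widehat{B}(x) = t(A(Q(x),\ldots,Q(x))) = (t\circ P\circ Q)(x)$, so $t\circ P\circ Q \in \mathcal{P}^{\mathcal{H}}(^{mn}G;H)$. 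The multilinear hyper-ideal property \eqref{eqhi} (applied with $m_1 = m,\, m_2 = 2m,\,\ldots,\,m_n = mn$) yields $B\in\mathcal{H}$ with
$$\|B\|_\mathcal{H} \le \|t\|\cdot\|A\|_\mathcal{H}\cdot \|\check{Q}\|^n \le \|t\|\cdot\|A\|_\mathcal{H}\cdot \left(\frac{m^m}{m!}\right)^n\|Q\|^n,$$
using the classical polarisation bound $\|\check{Q}\|\le \frac{m^m}{m!}\|Q\|$. Taking the infimum over representations $A$ of $P$ gives exactly the required estimate with $C_m = m^m/m!$; note $C_1 = 1$ as demanded.

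Finally, for the $p$-Banach case I would invoke Theorem \ref{cs}(a): given $(P_j)_{j=1}^\infty \subseteq \mathcal{P}^\mathcal{H}(^nE;F)$ with $\sum_j \|P_j\|_{\mathcal{P}^\mathcal{H}}^p < \infty$, choose representations $P_j = \widehat{A_j}$ with $\|A_j\|_\mathcal{H}^p \le \|P_j\|_{\mathcal{P}^\mathcal{H}}^p + 2^{-j}$. Then $\sum_j \|A_j\|_\mathcal{H}^p < \infty$, so by completeness of $\mathcal{H}(^nE;F)$ the series $A := \sum_j A_j$ converges there with $\|A\|_\mathcal{H}^p \le \sum_j \|A_j\|_\mathcal{H}^p$; passing to polynomials gives $\sum_j P_j = \widehat{A} \in \mathcal{P}^\mathcal{H}(^nE;F)$ with the right $p$-norm control. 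The main technical obstacle is the polarisation factor in the middle step: one has to ensure that the $m^m/m!$ constant arises per ``$Q$-block'' but only to the $n$-th power (one per multilinear argument), and that the exponent $n$ on $\|Q\|$, not $mn$, reflects the composition with a single $Q$ at each coordinate; this is exactly what the symmetrisation construction of $B$ produces.
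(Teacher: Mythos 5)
Your proposal follows essentially the same route as the paper's proof: represent $P$ by a multilinear $A\in\mathcal{H}$, compose with $n$ copies of the symmetrisation $\check Q$ to get a multilinear representative of $t\circ P\circ Q$, apply the multilinear hyper-ideal inequality together with the polarisation bound $\|\check Q\|\le\frac{m^m}{m!}\|Q\|$, and lift $p$-summable polynomial sequences to $p$-summable multilinear ones in the complete case. The only slip is in the completeness step: the additive slack $\|A_j\|_{\mathcal H}^p\le\|P_j\|_{\mathcal P^{\mathcal H}}^p+2^{-j}$ leaves a spurious $+1$ in the final estimate, so to verify condition (ii) of the series criterion you should instead take multiplicative slack $\|A_j\|_{\mathcal H}<(1+\varepsilon)\|P_j\|_{\mathcal P^{\mathcal H}}$ and let $\varepsilon\to 0^{+}$, as the paper does.
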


\begin{proof} It is clear that $\mathcal{P}^\mathcal{H}(^nE;F)$ is a linear subspace of $\mathcal{P}(^nE;F)$. Using that $\|\cdot\|_{\mathcal{H}}$ is a $p$-norm on the componentes of $\cal H$, it is easy to check that $\|\cdot\|_{\mathcal{P}^\mathcal{H}}$ is a $p$-norm on the components ${\cal P}^{\cal H}$. Given a polynomial of finite type $P =\sum\limits_{j=1}^k\varphi_j^n\otimes y_j\in\mathcal{P}(^nE;F)$, $\check{P}=\sum\limits_{j=1}^k\varphi_j\otimes\stackrel{(n)}{\cdots}\otimes\varphi_j$ is an $n$-linear operator of finite type, hence $ \check P \in \cal H$. It follows that $P\in\mathcal{P}^\mathcal{H}(^nE;F)$.

It is immediate that $\|\widehat{I_n}\|_{\mathcal{P}^\mathcal{H}}\le\|I_n\|_\mathcal{H}=1.$ Supposing $\|\widehat{I_n}\|_{\mathcal{P}^\mathcal{H}}<1$, there would exist an $n$-linear form $A\in\mathcal{H}(^n\mathbb{K})$ such that $A(\lambda_1, \ldots, \lambda_n)=\lambda_1\cdots\lambda_n$ for every $\lambda_1,\ldots,\lambda_n \in \mathbb{K}$ and $\|A\|_\mathcal{H}<1$. In this case, $$1=|A(1,\ldots,1)|\le\|A\|\le\|A\|_\mathcal{H}<1,$$
a contradiction that gives $\|\widehat{I_n}\|_{\mathcal{P}^\mathcal{H}}=1$.

Given $t\in \mathcal{L}(F;H)$, $Q\in\mathcal{P}^\mathcal{H}(^nE;F)$ and $R\in\mathcal{P}(^mG;E)$, pick $A\in\mathcal{H}(^nE;F)$ such that $\widehat{A}=Q$ and $B\in\mathcal{L}(^mG;E)$ such that $\widehat{B}=R$. Then $t\circ A\circ (B,\ldots,B)\in\mathcal{H}(^{mn}G;H)$. Since $(t\circ A\circ (B,\ldots,B))^\wedge=t\circ Q \circ R$,
we conclude that $t\circ Q \circ R\in\mathcal{P}^\mathcal{H}(^{mn}G;H)$.
Furthermore,
\begin{align*}\|t\circ Q \circ R\|_{\mathcal{P}^\mathcal{H}}&\le\|t\circ A\circ (B,\ldots,B)\|_\mathcal{H}\le\|t\|\cdot\|A\|_\mathcal{H}\cdot\|B\|^n\\&\le \|t\|\cdot\|A\|_\mathcal{H}\cdot \left(\frac{m^m}{m!}\|\widehat{B}\|\right)^n =\ap\frac{m^m}{m!}\fp^n\|t\|\cdot\|A\|_\mathcal{H}\cdot\|R\|^n.\end{align*}
It follows that $\|t\circ Q \circ R\|_{\mathcal{P}^\mathcal{H}}\le \ap\frac{m^m}{m!}\fp^n\|t\|\cdot\|Q\|_\mathcal{H}\cdot\|R\|^n$.

Assume now that the multilinear hyper-ideal $\mathcal{H}$ is complete. Let $(P_j)_{j=1}^\infty$ in $\mathcal{P}^\mathcal{H}(^nE;F)$ be such that $\sum\limits_{j=1}^\infty \|P_j\|_{\mathcal{P}^\mathcal{H}}^p<\infty$. Given  $\varepsilon>0$, for each $j\in\mathbb{N}$ there is a $n$-linear operator $A_j\in\mathcal{H}(^nE;F)$ such that $\widehat{A}_j=P_j$ and  $\|A_j\|_{\mathcal{H}}<(1+\varepsilon)\|P_j\|_{\mathcal{P}^\mathcal{H}}.$ Since $$\sum\limits_{j=1}^\infty \|A_j\|_{\mathcal{H}}^p <(1+\varepsilon)^p\cdot\sum\limits_{j=1}^\infty \|P_j\|_{\mathcal{P}^\mathcal{H}}^p<\infty,$$ and $\mathcal{H}$ is complete, the series $\sum\limits_{j=1}^\infty A_j$ converges in $\mathcal{H}(^nE;F)$, say $A =\sum\limits_{j=1}^\infty A_j \in\mathcal{H}(^nE;F)$. Then $P:=\widehat{A} \in {\cal P}(^nE;F)$ and by the definition of ${\cal P}^{\cal H}$ it follows that $P\in\mathcal{P}^\mathcal{H}(^nE;F)$. Therefore,
$$\left\|P - \sum_{j=1}^k P_j\right\|_{\mathcal{P}^\mathcal{H}} =\left\|\widehat{A} - \sum_{j=1}^k \widehat{A}_j\right\|_{\mathcal{P}^\mathcal{H}} = \left\|\left(A - \sum_{j=1}^k A_j\right)^{\wedge}\right\|_{\mathcal{P}^\mathcal{H}} \le \left\|A - \sum_{j=1}^k A_j\right\|_{\mathcal{H}}\stackrel{k \rightarrow \infty}{\longrightarrow} 0,$$
which proves that $\sum\limits_{j=1}^\infty P_j = P \in \mathcal{P}^\mathcal{H}(^nE;F)$ and finishes the proof that $\mathcal{P}^\mathcal{H}$ is complete.\end{proof}

Contrary to the case of multilinear/polynomial ideals, it is not to be expected that ${\cal P}_{\cal H}$ is a polynomial hyper-ideal whenever $\cal H$ is a multilinear hyper-ideal. Let us give a brief reasoning. Given a multilinear hyper-ideal $\cal H$, $P \in {\cal P}_{\cal H}(^nE;F)$ and $Q \in {\cal P}(^m G;E)$, $P \circ Q$ does not belong to ${\cal P}_{\cal H}$ in general. Indeed, all we know is that $\check P \in {\cal H}(^nE;F)$, and from the hyper-ideal property of $\cal H$ we can only conclude that $\check P \circ (\check Q, \ldots, \check Q)$ belongs to $\cal H$. For $P \circ Q$ to belong to ${\cal P}_{\cal H}$ we should have $(P \circ Q)^\vee$ in $\cal H$, so everything would work if $(P \circ Q)^\vee = \check P \circ (\check Q, \ldots, \check Q)$. There is no hope for this equality to hold, because the multilinear operator $\check P \circ (\check Q, \ldots, \check Q)$ is not symmetric in general.

Thus we need to impose an extra condition on the multilinear hyper-ideal $\cal H$ to guarantee that ${\cal P}_{\cal H}$ is a polynomial hyper-ideal.

\begin{definition}\rm Let $S_n$ denote the group of permutations of $\{1,\ldots,n\}$. Given a multilinear operator $A\in\mathcal{L}(^nE;F)$ and a $\sigma\in S_n$, define $A_\sigma$ and  $A_s$ in $\mathcal{L}(^nE;F)$ by $$A_\sigma(x_1,\ldots,x_n)=A(x_{\sigma(1)},\ldots,x_{\sigma(n)}),$$
$$A_s(x_1,\ldots,x_n)=\frac{1}{n!}\cdot\sum\limits_{\sigma\in S_n}A(x_{\sigma(1)},\ldots,x_{\sigma(n)})=\frac{1}{n!}\cdot\sum\limits_{\sigma\in S_n}A_\sigma(x_1,\ldots,x_n).$$
\indent We say that a subclass $\mathcal{G}$ of the class of continuous multilinear operators between Banach spaces is {\it symmetric} if $A_s\in\mathcal{G}(^nE;F)$ whenever $A\in\mathcal{G}(^nE;F).$ If $\cal G$ is endowed with a function $\|\cdot\|_\mathcal{G}\colon \mathcal{G}\longrightarrow[0,\infty)$, we say that $\mathcal{G}$ is {\it strongly symmetric} if $A_\sigma \in {\cal G} {\rm ~and~} \|A_\sigma\|_\mathcal{G}=\|A\|_\mathcal{G}$ for all $A\in\mathcal{G}(^nE;F)$ and $\sigma\in S_n$.\end{definition}

The next lemma is a straightforward consequence of the following fact: if $P$ is a homogeneous polynomial and $A$ is a multilinear operator such that $P = \widehat{A}$, then
$$\check{P}=(\widehat{A})^\vee=A_s.$$

\begin{lema}\label{lemasim}A subclass $\mathcal{G}$ of the class of continuous multilinear operators between Banach spaces is symmetric if and only if  $\mathcal{P}^\mathcal{G}=\mathcal{P}_\mathcal{G}$.\end{lema}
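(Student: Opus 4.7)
My plan is to prove both implications by exploiting the displayed identity $\check{P} = (\widehat{A})^{\vee} = A_s$ that is stated just before the lemma; once this identity is in hand, each direction is essentially a one-line argument.

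First I would establish the easy inclusion $\mathcal{P}_\mathcal{G} \subseteq \mathcal{P}^\mathcal{G}$, which requires no symmetry hypothesis: if $\check{P} \in \mathcal{G}$, then $P = \widehat{\check{P}}$ exhibits $P$ as in $\mathcal{P}^\mathcal{G}$. This inclusion is already noted in the paper and can be invoked directly.

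For the forward direction ($\mathcal{G}$ symmetric $\Rightarrow \mathcal{P}^\mathcal{G}=\mathcal{P}_\mathcal{G}$), I would take $P \in \mathcal{P}^\mathcal{G}(^nE;F)$ and pick $A \in \mathcal{G}(^nE;F)$ with $P = \widehat{A}$. The cited identity gives $\check{P} = A_s$; the symmetry of $\mathcal{G}$ yields $A_s \in \mathcal{G}$, so $\check{P} \in \mathcal{G}$, i.e.\ $P \in \mathcal{P}_\mathcal{G}$. Combined with the easy inclusion this gives equality.

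For the converse ($\mathcal{P}^\mathcal{G}=\mathcal{P}_\mathcal{G} \Rightarrow \mathcal{G}$ symmetric), I would start with an arbitrary $A \in \mathcal{G}(^nE;F)$ and form $P := \widehat{A}$, which lies in $\mathcal{P}^\mathcal{G}$ by definition. The assumed equality places $P$ in $\mathcal{P}_\mathcal{G}$, hence $\check{P} \in \mathcal{G}$; but $\check{P} = A_s$, so $A_s \in \mathcal{G}(^nE;F)$, proving symmetry. The main (and only) obstacle is really just being comfortable with the identification $\check{P} = A_s$ whenever $P = \widehat{A}$, which follows from the fact that $A_s$ is the unique symmetric $n$-linear operator whose associated polynomial equals $P = \widehat{A}$; since the excerpt states this identity explicitly, the proof is essentially immediate.
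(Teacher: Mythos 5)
Your proposal is correct and coincides with the argument the paper has in mind: the paper gives no explicit proof but states the lemma is a straightforward consequence of the identity $\check{P}=(\widehat{A})^{\vee}=A_s$, and both directions of your argument are precisely the routine applications of that identity.
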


\begin{prop}\label{corhip}
{\rm (a)} If $0 < p \leq 1$ and $\mathcal{H}$ is a strongly symmetric $p$-normed multilinear hyper-ideal, then
$$\|P\|_{\mathcal{P}^\mathcal{H}}\leq \|P\|_{\mathcal{P}_\mathcal{H}} \leq (n!)^{\frac{1}{p}-1} \|P\|_{\mathcal{P}^\mathcal{H}}$$
for all $n \in \mathbb{N}$ and $P\in {\cal P}_{\cal H}(^nE;F)$.\\
{\rm (b)} If $\mathcal{H}$ is a strongly symmetric normed (Banach) multilinear hyper-ideal, then $\mathcal{P}^\mathcal{H}=\mathcal{P}_\mathcal{H}$ isometrically. In particular, $\mathcal{P}_\mathcal{H}$ is a normed (Banach) polynomial $\ap\frac{n^n}{n!}\fp_{n=1}^\infty$-hyper-ideal. \end{prop}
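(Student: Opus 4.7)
The plan is to exploit the strong symmetry to average over $S_n$ and then control the $p$-norm via the $p$-subadditivity. The inequality $\|P\|_{\mathcal{P}^\mathcal{H}}\leq \|P\|_{\mathcal{P}_\mathcal{H}}$ is essentially for free: it is the content of the remark $\mathcal{P}_\mathcal{G}\longhookrightarrow\mathcal{P}^\mathcal{G}$ following the definition of $\mathcal{P}^\mathcal{G}$ and $\mathcal{P}_\mathcal{G}$, since $\check P\in\mathcal{H}$ is a particular choice of a multilinear operator $A$ with $\widehat{A}=P$.

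For the second inequality in (a), I would fix $P\in\mathcal{P}_\mathcal{H}(^nE;F)$ and an arbitrary representative $A\in\mathcal{H}(^nE;F)$ with $\widehat{A}=P$. The key identity, already recorded just before Lemma \ref{lemasim}, is
\[
\check P = (\widehat{A})^\vee = A_s = \frac{1}{n!}\sum_{\sigma\in S_n} A_\sigma .
\]
Strong symmetry gives $A_\sigma\in\mathcal{H}$ with $\|A_\sigma\|_\mathcal{H}=\|A\|_\mathcal{H}$ for every $\sigma\in S_n$. Applying the $p$-norm inequality on $\mathcal{H}(^nE;F)$ to this sum,
\[
\|\check P\|_\mathcal{H}^{\,p} \le \sum_{\sigma\in S_n} \Bigl\|\tfrac{1}{n!}A_\sigma\Bigr\|_\mathcal{H}^{\,p}
= \frac{1}{(n!)^p}\sum_{\sigma\in S_n}\|A\|_\mathcal{H}^{\,p}
= (n!)^{1-p}\,\|A\|_\mathcal{H}^{\,p},
\]
hence $\|P\|_{\mathcal{P}_\mathcal{H}}=\|\check P\|_\mathcal{H}\le (n!)^{1/p-1}\|A\|_\mathcal{H}$. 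Taking the infimum over all representatives $A$ yields the claimed bound $\|P\|_{\mathcal{P}_\mathcal{H}}\le (n!)^{1/p-1}\|P\|_{\mathcal{P}^\mathcal{H}}$.

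For (b) I would specialize $p=1$, in which case $(n!)^{1/p-1}=1$ and the two inequalities from (a) collapse to an isometric equality of norms on $\mathcal{P}_\mathcal{H}(^nE;F)$. The equality of the underlying classes $\mathcal{P}^\mathcal{H}=\mathcal{P}_\mathcal{H}$ is exactly Lemma \ref{lemasim}, once one observes that strong symmetry implies symmetry: since each $\mathcal{H}(^nE;F)$ is a linear subspace and $A_s$ is a convex combination of the $A_\sigma$'s, $A\in\mathcal{H}$ forces $A_s\in\mathcal{H}$. Combining this with Theorem \ref{hiph} transfers the $\bigl(\tfrac{n^n}{n!}\bigr)_{n=1}^\infty$-hyper-ideal structure (and the Banach completeness, when applicable) from $\mathcal{P}^\mathcal{H}$ to $\mathcal{P}_\mathcal{H}$.

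The only genuine point is the averaging step in (a): one must resist the temptation to write $\|A_s\|_\mathcal{H}\le \frac{1}{n!}\sum_\sigma\|A_\sigma\|_\mathcal{H}=\|A\|_\mathcal{H}$, which is false for $p<1$. The correct estimate is obtained by applying $p$-subadditivity \emph{before} pulling the scalar $1/n!$ out, producing the factor $(n!)^{1/p-1}$; this factor is tight precisely to the extent that the $p$-quasi-norm fails to be a norm, and disappears in (b).
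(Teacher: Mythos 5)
Your proof is correct and follows essentially the same route as the paper: the first inequality is the trivial embedding $\mathcal{P}_\mathcal{H}\longhookrightarrow\mathcal{P}^\mathcal{H}$, the second comes from writing $\check P = A_s = \frac{1}{n!}\sum_{\sigma\in S_n}A_\sigma$, applying $p$-subadditivity and strong symmetry, and taking the infimum, and (b) follows from (a) with $p=1$ together with Lemma \ref{lemasim} and Theorem \ref{hiph}. Your explicit remark that strong symmetry implies symmetry (since $A_s$ lies in the linear span of the $A_\sigma$'s, each of which is in $\mathcal{H}$) is a small, worthwhile clarification that the paper leaves implicit when invoking Lemma \ref{lemasim}.
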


\begin{proof} (a) Only the second inequality demands a proof. Let $P\in\mathcal{P}_\mathcal{H}(^nE;F)$ and $A\in\mathcal{H}(^nE;F)$ be such that $\widehat{A}=P$. Since $\check{P}=A_s$ and $\mathcal{H}$ is strongly symmetric, we have
\begin{align*}\|P\|_{\mathcal{P}_\mathcal{H}} &= \left(\|\check P\|_{\mathcal{H}}^p\right)^{1/p} =
\left(\|A_s\|_\mathcal{H}^p\right)^{1/p}=\left(\an\dfrac{1}{n!}\cdot \sum\limits_{\sigma\in S_n}A_\sigma\fn_\mathcal{H}^p\right)^{1/p}\\&\le \dfrac{1}{n!}\cdot\left(\sum\limits_{\sigma\in S_n}\|A_\sigma\|_\mathcal{H}^p\right)^{1/p}=\dfrac{1}{n!}\cdot\left(\sum\limits_{\sigma\in S_n}\|A\|_\mathcal{H}^p\right)^{1/p}=\frac{(n!)^{1/p}}{n!}\|A\|_\mathcal{H}.
\end{align*}
Take the infimum over all such multilinear operators $A$ to get the desired inequality.\\
\noindent (b) $\mathcal{P}^\mathcal{H}=\mathcal{P}_\mathcal{H}$ by Lemma \ref{lemasim} and $\|\cdot\|_{\mathcal{P}_\mathcal{H}}=\|\cdot\|_{\mathcal{P}^\mathcal{H}}$ by the item (a) with $p=1$. The result follows from Theorem \ref{hiph}.\end{proof}

It is clear that some extra condition must be imposed on a multilinear hyper-ideal $\cal H$ for ${\cal P}_{\cal H}$ to be a polynomial two-sided ideal. We end this section by providing one such condition, that is related to the so-called {\it factorization method} of generating multilinear ideals from a given operator ideal (cf., e.g., \cite{botelho1}).

\begin{definition}\rm Let $\mathcal{H}$ be a $p$-normed multilinear hyper-ideal and $A\in\mathcal{L}(E_1,\ldots,E_n;F)$. We write that $A\in\mathcal{L}\circ\mathcal{H}(E_1,\ldots,E_n;F)$ if we can find natural numbers $n$ and $r$ with $m r=n$, a Banach space $G$, multilinear operators $B_1\in\mathcal{H}(E_1,\ldots,E_m;G), \ldots
B_m\in\mathcal{H}(E_{(m-1)\cdot r},\ldots,E_n;G)$ and a symmetric multilinear operator $C\in\mathcal{L}_s(^rG;F)$ such that
\begin{equation} A=C\circ(B_1,\ldots,B_r). \label{factmeth}\end{equation} In this case we write $$\|P\|_{\mathcal{L}\circ\mathcal{H}}=\inf\{\|C\|\cdot\|B_1\|_\mathcal{H}
\cdots\|B_r\|_\mathcal{H}\},$$ where the infimum is taken over all factorizations of $A$ as in (\ref{factmeth}).\end{definition}

\begin{prop}{\rm (a)} If $\mathcal{H}$ is a $p$-normed ($p$-Banach) multilinear  hyper-ideal and $\mathcal{L}\circ\mathcal{H}{\longhookrightarrow}\mathcal{H}$,
then $\mathcal{P}^\mathcal{H}$ is a $p$-normed ($p$-Banach) polynomial $\ap\frac{n^n}{n!},\frac{n^n}{n!}\fp_{n=1}^\infty$-two-sided ideal.

\noindent{\rm(b)} If $\mathcal{H}$ is a normed (Banach) strongly symmetric multilinear hyper-ideal and $\mathcal{L}\circ\mathcal{H}{\longhookrightarrow}\mathcal{H}$, then $\mathcal{P}_\mathcal{H}$ is a normed (Banach) polynomial $\ap\frac{n^n}{n!},\frac{n^n}{n!}\fp_{n=1}^\infty$-two-sided ideal.\end{prop}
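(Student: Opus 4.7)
The first part is the substantial one; (b) will follow almost for free from (a) together with Proposition \ref{corhip}(b). Since Theorem \ref{hiph} already gives that $\mathcal{P}^{\mathcal{H}}$ is a $p$-normed ($p$-Banach) polynomial $\left(\frac{n^n}{n!}\right)_{n=1}^\infty$-hyper-ideal (in particular, linear structure, $p$-norm, containment of finite-type polynomials and norm of $\widehat{I_n}$ are already verified), the only thing I need to establish in (a) is the two-sided ideal property with the stated constants $C_n=K_n=\tfrac{n^n}{n!}$.

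So fix $P\in\mathcal{P}^{\mathcal{H}}(^nE;F)$, $Q\in\mathcal{P}(^mG;E)$ and $R\in\mathcal{P}(^rF;H)$. Given $\varepsilon>0$, choose $A\in\mathcal{H}(^nE;F)$ with $\widehat{A}=P$ and $\|A\|_{\mathcal{H}}\le(1+\varepsilon)\|P\|_{\mathcal{P}^{\mathcal{H}}}$. First I would produce a multilinear lift of $P\circ Q$: set $\tilde{B}:=A\circ(\check{Q},\dots,\check{Q})\in\mathcal{L}(^{nm}G;F)$. The hyper-ideal property of $\mathcal{H}$ (with $m_1=m,\dots,m_n=nm$ and $t=\mathrm{id}_F$) gives $\tilde{B}\in\mathcal{H}(^{nm}G;F)$ together with
$$\|\tilde{B}\|_{\mathcal{H}}\le\|A\|_{\mathcal{H}}\cdot\|\check{Q}\|^n\le\left(\tfrac{m^m}{m!}\right)^n\|A\|_{\mathcal{H}}\cdot\|Q\|^n,$$
and clearly $\widehat{\tilde{B}}=P\circ Q$.

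Next I would produce a multilinear lift of the full composition $R\circ P\circ Q$. The natural candidate is $\Phi:=\check{R}\circ(\tilde{B},\dots,\tilde{B})\in\mathcal{L}(^{rnm}G;H)$: since $\check{R}\in\mathcal{L}_s(^rF;H)$ is symmetric and each factor $\tilde{B}$ already lies in $\mathcal{H}$, this is exactly a factorization of the form \eqref{factmeth}, so $\Phi\in\mathcal{L}\circ\mathcal{H}(^{rnm}G;H)$ with
$$\|\Phi\|_{\mathcal{L}\circ\mathcal{H}}\le\|\check{R}\|\cdot\|\tilde{B}\|_{\mathcal{H}}^{\,r}\le\tfrac{r^r}{r!}\|R\|\cdot\left(\tfrac{m^m}{m!}\right)^{rn}\|A\|_{\mathcal{H}}^{r}\cdot\|Q\|^{rn}.$$
Now I invoke the standing hypothesis $\mathcal{L}\circ\mathcal{H}\longhookrightarrow\mathcal{H}$ to conclude $\Phi\in\mathcal{H}(^{rnm}G;H)$ with the same bound (constant $1$). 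A direct computation shows $\widehat{\Phi}(x)=\check{R}(\widehat{\tilde{B}}(x),\dots,\widehat{\tilde{B}}(x))=R(P(Q(x)))$, so $\widehat{\Phi}=R\circ P\circ Q$ and hence $R\circ P\circ Q\in\mathcal{P}^{\mathcal{H}}(^{rnm}G;H)$ with
$$\|R\circ P\circ Q\|_{\mathcal{P}^{\mathcal{H}}}\le\|\Phi\|_{\mathcal{H}}\le\tfrac{r^r}{r!}\left(\tfrac{m^m}{m!}\right)^{rn}\|R\|\cdot(1+\varepsilon)^r\|P\|_{\mathcal{P}^{\mathcal{H}}}^{\,r}\cdot\|Q\|^{rn}.$$
Letting $\varepsilon\downarrow 0$ gives the two-sided ideal inequality with the prescribed constants.

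For (b), under the extra hypothesis that $\mathcal{H}$ is strongly symmetric and normed/Banach, Proposition \ref{corhip}(b) yields $\mathcal{P}^{\mathcal{H}}=\mathcal{P}_{\mathcal{H}}$ isometrically, so the two-sided ideal property transfers verbatim from (a) to $\mathcal{P}_{\mathcal{H}}$. The expected obstacle is not any single computation but rather the \emph{asymmetry} that blocked naive transfer earlier: the operator $A\circ(\check{Q},\dots,\check{Q})$ is generally not symmetric, which is precisely why I keep everything on the $\mathcal{P}^{\mathcal{H}}$-side and use $\mathcal{L}\circ\mathcal{H}\longhookrightarrow\mathcal{H}$ to absorb the symmetric outer factor $\check{R}$. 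This is the same structural point the authors flagged before the definition of strong symmetry, and it is what forces us to route (b) through (a) via Proposition \ref{corhip}.
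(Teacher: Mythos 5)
Your proof is correct and follows essentially the same route as the paper's: both hinge on the key step of placing the symmetric outer factor $\check R$ composed with $\mathcal{H}$-operators into $\mathcal{L}\circ\mathcal{H}\subseteq\mathcal{H}$ and then passing to the associated polynomial. The only cosmetic differences are bookkeeping: the paper reduces (a) to the $Q=\mathrm{id}$ case (deferring the $Q$-factor to Theorem \ref{hiph}) and proves (b) directly by symmetrizing $\check{R}\circ(\check{P},\ldots,\check{P})$ using strong symmetry, whereas you inline the $Q$-factor in (a) and transfer (b) from (a) via the isometric identification in Proposition \ref{corhip}(b) --- both yield the same constants.
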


\begin{proof}(a) Having Theorem \ref{hiph} in mind, we only need to show that if $P\in\mathcal{P}^\mathcal{H}(^nE;F)$ and $R\in\mathcal{P}(^rF;H)$ then $$R\circ P\in\mathcal{P}^\mathcal{H}(^{nr}E;H)\ \mbox{and}\ \|R\circ P\|_{\mathcal{P}^\mathcal{H}}\le\dfrac{r^r}{r!}\|R\|\cdot\|P\|_{\mathcal{P}^\mathcal{H}}^r.$$ For each $A\in\mathcal{H}(^nE;F)$ such that $\hat{A}=P$, it follows easily that
$R\circ P=(\check{R}\circ(A,\ldots,A))^{\wedge}$. Since $$\check{R}\circ(A,\ldots,A)\in\mathcal{L}\circ\mathcal{H}(^{nr}E;H)\subseteq\mathcal{H}(^{nr}E;H),$$ it follows that $R\circ P\in \mathcal{P}^\mathcal{H}(^{nr}E;H)$ and $$\|R\circ P\|_{\mathcal{P}^\mathcal{H}}\le\|\check{R}\circ(A,\ldots,A)\|_\mathcal{H}\le \|\check{R}\circ(A,\ldots,A)\|_{\mathcal{L}\circ\mathcal{H}}\le \|\check{R}\|\cdot\|A\|_\mathcal{H}^r \le\dfrac{r^r}{r!}\|R\|\cdot\|A\|_\mathcal{H}^r.$$ Take the infimum over all such multilinear operators $A$ to get the desired inequality.\\
(b) Having Proposition \ref{corhip}(b) in mind we only need to show that if $P\in\mathcal{P}_\mathcal{H}(^nE;F)$ and $R\in\mathcal{P}(^rF;H)$ then $$R\circ P\in\mathcal{P}_\mathcal{H}(^{nr}E;H)\ \mbox{with} \ \|R\circ P\|_{\mathcal{P}_\mathcal{H}}\le\dfrac{r^r}{r!}\|R\|\cdot\|P\|_{\mathcal{P}_\mathcal{H}}^r.$$
Considering that $\check{R}\circ(\check{P},\ldots,\check{P})\in\mathcal{L}\circ\mathcal{H}(^{nr}E;H)\subseteq\mathcal{H}(^{nr}E;H)$ and that
$\mathcal{H}$ is symmetric, we have $$(R\circ P)^{\vee}=(\check{R}\circ(\check{P},\ldots,\check{P}))_s\in\mathcal{H}(^{nr}E;H).$$ Thus $R\circ P\in\mathcal{P}^\mathcal{H}(^{nr}E;H)$ and
\begin{align*}\|R\circ P\|_{\mathcal{P}_\mathcal{H}}&=\|(R\circ P)^{\vee}\|_{\mathcal{H}}=\|(\check{R}\circ(\check{P},\ldots,\check{P}))_s\|_{\mathcal{H}} \le \dfrac{1}{nr!}\cdot\sum\limits_{\sigma\in S_{nr}}\|(\check{R}\circ(\check{P},\ldots,\check{P}))_\sigma\|_\mathcal{H}\\&= \dfrac{1}{nr!}\cdot\sum\limits_{\sigma\in S_{nr}}\|\check{R}\circ(\check{P},\ldots,\check{P})\|_\mathcal{H}= \|\check{R}\circ(\check{P},\ldots,\check{P})\|_\mathcal{H}\\&\le \|\check{R}\circ(\check{P},\ldots,\check{P})\|_{\mathcal{L}\circ\mathcal{H}}\le\|\check{R}\|\cdot\|\check{P}\|_\mathcal{H}^r\le \dfrac{r^r}{r!}\|R\|\cdot\|P\|_{\mathcal{P}_\mathcal{H}}^r.
\end{align*}\end{proof}

\section{The inequality method}\label{ineqmeth}

In this section and in the next one we provide two methods of generating polynomial hyper/two-sided ideal not passing through multilinear hyper-ideals. We also compare the resulting classes of polynomials with the classes generated by the procedure of Section \ref{multil}. In both cases we give concrete examples of polynomial hyper/two-sided ideals generated by such methods. These methods are polynomial adaptations of the methods introduced in \cite[Sections 2 and 3]{ewerton2}. Let $0 < p,q\leq1$.

\begin{definition}\label{dmdesp}\rm (a) By $\cal BAN$ we denote the class of all Banach spaces over $\mathbb{K}= \mathbb{R}$ or $\mathbb{C}$ and by $p{\rm-}{\cal BAN}$ the class of all $p$-Banach spaces over $\mathbb{K}$. A correspondence $\mathcal{X}\colon \mathcal{BAN} \longrightarrow p{\rm -}\mathcal{BAN}$ that associates to each Banach space $E$  a $p$-Banach space $(\mathcal{X}(E),\|\cdot\|_{\mathcal{X}(E)})$ is called a {\it $p$-sequence functor} if:\\
(i) $\mathcal{X}(E)$ is a linear subspace of $E^{\mathbb{N}}$  with the usual algebraic operations;\\
(ii) For all $x \in E$ and $j \in \mathbb{N}$, we have $(0,\ldots,0,x,0,\ldots)\in \mathcal{X}(E)$, where $x$ is placed at the $j$-th coordinate, and $\|(0,\ldots,0,x,0,\ldots)\|_{\mathcal{X}(E)}=\|x\|_E$.\\
(iii) For every $u\in \mathcal{L}(E;F)$ and every finite sequence $(x_j)_{j=1}^k := (x_1,\ldots,x_k,0,0,\ldots)$ in $E$, $$\|(u(x_j))_{j=1}^k\|_{\mathcal{X}(F)}\le \|u\|\cdot\|(x_j)_{j=1}^k\|_{\mathcal{X}(E)}.$$
When $p=1$ we simply say that $\cal X$ is a \textit{sequence functor}.\\
(b) A $p$-sequence functor $\mathcal{X}$ is \textit{scalarly dominated} by the $q$-sequence functor $\mathcal{Y}$ if, for every finite sequence $(\lambda_j)_{j=1}^k\subseteq\mathbb{K}$,  $\|(\lambda_j)_{j=1}^k\|_{\mathcal{X}(\mathbb{K})}\le \|(\lambda_j)_{j=1}^k\|_{\mathcal{Y}(\mathbb{K})}$.\\
(c) Let $\mathcal{X}$ be a $p$-sequence functor and $\mathcal{Y}$ be a $q$-sequence functor. We say that a $n$-homogenous polynomial $P\in\mathcal{P}(^nE;F)$ is {\it $(\mathcal{X};\mathcal{Y})$-summing} if there is a constant $C>0$ such that, for all finite sequences $(x_j)_{j=1}^k$ in $E$, we have \begin{equation} \label{eq:mdp}\|(P(x_j))_{j=1}^k\|_{\mathcal{Y}(F)}\le C \cdot \sup\limits_{R\in B_{\mathcal{P}(^nE)}}\|(R(x_j))_{j=1}^k\|_{\mathcal{X}(\mathbb{K})}.\end{equation} In this case we write $P\in\mathcal{P(X;Y)}(^nE;F)$ and define $$\|P\|_{\mathcal{P(X;Y)}}=\inf\{C>0 : C\ \mbox{satisfies} \ (\ref{eq:mdp})\}.$$\end{definition}

For a number of examples of sequence functors, see \cite[Example 2.2]{ewerton2}.

\begin{teo}\label{teometdes} Let $\mathcal{X}$ be a $p$-sequence functor and $\mathcal{Y}$ be a $q$-sequence functor with $\mathcal{Y}$ scalarly dominated by $\mathcal{X}$. Then the class $\mathcal{P(X;Y)}$ is a $q$-Banach polynomial hyper-ideal.\end{teo}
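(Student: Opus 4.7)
The plan is to verify each of the defining conditions of a $q$-Banach polynomial hyper-ideal (Definition \ref{dhip}(a)) and then invoke the Series Criterion (Theorem \ref{cs}(a)) for completeness. First I would check that each component $\mathcal{P(X;Y)}(^nE;F)$ is a linear subspace of $\mathcal{P}(^nE;F)$ on which $\|\cdot\|_{\mathcal{P(X;Y)}}$ is a $q$-norm; this transfers routinely from the $q$-Banach structure of $\mathcal{Y}(F)$ to the infimum-based norm. Taking $k=1$ in (\ref{eq:mdp}) and applying property (ii) of Definition \ref{dmdesp} yields the useful inequality $\|\cdot\|\le\|\cdot\|_{\mathcal{P(X;Y)}}$, which in particular locates the components inside the complete space $\mathcal{P}(^nE;F)$. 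The heart of the argument is the hyper-ideal property. Given $P\in\mathcal{P(X;Y)}(^nE;F)$, $Q\in\mathcal{P}(^mG;E)$, $t\in\mathcal{L}(F;H)$ and a finite sequence $(g_j)_{j=1}^k$ in $G$, property (iii) of $\mathcal{Y}$ applied to $t$ gives $\|(t(P(Q(g_j))))_{j=1}^k\|_{\mathcal{Y}(H)}\le \|t\|\cdot \|(P(Q(g_j)))_{j=1}^k\|_{\mathcal{Y}(F)}$. Setting $x_j:=Q(g_j)$ and using the summing inequality for $P$ reduces matters to controlling $\sup_{R\in B_{\mathcal{P}(^nE)}}\|(R(Q(g_j)))_{j=1}^k\|_{\mathcal{X}(\mathbb{K})}$. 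The key observation is that, when $Q\ne 0$, each polynomial $R\circ Q/\|Q\|^n$ lies in $B_{\mathcal{P}(^{mn}G)}$, producing a factor of $\|Q\|^n$ and turning the supremum into one over $B_{\mathcal{P}(^{mn}G)}$; the case $Q=0$ is trivial. Altogether this yields $t\circ P\circ Q\in\mathcal{P(X;Y)}(^{mn}G;H)$ with $\|t\circ P\circ Q\|_{\mathcal{P(X;Y)}}\le \|t\|\cdot\|P\|_{\mathcal{P(X;Y)}}\cdot\|Q\|^n$, as required with $C_m=1$.

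Next I would establish that $\widehat{I_n}\in\mathcal{P(X;Y)}(^n\mathbb{K};\mathbb{K})$ with $\|\widehat{I_n}\|_{\mathcal{P(X;Y)}}=1$. The identification $\mathcal{P}(^n\mathbb{K})\cong\mathbb{K}$ via $c\mapsto(\lambda\mapsto c\lambda^n)$ reduces the relevant supremum to $\sup_{R\in B_{\mathcal{P}(^n\mathbb{K})}}\|(R(\lambda_j))_{j=1}^k\|_{\mathcal{X}(\mathbb{K})}=\|(\lambda_j^n)_{j=1}^k\|_{\mathcal{X}(\mathbb{K})}$, and the scalar-domination hypothesis then delivers the upper bound $\|\widehat{I_n}\|_{\mathcal{P(X;Y)}}\le 1$. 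Taking $k=1$, $\lambda_1=1$ and invoking property (ii) of sequence functors supplies the matching lower bound. With this in hand, every finite-type polynomial lies in $\mathcal{P(X;Y)}$: the factorization $\varphi^n\otimes y=(1\otimes y)\circ\widehat{I_n}\circ\varphi$ combined with the already-established hyper-ideal property does the job.

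For completeness I would verify the series-convergence hypothesis of Theorem \ref{cs}(a). Given $(P_l)_{l=1}^\infty\subseteq\mathcal{P(X;Y)}(^nE;F)$ with $\sum_l\|P_l\|_{\mathcal{P(X;Y)}}^q<\infty$, the bound $\|\cdot\|\le\|\cdot\|_{\mathcal{P(X;Y)}}$ forces the partial sums $S_N=\sum_{l=1}^N P_l$ to converge uniformly to some $P\in\mathcal{P}(^nE;F)$. For any finite sequence $(x_j)_{j=1}^k$ in $E$, the $q$-norm on $\mathcal{Y}(F)$ yields $\|(S_N(x_j))_{j=1}^k\|_{\mathcal{Y}(F)}^q\le \sum_{l=1}^N\|P_l\|_{\mathcal{P(X;Y)}}^q\cdot\bigl(\sup_{R\in B_{\mathcal{P}(^nE)}}\|(R(x_j))_{j=1}^k\|_{\mathcal{X}(\mathbb{K})}\bigr)^q$, and the estimate $\|(S_N(x_j)-P(x_j))_{j=1}^k\|_{\mathcal{Y}(F)}^q\le\sum_{j=1}^k\|S_N(x_j)-P(x_j)\|^q\to 0$ (from properties (i) and (ii) of sequence functors) allows passage to the limit, giving $\|P\|_{\mathcal{P(X;Y)}}^q\le\sum_l\|P_l\|_{\mathcal{P(X;Y)}}^q$. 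I expect the main obstacle to be the bookkeeping: deploying the scalar-domination hypothesis at exactly the right moment so that $\|\widehat{I_n}\|_{\mathcal{P(X;Y)}}=1$, and organizing the estimates so that the hyper-ideal constants come out as $C_m=1$ rather than something larger.
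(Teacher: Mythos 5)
Your proof is correct and follows essentially the same approach as the paper: the central hyper-ideal estimate, namely rewriting the supremum over $B_{\mathcal{P}(^nE)}$ as $\|Q\|^n$ times a supremum over the normalized polynomials $R\circ Q/\|Q\|^n \in B_{\mathcal{P}(^{mn}G)}$, is precisely the paper's argument. The paper writes out only this hyper-ideal step and refers the remaining verifications (the $q$-norm structure, $\|\widehat{I_n}\|_{\mathcal{P(X;Y)}}=1$ via scalar domination, containment of finite-type polynomials, and the series criterion for completeness) to the multilinear analogue in an earlier work, while you spell them out explicitly.
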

\begin{proof} We shall just check the hyper-ideal property (the remaining conditions follow the same steps of the proof of \cite[Theorem 2.6]{ewerton2}).
Let $P\in\mathcal{P(X;Y)}(^nE;F)$, $Q\in\mathcal{P}(^mG;F)$ and $t\in\mathcal{L}(F;H)$. Of course we can assume $Q\neq0$. For every finite sequence $(x_j)_{j=1}^k$ in $G$, \begin{align*}\|(t\circ P\circ Q(x_j))&_{j=1}^k\|_{\mathcal{Y}(H)}=\|(t(P(Q(x_j))))_{j=1}^k\|_{\mathcal{Y}(H)}\le \|t\|\cdot\|((P(Q(x_j))))_{j=1}^k\|_{\mathcal{Y}(F)}\\&\le \|t\|\cdot\|P\|_{\mathcal{P(X;Y)}}\cdot\sup\limits_{R\in B_{\mathcal{P}(^nE)}}\|(R(Q(x_j)))_{j=1}^k\|_{\mathcal{X}(\mathbb{K})}\\&=
\|t\|\cdot\|P\|_{\mathcal{P(X;Y)}}\cdot \|Q\|^n\cdot\sup\limits_{R\in B_{\mathcal{P}(^nE)}}\an\ap\ap \dfrac{R(Q(x_j))}{\|Q\|^n}\fp\fp_{j=1}^k\fn_{\mathcal{X}(\mathbb{K})}\\&
\leq \|t\|\cdot\|P\|_{\mathcal{P(X;Y)}}\cdot\|Q\|^n\cdot\sup\limits_{S\in B_{\mathcal{P}(^{mn}G)}}\|((S(x_j)))_{j=1}^k\|_{\mathcal{X}(\mathbb{K})}.
\end{align*}
Thus $t\circ P\circ Q\in\mathcal{P(X;Y)}(^{mn}G;H)$ and $\|t\circ P\circ Q\|_{\mathcal{P(X;Y)}}\le\|t\|\cdot\|P\|_{\mathcal{P(X;Y)}}\cdot\|Q\|^n$.\end{proof}

Let $\mathcal{(X;Y)}$ be the multilinear hyper-ideal determined by the sequence functors $\cal X$ and $\cal Y$ (cf. \cite[Definition 2.4]{ewerton2}).
Our next goal is to compare the class $\mathcal{P(X;Y)}$ with the classes $\mathcal{P}_{\mathcal{(X;Y)}}$ and $\mathcal{P}^{\mathcal{(X;Y)}}$.

\begin{lema}\label{lemafortsimet} Let $\mathcal{X}$ be a $p$-sequence functor scalarly dominated by the $q$-sequence functor $\mathcal{Y}$. Then the $q$-Banach hyper-ideal $(\mathcal{X};\mathcal{Y})$ is strongly symmetric.\end{lema}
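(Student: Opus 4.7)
The plan is to unravel the defining inequality of the multilinear hyper-ideal $(\mathcal{X};\mathcal{Y})$ and show that both sides are invariant under permutations of the coordinate arguments, so that the seminorm is permutation-invariant. Concretely, recall from \cite[Definition 2.4]{ewerton2} that $A\in\mathcal{L}(^nE;F)$ belongs to $(\mathcal{X};\mathcal{Y})(^nE;F)$ when there exists $C>0$ with
$$\|(A(x_j^{(1)},\ldots,x_j^{(n)}))_{j=1}^k\|_{\mathcal{Y}(F)}\le C\cdot \sup_{B\in B_{\mathcal{L}(^nE)}}\|(B(x_j^{(1)},\ldots,x_j^{(n)}))_{j=1}^k\|_{\mathcal{X}(\mathbb{K})}$$
for all finite sequences $(x_j^{(i)})_{j=1}^k\subseteq E$, $i=1,\ldots,n$, and $\|A\|_{(\mathcal{X};\mathcal{Y})}$ is the infimum of such constants $C$.

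First I would fix $A\in(\mathcal{X};\mathcal{Y})(^nE;F)$ and $\sigma\in S_n$, and simply compute. For arbitrary finite sequences $(x_j^{(1)})_{j=1}^k,\ldots,(x_j^{(n)})_{j=1}^k$ in $E$, the relabeling $y_j^{(i)}:=x_j^{(\sigma(i))}$ yields
$$\|(A_\sigma(x_j^{(1)},\ldots,x_j^{(n)}))_{j=1}^k\|_{\mathcal{Y}(F)}=\|(A(y_j^{(1)},\ldots,y_j^{(n)}))_{j=1}^k\|_{\mathcal{Y}(F)}\le \|A\|_{(\mathcal{X};\mathcal{Y})}\cdot\sup_{B\in B_{\mathcal{L}(^nE)}}\|(B(y_j^{(1)},\ldots,y_j^{(n)}))_{j=1}^k\|_{\mathcal{X}(\mathbb{K})}.$$

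The key step is then to recognize that the supremum on the right is symmetric in the permutation. Indeed, $B(y_j^{(1)},\ldots,y_j^{(n)})=B(x_j^{(\sigma(1))},\ldots,x_j^{(\sigma(n))})=B_\sigma(x_j^{(1)},\ldots,x_j^{(n)})$, and since $\|B_\sigma\|=\|B\|$, the map $B\mapsto B_\sigma$ is a bijection of $B_{\mathcal{L}(^nE)}$ onto itself. Hence
$$\sup_{B\in B_{\mathcal{L}(^nE)}}\|(B_\sigma(x_j^{(1)},\ldots,x_j^{(n)}))_{j=1}^k\|_{\mathcal{X}(\mathbb{K})}=\sup_{B\in B_{\mathcal{L}(^nE)}}\|(B(x_j^{(1)},\ldots,x_j^{(n)}))_{j=1}^k\|_{\mathcal{X}(\mathbb{K})}.$$
Combining the two displays shows $A_\sigma\in(\mathcal{X};\mathcal{Y})(^nE;F)$ with $\|A_\sigma\|_{(\mathcal{X};\mathcal{Y})}\le\|A\|_{(\mathcal{X};\mathcal{Y})}$.

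For the reverse inequality, I would apply the same argument with $\sigma$ replaced by $\sigma^{-1}$ and $A$ replaced by $A_\sigma$, using $(A_\sigma)_{\sigma^{-1}}=A$. The whole argument is short; the only mild pitfall is keeping the permutation indices straight (so that the substitution $y_j^{(i)}=x_j^{(\sigma(i))}$ matches the definition of $B_\sigma$ rather than $B_{\sigma^{-1}}$), and noticing that invariance of the unit ball of $\mathcal{L}(^nE)$ under $B\mapsto B_\sigma$ is what makes the supremum permutation-invariant.
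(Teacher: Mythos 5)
Your proof is correct and follows essentially the same route as the paper's: apply the defining inequality of $(\mathcal{X};\mathcal{Y})$ to the permuted tuples and use that $B\mapsto B_\sigma$ is a norm-preserving bijection of the unit ball of $\mathcal{L}(^nE)$ so the supremum is permutation-invariant, then obtain the reverse inequality via $\sigma^{-1}$. The only difference is cosmetic: you introduce the relabeling $y_j^{(i)}=x_j^{(\sigma(i))}$ explicitly, whereas the paper writes the permuted arguments directly inside the supremum.
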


\begin{proof} Let $A \in (\mathcal{X};\mathcal{Y})(^nE;F)$,
$\sigma\in S_n$ and $(x_j^1)_{j=1}^k,\ldots,(x_j^n)_{j=1}^k$ be finite sequences in $E$. Using that $\|T\| = \|T_{\sigma}\|$ for every  $T\in {\mathcal{L}(^nE;F)}$, we have
\begin{eqnarray*}\|(A_\sigma(x_j^1,\ldots,x_j^n))_{j=1}^k\|_{\mathcal{Y}(F)}&\leq&
\|A\|_{(\mathcal{X};\mathcal{Y})}\cdot\sup\limits_{T\in B_{\mathcal{L}(^nE;F)}} \|(T(x_j^{\sigma(1)},\ldots,x_j^{\sigma(n)}))_{j=1}^k\|_{\mathcal{X}(\mathbb{K})}\\&=& \|A\|_{(\mathcal{X};\mathcal{Y})}\cdot\sup\limits_{T\in B_{\mathcal{L}(^nE;F)}}\|(T(x_j^{1},\ldots,x_j^n))_{j=1}^k\|_{\mathcal{X}(\mathbb{K})}.\end{eqnarray*}
Thus $A_\sigma\in(\mathcal{X};\mathcal{Y})(^nE;F)$ and $\|A_\sigma\|_{(\mathcal{X};\mathcal{Y})}\le\|A\|_{(\mathcal{X};\mathcal{Y})}$.
Since $A=A_{\sigma\circ\sigma^{-1}}=(A_\sigma)_{\sigma^{-1}}$, from what we have just proved it follows that  $$\|A\|_{(\mathcal{X};\mathcal{Y})}=\|(A_\sigma)_{\sigma^{-1}}\|_{(\mathcal{X};\mathcal{Y})}\le\|A_\sigma\|_{(\mathcal{X};\mathcal{Y})}.$$ Therefore $\|A\|_{(\mathcal{X};\mathcal{Y})}=\|A_\sigma\|_{(\mathcal{X};\mathcal{Y})}$.\end{proof}

\begin{prop}\label{propimp}Let $\mathcal{X}$ be a $p$-sequence functor and $\mathcal{Y}$ be a $q$-sequence functor scalarly dominated by $\mathcal{X}$. Then
$\mathcal{P}_{\mathcal{(X;Y)}} = \mathcal{P}^{\mathcal{(X;Y)}}\subseteq\mathcal{P(X;Y)}$ and $$\|\cdot\|_{\mathcal{P(X;Y)}}\le\|\cdot\|_{\mathcal{P}^{\mathcal{(X;Y)}}}\le\|\cdot\|_{\mathcal{P}_{\mathcal{(X;Y)}}}.$$\end{prop}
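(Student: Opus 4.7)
The plan is to split the proposition into three independent claims and tackle each in turn, leveraging results already established in the excerpt so that each piece is short.

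First, to get the equality $\mathcal{P}_{(\mathcal{X};\mathcal{Y})}=\mathcal{P}^{(\mathcal{X};\mathcal{Y})}$, I would invoke Lemma \ref{lemafortsimet}, which says the multilinear hyper-ideal $(\mathcal{X};\mathcal{Y})$ is strongly symmetric (hence symmetric), and then quote Lemma \ref{lemasim} to conclude the equality of the two generated polynomial classes. This step is purely formal.

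Next, the inequality $\|P\|_{\mathcal{P}^{(\mathcal{X};\mathcal{Y})}}\le\|P\|_{\mathcal{P}_{(\mathcal{X};\mathcal{Y})}}$ is the easy half of Proposition \ref{corhip}(a): for $P\in\mathcal{P}_{(\mathcal{X};\mathcal{Y})}$, the symmetric multilinear operator $\check P$ already lies in $(\mathcal{X};\mathcal{Y})$ and satisfies $\widehat{\check P}=P$, so it is an admissible candidate in the infimum defining $\|P\|_{\mathcal{P}^{(\mathcal{X};\mathcal{Y})}}$. Hence $\|P\|_{\mathcal{P}^{(\mathcal{X};\mathcal{Y})}}\le\|\check P\|_{(\mathcal{X};\mathcal{Y})}=\|P\|_{\mathcal{P}_{(\mathcal{X};\mathcal{Y})}}$. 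I would record this in a single sentence.

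The substantive step is showing $\mathcal{P}^{(\mathcal{X};\mathcal{Y})}\subseteq\mathcal{P(X;Y)}$ together with $\|P\|_{\mathcal{P(X;Y)}}\le\|P\|_{\mathcal{P}^{(\mathcal{X};\mathcal{Y})}}$. Fix $P\in\mathcal{P}^{(\mathcal{X};\mathcal{Y})}({}^nE;F)$ and any $A\in(\mathcal{X};\mathcal{Y})({}^nE;F)$ with $\widehat{A}=P$. Given a finite sequence $(x_j)_{j=1}^k$ in $E$, I would apply the $(\mathcal{X};\mathcal{Y})$-summing inequality to $A$ with each argument taken to be $(x_j)_{j=1}^k$, obtaining
\begin{equation*}
\|(P(x_j))_{j=1}^k\|_{\mathcal{Y}(F)}=\|(A(x_j,\ldots,x_j))_{j=1}^k\|_{\mathcal{Y}(F)}\le\|A\|_{(\mathcal{X};\mathcal{Y})}\cdot\sup_{T\in B_{\mathcal{L}({}^nE)}}\|(T(x_j,\ldots,x_j))_{j=1}^k\|_{\mathcal{X}(\mathbb{K})}.
\end{equation*}
The key (and essentially only) observation is that the map $T\mapsto\widehat{T}$ sends $B_{\mathcal{L}({}^nE)}$ into $B_{\mathcal{P}({}^nE)}$, because $\|\widehat{T}\|\le\|T\|$; rewriting $T(x_j,\ldots,x_j)=\widehat{T}(x_j)$, the supremum on the right is therefore dominated by $\sup_{R\in B_{\mathcal{P}({}^nE)}}\|(R(x_j))_{j=1}^k\|_{\mathcal{X}(\mathbb{K})}$. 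This yields $P\in\mathcal{P(X;Y)}({}^nE;F)$ with $\|P\|_{\mathcal{P(X;Y)}}\le\|A\|_{(\mathcal{X};\mathcal{Y})}$, and taking the infimum over all admissible $A$ gives the required inequality.

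I do not expect any serious obstacle; the only subtle point is orienting the passage between multilinear and polynomial unit balls in the correct direction, namely that $\|\widehat{T}\|\le\|T\|$ (not the reverse $\|\check{R}\|\le\frac{n^n}{n!}\|R\|$) is what is needed here, so that the multilinear summing test function set is \emph{smaller} than the polynomial one.
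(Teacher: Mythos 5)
Your proposal is correct and follows essentially the same route as the paper: strong symmetry of $(\mathcal{X};\mathcal{Y})$ from Lemma \ref{lemafortsimet} gives the set equality, the inequality $\|\cdot\|_{\mathcal{P}^{(\mathcal{X};\mathcal{Y})}}\le\|\cdot\|_{\mathcal{P}_{(\mathcal{X};\mathcal{Y})}}$ is immediate from the definitions, and the inclusion into $\mathcal{P(X;Y)}$ comes from the same two observations the paper uses, namely diagonalizing $A$ at $(x_j,\dots,x_j)$ and noting that $T\mapsto\widehat{T}$ maps $B_{\mathcal{L}(^nE)}$ into $B_{\mathcal{P}(^nE)}$, followed by taking the infimum over admissible $A$. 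The only cosmetic difference is that you cite Lemma \ref{lemasim} directly for the set equality where the paper quotes Proposition \ref{corhip}; the content is the same.
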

\begin{proof} The equality $\mathcal{P}_{\mathcal{(X;Y)}} = \mathcal{P}^{\mathcal{(X;Y)}}$ follows from Lemma \ref{lemafortsimet} and Proposition \ref{corhip}. The inequality $\|\cdot\|_{\mathcal{P^{(X;Y)}}}\le\|\cdot\|_{\mathcal{P}_{\mathcal{(X;Y)}}}$ is obvious. Given $P\in\mathcal{P}^{\mathcal{(X;Y)}}(^nE;F)$, there exists $A\in\mathcal{(X;Y)}(^nE;F)$ such that $\widehat{A} = P$. For $x_1, \ldots, x_k \in E$, since $\widehat{T}\in B_{\mathcal{P}(^nE)}$ for every $T\in B_{\mathcal{L}(^nE)}$, we have
\begin{eqnarray*}\|(P(x_j))_{j=1}^k\|_{\mathcal{Y}(F)}&=&\|(A(x_j,\ldots,x_j))_{j=1}^k\|_{\mathcal{Y}(F)}
\\&\le& \|A\|_{\mathcal{(X;Y)}}\cdot\sup\limits_{T\in B_{\mathcal{L}(^nE)}} \|(T(x_j,\ldots,x_j))_{j=1}^k\|_{\mathcal{X}(\mathbb{K})}\\&=&\|A\|_{\mathcal{(X;Y)}}\cdot\sup\limits_{T\in B_{\mathcal{L}(^nE)}} \|(\widehat{T}(x_j))_{j=1}^k\|_{\mathcal{X}(\mathbb{K})} \\&\le& \|A\|_{\mathcal{(X;Y)}}\cdot\sup\limits_{R\in B_{\mathcal{P}(^nE)}} \|(R(x_j))_{j=1}^k\|_{\mathcal{X}(\mathbb{K})},
\end{eqnarray*}
Therefore $P\in\mathcal{P(X;Y)}$ and $\|P\|_{\mathcal{P(X;Y)}}\le \|A\|_{\mathcal{(X;Y)}}$. Taking the infimum over all multilinear operators $A$ in $\mathcal{(X;Y)}$ such that $\widehat{A} = P$ we get $\|P\|_{\mathcal{P(X;Y)}}\le\|P\|_{\mathcal{P}^{\mathcal{(X;Y)}}}.$\end{proof}

For $p >0$, by $\ell_p(\cdot)$ we denote the $p$-sequence functor $E \mapsto \ell_p(E)$.

\begin{ex}\rm Let us see that the inclusion in the proposition above is strict. Consider the sequences functors  $\mathcal{X}=\mathcal{Y}=\ell_1(\cdot)$ and the 2-homogenous polynomial $$P\colon \ell_2\longrightarrow\ell_2\widehat{\otimes}_\pi\ell_2~,~P(x)=x\otimes x.$$ Noticing that $\check{P}(x,y)=\frac{x\otimes y + y\otimes x}{2}$, what was proved in \cite[Example 3.4]{dimant} means, in our notation, that $P\in\mathcal{P}{(\ell_1(\cdot);\ell_1(\cdot))}(^2\ell_2;\ell_2\widehat{\otimes}_\pi\ell_2)$ but $\check{P}\notin(\ell_1(\cdot);\ell_1(\cdot))(^2\ell_2;\ell_2\widehat{\otimes}_\pi\ell_2)$. Therefore $ \mathcal{P}_{{(\ell_1(\cdot);\ell_1(\cdot))}} \varsubsetneqq\mathcal{P}(\ell_1(\cdot);\ell_1(\cdot))$.\end{ex}

Next we show that the inequality method recovers, as a particular instance, the following important class of polynomials introduced by V. Dimant \cite[Definition 3.1]{dimant}:

\begin{definition}\rm A polynomial $P\in\mathcal{P}(^nE;F)$ is said to be {\it strongly $p$-summing}, $p > 0$, if there is a constant $C>0$ such that, for every finite sequence $(x_j)_{j=1}^k$ in $E$, \begin{equation} \label{eq:sspp}\ap\sum\limits_{j=1}^k\|P(x_j)\|^p\fp^{1/p}\le C\cdot \sup\limits_{q\in B_{\mathcal{P}(^nE)}}\ap\sum\limits_{j=1}^k|q(x_j)|^p\fp^{1/p}.\end{equation} In this case we write $P\in\mathcal{P}_{ss}^p(^nE;F)$ and define $\|P\|_{\mathcal{P}_{ss}^p}=\inf\{C :  C\ \mbox{satisfies} \ (\ref{eq:sspp})\}.$\end{definition}

\begin{prop} $\mathcal{P}_{ss}^p$ is a $p$-Banach polynomial hyper-ideal if $0<p<1$ and a Banach polynomial hyper-ideal if $p\ge1$.\end{prop}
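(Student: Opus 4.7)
The strategy is to realize $\mathcal{P}_{ss}^p$ as an instance of the inequality method $\mathcal{P}(\mathcal{X};\mathcal{Y})$ of Definition \ref{dmdesp}(c), with the specific choice $\mathcal{X} = \mathcal{Y} = \ell_p(\cdot)$, and then invoke Theorem \ref{teometdes}. The argument reduces to two essentially bookkeeping verifications.

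First, I would check that $E \mapsto \ell_p(E)$, equipped with its standard $p$-norm $\|\cdot\|_p$ when $0<p\le 1$ or its usual Banach norm when $p\ge 1$, is a $p$-sequence functor (respectively, a $1$-sequence functor) in the sense of Definition \ref{dmdesp}(a). Conditions (i) and (ii) are built into the very definition of $\ell_p(E)$: it is a linear subspace of $E^{\mathbb{N}}$ and finitely supported sequences lie in it with the expected coordinate norms. Condition (iii) is the classical estimate $\|(u(x_j))_{j=1}^k\|_p \le \|u\|\cdot\|(x_j)_{j=1}^k\|_p$ for every $u\in\mathcal{L}(E;F)$, which follows at once from $\|u(x_j)\|\le\|u\|\cdot\|x_j\|$ applied coordinatewise.

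Second, I would compare the inequality (\ref{eq:sspp}) defining $\mathcal{P}_{ss}^p$ with the inequality (\ref{eq:mdp}) defining $\mathcal{P}(\mathcal{X};\mathcal{Y})$: taking $\mathcal{X}=\mathcal{Y}=\ell_p(\cdot)$, the two coincide verbatim, so $\mathcal{P}_{ss}^p = \mathcal{P}(\ell_p(\cdot);\ell_p(\cdot))$ isometrically. Scalar domination in the sense of Definition \ref{dmdesp}(b) is trivial here, since $\mathcal{X}=\mathcal{Y}$ implies $\|(\lambda_j)_{j=1}^k\|_{\mathcal{X}(\mathbb{K})} = \|(\lambda_j)_{j=1}^k\|_{\mathcal{Y}(\mathbb{K})}$. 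Theorem \ref{teometdes} then concludes the proof: for $0<p<1$ we obtain a $p$-Banach polynomial hyper-ideal, and for $p\ge 1$ we obtain a Banach (i.e., $1$-Banach) polynomial hyper-ideal.

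There is essentially no obstacle: the whole point of the inequality method developed in this section is precisely to subsume classes of the form $\mathcal{P}_{ss}^p$. The only care needed is to make sure the quasi-normed structure of $\ell_p(\cdot)$ for $0<p<1$ slots correctly into the $p$-sequence functor framework, which it does by design.
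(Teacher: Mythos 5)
Your proof is correct and follows exactly the paper's approach: the paper's entire proof is the one-line instruction to apply Theorem \ref{teometdes} with $\mathcal{X}(E)=\mathcal{Y}(E)=\ell_p(E)$, and you have simply spelled out the routine verifications (that $\ell_p(\cdot)$ is a $p$-sequence functor for $0<p<1$ and a $1$-sequence functor for $p\ge 1$, that scalar domination is trivial when $\mathcal{X}=\mathcal{Y}$, and that (\ref{eq:sspp}) is (\ref{eq:mdp}) for this choice) that the paper leaves implicit.
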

\begin{proof}Apply Theorem \ref{teometdes} with ${\cal X}(E) = {\cal Y}(E) = \ell_p(E)$.\end{proof}

It is a natural question whether the inequality method, under stronger assumptions, generate polynomial two-sided ideals. Let us see that, unfortunately, the answer to this question is unsatisfactory. Given $P\in\mathcal{P}(\mathcal{X};\mathcal{Y})(^nE;F)$ and $R\in\mathcal{P}(^rF;H)$, in order to prove, under the assumptions of Theorem \ref{teometdes}, that $R \circ P$ belongs to $\mathcal{P}{\mathcal{(X;Y)}}$, the following two conditions must be satisfied:

\medskip

\noindent (C1) If $Q \in\mathcal{P}(^nE;F)$, $k\in\mathbb{N}$ and $x_1, \ldots, x_k \in E$, then $$\|(Q(x_j))_{j=1}^k\|_{\mathcal{Y}(F)}\le \|Q\|\cdot\|(x_j)_{j=1}^k\|_{\mathcal{Y}(E)}^n.$$
(C2) If $k\in\mathbb{N}$ and $\lambda_1, \ldots, \lambda_k \in \mathbb{K}$, then
\begin{equation*}\|(\lambda_j)_{j=1}^k\|_{\mathcal{X}(\mathbb{K})}^r \leq \|(\lambda_j^r)_{j=1}^k\|_{\mathcal{X}(\mathbb{K})}.\label{eq:desind}\end{equation*}

It is true that, supposing (C1) and (C2), $\mathcal{P}{\mathcal{(X;Y)}}$ is a polynomial Banach two-sided ideal. We do not formalize this because the examples of sequence functors satisfying condition (C2) we are aware of lead to a trivial situation. In fact, the only usual sequence functor norm satisfying condition (C2) is the supremum norm. But this case is highly uninteresting:

\begin{prop}Let $\mathcal{X}$ and $\mathcal{Y}$ be sequence functors with $\mathcal{Y}$ scalarly dominated by $\mathcal{X}$. If  $\|(x_j)_{j=1}^k\|_{\mathcal{Y}(E)}=\sup\limits_{j=1,\ldots,k}\|x_j\|$ for all $E,k$ and $x_1, \ldots, x_k \in E$, then $\mathcal{P}(\mathcal{X};\mathcal{Y})(^nE;F)=\mathcal{P}(^nE;F)$ isometrically for all $E,F$ and $n$.\end{prop}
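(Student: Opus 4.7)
The plan is to show that the inequality defining $(\mathcal{X};\mathcal{Y})$-summability is trivially satisfied by every continuous $n$-homogeneous polynomial with the optimal constant being $\|P\|$, so that the class collapses to all of $\mathcal{P}(^nE;F)$ isometrically.

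First I would note that one direction is immediate: Theorem \ref{teometdes} says $\mathcal{P}(\mathcal{X};\mathcal{Y})$ is a $q$-Banach polynomial hyper-ideal, so Remark \ref{obssmi}(i) (namely $\|\cdot\|\le\|\cdot\|_{\mathcal{P}(\mathcal{X};\mathcal{Y})}$) already gives the inclusion $\mathcal{P}(\mathcal{X};\mathcal{Y})(^nE;F)\subseteq\mathcal{P}(^nE;F)$ with $\|P\|\le\|P\|_{\mathcal{P}(\mathcal{X};\mathcal{Y})}$ for every $P$ in the class. Thus the whole content is to prove the reverse: every $P\in\mathcal{P}(^nE;F)$ satisfies inequality (\ref{eq:mdp}) with constant $\|P\|$.

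Next I would fix $P\in\mathcal{P}(^nE;F)$ and a finite sequence $x_1,\dots,x_k\in E$. Using the hypothesis on $\mathcal{Y}$, the left-hand side of (\ref{eq:mdp}) reduces to a plain sup norm, so
\[
\|(P(x_j))_{j=1}^k\|_{\mathcal{Y}(F)}=\max_{1\le j\le k}\|P(x_j)\|\le \|P\|\cdot\max_{1\le j\le k}\|x_j\|^n.
\]
The task is therefore to bound $\max_j\|x_j\|^n$ by $\sup_{R\in B_{\mathcal{P}(^nE)}}\|(R(x_j))_{j=1}^k\|_{\mathcal{X}(\mathbb{K})}$. For this, pick $j_0$ with $\|x_{j_0}\|=\max_j\|x_j\|$ and, by Hahn–Banach, choose $\varphi\in E'$ with $\|\varphi\|=1$ and $\varphi(x_{j_0})=\|x_{j_0}\|$. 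The polynomial $R:=\varphi^n$ lies in the unit ball of $\mathcal{P}(^nE)$, and using first that $\mathcal{Y}$ is scalarly dominated by $\mathcal{X}$ and then the assumption on $\mathcal{Y}$,
\[
\|(R(x_j))_{j=1}^k\|_{\mathcal{X}(\mathbb{K})}\ge \|(R(x_j))_{j=1}^k\|_{\mathcal{Y}(\mathbb{K})}=\max_{j}|\varphi(x_j)|^n\ge|\varphi(x_{j_0})|^n=\|x_{j_0}\|^n=\max_{j}\|x_j\|^n.
\]

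Combining the two displays yields $\|(P(x_j))_{j=1}^k\|_{\mathcal{Y}(F)}\le \|P\|\cdot\sup_{R\in B_{\mathcal{P}(^nE)}}\|(R(x_j))_{j=1}^k\|_{\mathcal{X}(\mathbb{K})}$, so $P\in\mathcal{P}(\mathcal{X};\mathcal{Y})(^nE;F)$ with $\|P\|_{\mathcal{P}(\mathcal{X};\mathcal{Y})}\le\|P\|$; together with the reverse inequality from the first paragraph this gives the isometric identification. There is no real obstacle: the only nontrivial step is the clever choice of the rank–one witness $R=\varphi^n$, which exploits both Hahn–Banach on $E$ and the fact that the $\mathcal{Y}$-norm on scalars is the sup norm, allowing a single $R$ peaked at the largest $x_{j_0}$ to beat the maximum of $\|x_j\|^n$.
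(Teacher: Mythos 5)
Your proof is correct and follows essentially the same route as the paper: both reduce the left-hand side to $\sup_j\|P(x_j)\|\le\|P\|\sup_j\|x_j\|^n$, then use Hahn--Banach to realize $\|x\|^n$ via a rank-one polynomial $\varphi^n\in B_{\mathcal P(^nE)}$, and finally pass from the $\mathcal Y$-scalar norm to the $\mathcal X$-scalar norm by scalar domination. The only cosmetic difference is that you single out a maximizing index $j_0$ and a concrete witness $\varphi^n$, whereas the paper keeps everything at the level of suprema and interchanges $\sup_j\sup_q=\sup_q\sup_j$; the underlying argument is identical.
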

\begin{proof}Let $P\in\mathcal{P}(^nE;F)$. By the Hahn-Banach Theorem it follows that
$$\|P(x)\|\le\|P\|\cdot\sup\limits_{q\in B_{\mathcal{P}(^nE)}}|q(x)|$$
for every $x \in E$. Then, given $x_1, \ldots, x_k \in E$, \begin{align*}\|(P(x_j))_{j=1}^k\|_{\mathcal{Y}(F)}&=\sup\limits_{j=1,\ldots,k}\|P(x_j)\|\le\|P\|\cdot\sup\limits_{q\in B_{\mathcal{P}(^nE)}}\ap\sup\limits_{j=1,\ldots,k}|q(x_j)|\fp\\&=\|P\|\cdot\sup\limits_{q\in B_{\mathcal{P}(^nE)}}\|(q(x_j))_{j=1}^k\|_{\mathcal{Y}(\mathbb{K})}\le\|P\|\cdot\sup\limits_{q\in B_{\mathcal{P}(^nE)}}\|(q(x_j))_{j=1}^k\|_{\mathcal{X}(\mathbb{K})},\end{align*}  proving that $P\in\mathcal{P}(\mathcal{X};\mathcal{Y})(^nE;F)$ and $\|P\|_{\mathcal{P}(\mathcal{X};\mathcal{Y})}=\|P\|$.\end{proof}

One can argue that the we should have the supremum norm only on scalar-valued sequences. This is correct, but the search for sequence functors having the supremum norm on scalar-valued sequences and some different norm on vector-valued sequences would lead to artificial constructions, in which we are not interested.

\section{The boundedness method}\label{boundmeth}

In this section we show that the polynomial counterpart of the boundedness method for multilinear hyper-ideals \cite[Section 3]{ewerton2} generates polynomial hyper-ideals and that a (reasonable) variant of it generates polynomial two-sided ideals. $\mathcal{I}$-bounded polynomials were first introduced by Aron and Rueda \cite{aronrueda2}. Let $0 < p \leq 1$.

\begin{definition}\label{ilimpoldef}\rm Let $\mathcal{I}$ be a $p$-normed operator ideal. A polynomial $P\in\mathcal{P}(^nE;F)$ is {\it $\mathcal{I}$-bounded} if $P(B_E) \in {\cal C}_{\cal I}(F)$, that is, if there are a Banach space $H$ and a linear operator $u\in\mathcal{I}(H;F)$ such that $P(B_E)\subseteq u(B_H)$. In this case we write $P\in\mathcal{P}_{\langle\mathcal{I}\rangle}(^nE;F)$ and define $$\|P\|_{\mathcal{P}_{\langle\mathcal{I}\rangle}}=\inf\{\|u\|_\mathcal{I}: P(B_E)\subseteq u(B_H)\}.$$\end{definition}

Reasoning as in the proof of \cite[Theorem 3.3]{ewerton2} with the help of the series criterion for polynomial hyper-ideals (Theorem \ref{cs}), we get the

\begin{teo}\label{teoilimpol}Let $\mathcal{I}$ be a $p$-Banach operator ideal. Then $\mathcal{P}_{\langle\mathcal{I}\rangle}$ is a $p$-Banach polynomial hyper-ideal.\end{teo}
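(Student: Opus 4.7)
The plan is to invoke the polynomial series criterion (Theorem \ref{cs}(a)), which reduces the claim to checking that (i) $\widehat{I_n}\in\mathcal{P}_{\langle\mathcal{I}\rangle}(^n\mathbb{K};\mathbb{K})$ with $\|\widehat{I_n}\|_{\mathcal{P}_{\langle\mathcal{I}\rangle}}=1$, (ii) the class is stable under $p$-summable sums with the natural $p$-norm inequality, and (iii) the hyper-ideal property holds. The overall argument is a polynomial adaptation of the proof of \cite[Theorem 3.3]{ewerton2}.

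For item (i), note that any inclusion $P(B_E)\subseteq u(B_H)$ with $u\in\mathcal{I}(H;F)$ forces $\|P\|\le\|u\|\le\|u\|_{\mathcal{I}}$, so after taking infimum $\|\cdot\|\le\|\cdot\|_{\mathcal{P}_{\langle\mathcal{I}\rangle}}$ on every component. Choosing $u=id_{\mathbb{K}}$, which belongs to every operator ideal with $\|id_{\mathbb{K}}\|_{\mathcal{I}}=1$, and observing $\widehat{I_n}(B_{\mathbb{K}})=B_{\mathbb{K}}=id_{\mathbb{K}}(B_{\mathbb{K}})$ yields $\|\widehat{I_n}\|_{\mathcal{P}_{\langle\mathcal{I}\rangle}}\le 1$, and the reverse inequality follows from $\|\widehat{I_n}\|=1$. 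The containment of finite type polynomials and the basic linear structure are immediate from the fact that the range of any finite type polynomial lies in a finite-dimensional subspace and that every operator ideal contains all finite-rank operators.

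The hyper-ideal property is a straightforward set-theoretic calculation. Given $P\in\mathcal{P}_{\langle\mathcal{I}\rangle}(^nE;F)$, $Q\in\mathcal{P}(^mG;E)$, $t\in\mathcal{L}(F;H)$, and $u\in\mathcal{I}(H';F)$ with $P(B_E)\subseteq u(B_{H'})$, we combine $Q(B_G)\subseteq\|Q\|\cdot B_E$ with the $n$-homogeneity of $P$ to get
\begin{equation*}
(t\circ P\circ Q)(B_G)\subseteq\|Q\|^{n}\,t(P(B_E))\subseteq(\|Q\|^{n}\,t\circ u)(B_{H'}).
\end{equation*}
Since $\|Q\|^{n}\,t\circ u\in\mathcal{I}(H';H)$ with $\mathcal{I}$-norm at most $\|Q\|^{n}\|t\|\|u\|_{\mathcal{I}}$, infimizing over $u$ yields $\|t\circ P\circ Q\|_{\mathcal{P}_{\langle\mathcal{I}\rangle}}\le\|t\|\cdot\|P\|_{\mathcal{P}_{\langle\mathcal{I}\rangle}}\cdot\|Q\|^{n}$, which is the desired hyper-ideal inequality with $C_m=1$.

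The main technical step is item (ii), the $p$-summability/completeness clause: given $(P_j)\subseteq\mathcal{P}_{\langle\mathcal{I}\rangle}(^nE;F)$ with $\sum_j\|P_j\|_{\mathcal{P}_{\langle\mathcal{I}\rangle}}^{p}<\infty$, one must produce a single $u\in\mathcal{I}$ whose unit-ball image captures the range of $P=\sum_j P_j$. For each $\varepsilon>0$ and each $j$ I would select $u_j\in\mathcal{I}(H_j;F)$ with $P_j(B_E)\subseteq u_j(B_{H_j})$ and $\|u_j\|_{\mathcal{I}}^{p}\le(1+\varepsilon)\|P_j\|_{\mathcal{P}_{\langle\mathcal{I}\rangle}}^{p}$, fix lifts $h_j(x)\in B_{H_j}$ with $u_j(h_j(x))=P_j(x)$, and then assemble the $u_j$ into a single operator $u\colon H\to F$ on an $\ell_p$-type block-sum space $H=(\bigoplus_j H_j)_{\ell_p}$. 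Here the $p$-Banach structure of $\mathcal{I}$ is crucial: it guarantees both that $u\in\mathcal{I}$ and that $\|u\|_{\mathcal{I}}^{p}\le\sum_j\|u_j\|_{\mathcal{I}}^{p}$; an appropriate rescaling of the $h_j(x)$ then yields a vector $\mathbf{h}(x)\in B_H$ with $u(\mathbf{h}(x))=P(x)$. Letting $\varepsilon\to 0$ delivers $\|P\|_{\mathcal{P}_{\langle\mathcal{I}\rangle}}^{p}\le\sum_j\|P_j\|_{\mathcal{P}_{\langle\mathcal{I}\rangle}}^{p}$ and, together with the preceding steps, completes the verification of the hypotheses of Theorem \ref{cs}(a).
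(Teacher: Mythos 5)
Your overall strategy is the same as the paper's: invoke the series criterion (Theorem~\ref{cs}(a)) and adapt the multilinear boundedness-method argument. The verification that $\|\widehat{I_n}\|_{\mathcal{P}_{\langle\mathcal{I}\rangle}}=1$ and the hyper-ideal computation are both correct. (A small nit: for $\mathbb{K}=\mathbb{R}$ and $n$ even, $\widehat{I_n}(B_{\mathbb{K}})=[0,1]\subsetneq B_{\mathbb{K}}$, so the equality you wrote is false, but only the inclusion $\widehat{I_n}(B_{\mathbb{K}})\subseteq id_{\mathbb{K}}(B_{\mathbb{K}})$ is actually used, and that holds.)

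The block-sum construction in the completeness step, however, has a real gap. You take $H=(\bigoplus_j H_j)_{\ell_p}$. For $p<1$ this is only a $p$-Banach space, not a Banach space, so it is not an admissible domain in Definition~\ref{ilimpoldef}: the operator ideal $\mathcal{I}$ has components $\mathcal{I}(H;F)$ only for Banach spaces $H$, and the definition of $\mathcal{P}_{\langle\mathcal{I}\rangle}$ explicitly demands a Banach space $H$. Moreover, even ignoring that, the proposed rescaling does not close: to push $(h_j(x))_j$ into the $\ell_p$-unit ball you must multiply each $h_j(x)$ by a factor $c_j\le 1$ with $\sum_j c_j^p\le 1$, and to preserve $u(\mathbf{h}(x))=P(x)$ you must compensate by replacing $u_j$ with $u_j/c_j$; this inflates $\|u_j/c_j\|_{\mathcal{I}}^p=\|u_j\|_{\mathcal{I}}^p/c_j^p$ and destroys the bound $\|u\|_{\mathcal{I}}^p\le\sum_j\|u_j\|_{\mathcal{I}}^p$. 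The correct (and simpler) choice is the $\ell_\infty$-sum $H=(\bigoplus_j H_j)_{\ell_\infty}$, which is a Banach space for every $p\in(0,1]$: the coordinate projections $\pi_j\colon H\to H_j$ have norm $1$, so $u:=\sum_j u_j\circ\pi_j$ converges in the $p$-Banach space $\mathcal{I}(H;F)$ with $\|u\|_{\mathcal{I}}^p\le\sum_j\|u_j\circ\pi_j\|_{\mathcal{I}}^p\le\sum_j\|u_j\|_{\mathcal{I}}^p$, and for each $x\in B_E$ the tuple $\mathbf{h}(x)=(h_j(x))_j$ already lies in $B_H$ without any rescaling, so $u(\mathbf{h}(x))=\sum_j u_j(h_j(x))=\sum_j P_j(x)=P(x)$ and $P(B_E)\subseteq u(B_H)$. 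With this replacement your argument is complete.
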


\begin{ex}\label{exexex}\rm Let $\mathcal{K}$ and $\mathcal{W}$ denote, respectively, the closed ideals of compact and weakly compact operator. Reasoning as in \cite[Example 3.1]{aronrueda2}, we see that $\mathcal{C}_\mathcal{K}(E)$ is the collection of relatively compact subsets of $E$ and that $\mathcal{C}_\mathcal{W}(E)$ is the collection of relatively weakly compact subsets of $E$. Theorem \ref{teoilimpol} gives a second proof of the fact that the classes $\mathcal{P}_\mathcal{K}$ of compact homogenous polynomials and $\mathcal{L}_\mathcal{W}$ of weakly compact homogenous polynomials are closed  polynomial hyper-ideals.\end{ex}

By $\mathcal{L}_{\langle\mathcal{I}\rangle}$ we denote the multilinear hyper-ideal generated by the boundedness method (see \cite[Section 3]{ewerton2}), that is, a multilinear operator $A \in {\cal L}(E_1, \ldots, E_n;F)$ belongs to $\mathcal{L}_{\langle\mathcal{I}\rangle}$ if $A(B_{E_1}\times \cdots \times B_{E_n}) \in {\cal C}_{\cal I}(F)$. Next we compare the polynomial hyper-ideal $\mathcal{P}_{\langle\mathcal{I}\rangle}$ with the classes obtained from $\mathcal{L}_{\langle\mathcal{I}\rangle}$ by the method of Section \ref{multil}.

\begin{prop}Let $0 < p \leq 1$ and $\mathcal{I}$ be a $p$-Banach operator ideal. Then $\mathcal{P}_{\langle\mathcal{I}\rangle}=\mathcal{P}_{\mathcal{L}_{\langle\mathcal{I}\rangle}}=\mathcal{P}^{\mathcal{L}_{\langle\mathcal{I}\rangle}}$ and  $$\|P\|_{\mathcal{P}^{\mathcal{L}_{\langle\mathcal{I}\rangle}}}\le\|P\|_{\mathcal{P}_{\mathcal{L}_{\langle\mathcal{I}\rangle}}}\le (n!)^{\frac{1}{p}-1}\|P\|_{\mathcal{P}^{\mathcal{L}_{\langle\mathcal{I}\rangle}}} {\rm ~and~} \|P\|_{\mathcal{P}_{\langle\mathcal{I}\rangle}}\le\|P\|_{\mathcal{P}_{\mathcal{L}_{\langle\mathcal{I}\rangle}}}\le \dfrac{n^n}{n!}\|P\|_{\mathcal{P}_{\langle\mathcal{I}\rangle}},$$ for every $P\in\mathcal{P}_{\langle\mathcal{I}\rangle}(^nE;F)$.\end{prop}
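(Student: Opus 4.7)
The plan has two layers. First, I would establish the equality $\mathcal{P}_{\mathcal{L}_{\langle\mathcal{I}\rangle}} = \mathcal{P}^{\mathcal{L}_{\langle\mathcal{I}\rangle}}$ together with the first chain of inequalities by showing that $\mathcal{L}_{\langle\mathcal{I}\rangle}$ is strongly symmetric and then invoking Proposition \ref{corhip}(a). For the symmetry, given $A\in\mathcal{L}_{\langle\mathcal{I}\rangle}(^nE;F)$ and $\sigma\in S_n$, note that permuting the factors leaves the cube $B_E\times\cdots\times B_E$ invariant, so $A_\sigma(B_E\times\cdots\times B_E)=A(B_E\times\cdots\times B_E)$, yielding $A_\sigma\in\mathcal{L}_{\langle\mathcal{I}\rangle}$ with $\|A_\sigma\|_{\mathcal{L}_{\langle\mathcal{I}\rangle}}=\|A\|_{\mathcal{L}_{\langle\mathcal{I}\rangle}}$. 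Proposition \ref{corhip}(a) then delivers $\|P\|_{\mathcal{P}^{\mathcal{L}_{\langle\mathcal{I}\rangle}}}\le\|P\|_{\mathcal{P}_{\mathcal{L}_{\langle\mathcal{I}\rangle}}}\le (n!)^{1/p-1}\|P\|_{\mathcal{P}^{\mathcal{L}_{\langle\mathcal{I}\rangle}}}$.

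Next I would prove $\mathcal{P}_{\langle\mathcal{I}\rangle}=\mathcal{P}_{\mathcal{L}_{\langle\mathcal{I}\rangle}}$ with the stated norm estimates. The inclusion $\mathcal{P}_{\mathcal{L}_{\langle\mathcal{I}\rangle}}\subseteq \mathcal{P}_{\langle\mathcal{I}\rangle}$ (and the inequality $\|P\|_{\mathcal{P}_{\langle\mathcal{I}\rangle}}\le\|P\|_{\mathcal{P}_{\mathcal{L}_{\langle\mathcal{I}\rangle}}}$) is the easy half: if $\check{P}(B_E\times\cdots\times B_E)\subseteq u(B_H)$ with $u\in\mathcal{I}(H;F)$, then restricting to the diagonal gives $P(B_E)\subseteq u(B_H)$ as well, so the same factorization witnesses $P\in\mathcal{P}_{\langle\mathcal{I}\rangle}$ with $\|P\|_{\mathcal{P}_{\langle\mathcal{I}\rangle}}\le\|u\|_{\mathcal{I}}$; taking the infimum over admissible $u$ yields the inequality.

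The reverse inclusion is the heart of the matter and the place where the factor $n^n/n!$ appears. Given $P\in\mathcal{P}_{\langle\mathcal{I}\rangle}(^nE;F)$ and $u\in\mathcal{I}(H;F)$ with $P(B_E)\subseteq u(B_H)$, I would apply the polarization formula
$$\check{P}(x_1,\ldots,x_n)=\frac{1}{2^n n!}\sum_{\varepsilon_i=\pm 1}\varepsilon_1\cdots\varepsilon_n\, P(\varepsilon_1 x_1+\cdots+\varepsilon_n x_n).$$
For $x_i\in B_E$ we have $\varepsilon_1 x_1+\cdots+\varepsilon_n x_n\in nB_E$, hence $P(\varepsilon_1 x_1+\cdots+\varepsilon_n x_n)\in n^n P(B_E)\subseteq u(n^n B_H)$. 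Using that $u(B_H)$ is absolutely convex (so $\pm u(B_H)=u(B_H)$ and the sum of $2^n$ elements of $u(n^nB_H)$ lies in $u(2^n n^n B_H)$), one concludes that $\check{P}(B_E\times\cdots\times B_E)\subseteq u\bigl(\tfrac{n^n}{n!}B_H\bigr)=\bigl(\tfrac{n^n}{n!}u\bigr)(B_H)$. Since $\tfrac{n^n}{n!}u\in\mathcal{I}(H;F)$ with $\mathcal{I}$-norm $\tfrac{n^n}{n!}\|u\|_{\mathcal{I}}$, this shows $\check{P}\in\mathcal{L}_{\langle\mathcal{I}\rangle}(^nE;F)$ and, after taking infimum, $\|P\|_{\mathcal{P}_{\mathcal{L}_{\langle\mathcal{I}\rangle}}}=\|\check{P}\|_{\mathcal{L}_{\langle\mathcal{I}\rangle}}\le \tfrac{n^n}{n!}\|P\|_{\mathcal{P}_{\langle\mathcal{I}\rangle}}$.

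The only nontrivial step is the polarization bookkeeping above; everything else reduces to direct consequences of previously proved results (strong symmetry via Proposition \ref{corhip}, trivial restriction to the diagonal). The main obstacle to present cleanly is making sure that the absolute convexity of $u(B_H)$ absorbs the $2^n$-fold sum so that the $2^n$ factors in the polarization constant cancel, leaving exactly the classical polarization constant $n^n/n!$.
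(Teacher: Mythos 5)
Your proposal is correct and follows essentially the same route as the paper: establish strong symmetry of $\mathcal{L}_{\langle\mathcal{I}\rangle}$ from the permutation invariance of $B_E\times\cdots\times B_E$, invoke Proposition \ref{corhip}, handle the easy inclusion by restricting to the diagonal, and obtain the hard inclusion via polarization. The only cosmetic difference is that the paper constructs an explicit preimage $w=\tfrac{1}{2^n}\sum_{\varepsilon_j=\pm1}\varepsilon_1\cdots\varepsilon_n z_{\varepsilon_1,\ldots,\varepsilon_n}\in B_H$, whereas you absorb the $2^n$ into a convex-combination/absolute-convexity argument on $u(B_H)$; these are equivalent bookkeeping devices leading to the same constant $n^n/n!$.
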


\begin{proof} Using that $A_\sigma(B_E\times\cdots\times B_E)= A(B_E\times\cdots\times B_E)$ for every $n$-linear operator $A$ and every permutation $\sigma \in S_n$, it is not difficult to check that the multilinear hyper-ideal $\mathcal{L}_{\langle\mathcal{I}\rangle}$ is strongly symmetric. Proposition \ref{corhip} gives $\mathcal{P}_{\mathcal{L}_{\langle\mathcal{I}\rangle}}=\mathcal{P}^{\mathcal{L}_{\langle\mathcal{I}\rangle}}$ and $$\|\cdot\|_{\mathcal{P}^{\mathcal{L}_{\langle\mathcal{I}\rangle}}}\le
\|\cdot\|_{\mathcal{P}_{\mathcal{L}_{\langle\mathcal{I}\rangle}}}\le (n!)^{\frac{1}{p}-1}\|\cdot\|_{\mathcal{P}^{\mathcal{L}_{\langle\mathcal{I}\rangle}}},$$ on $\mathcal{P}_{\mathcal{L}_{\langle\mathcal{I}\rangle}}(^nE;F)$. Given
$P\in\mathcal{P}_{{\langle\mathcal{I}\rangle}}(^nE;F)$, then exist a Banach space $H$ and a linear operator $u\in\mathcal{I}(H;F)$ such that $P(B_E)\subseteq u(B_H)$. Let $x_1,\ldots,x_n\in B_E$ and $\varepsilon_1, \ldots, \varepsilon_n=\pm1$. Since $\displaystyle\frac{\varepsilon_1x_1 + \cdots + \varepsilon_nx_n}{n} \in B_E$, there is $z_{\varepsilon_1, \ldots, \varepsilon_n} \in B_H$ such that $$P\left(\displaystyle\frac{\varepsilon_1x_1 + \cdots + \varepsilon_nx_n}{n}\right) = u\left(z_{\varepsilon_1, \ldots, \varepsilon_n}\right).$$
Then $w:=\dfrac{1}{2^n}\sum\limits_{\varepsilon_j=\pm1}\varepsilon_1\cdots\varepsilon_nz_{\varepsilon_1,\ldots,\varepsilon_n} \in B_H$. By the polarization formula, \begin{eqnarray*}\check{P}(x_1,\ldots,x_n)&=&\dfrac{1}{n!2^n}\sum\limits_{\varepsilon_j=\pm1}\varepsilon_1\cdots\varepsilon_nP(\varepsilon_1x_1+\cdots+\varepsilon_nx_n) \\&=& \dfrac{1}{n!2^n}\sum\limits_{\varepsilon_j=\pm1}\varepsilon_1\cdots\varepsilon_nn^nP\ap\dfrac{\varepsilon_1x_1+\cdots+\varepsilon_nx_n}{n}\fp\\&=&
\dfrac{n^n}{n!2^n}\sum\limits_{\varepsilon_j=\pm1}\varepsilon_1\cdots\varepsilon_nu(z_{\varepsilon_1,\ldots,\varepsilon_n}) =\dfrac{n^n}{n!}u(w) \in \frac{n^n}{n!}u(B_H).\end{eqnarray*}
Since $\frac{n^n}{n!}u \in {\cal I}(H;F)$, we conclude that $\check{P}(B_E\times\cdots\times B_E)\subseteq \frac{n^n}{n!}u(B_H),$ proving that $\check{P}\in\mathcal{L}_{\langle\mathcal{I}\rangle}(^nE;F)$ and $$\|P\|_{{\cal P}_{\mathcal{L}_{\langle\mathcal{I}\rangle}}}= \|\check{P}\|_{\mathcal{L}_{\langle\mathcal{I}\rangle}}\le\an\dfrac{n^n}{n!}u\fn_\mathcal{I}=\dfrac{n^n}{n!}\cdot\|u\|_\mathcal{I}.$$ Then $\|P\|_{\mathcal{P}_{\mathcal{L}_{\langle\mathcal{I}\rangle}}}\le \frac{n^n}{n!}\cdot\|P\|_{\mathcal{P}_{{\langle\mathcal{I}\rangle}}}$.

Conversely, the inclusion $\mathcal{P}_{\mathcal{L}_{\langle\mathcal{I}\rangle}}\subseteq\mathcal{P}_{{\langle\mathcal{I}\rangle}}$ and the inequality $\|\cdot\|_{\mathcal{P}_{{\langle\mathcal{I}\rangle}}}\le\|\cdot\|_{\mathcal{P}_{\mathcal{L}_{\langle\mathcal{I}\rangle}}}$ follow easily from  the definitions.\end{proof}

The linear operator $u$ in Definition \ref{ilimpoldef} makes clear that $\mathcal{P}_{{\langle\mathcal{I}\rangle}}$ is not expected to be a two-sided ideal. Our aim is to introduce a variant of the boundedness method that generates polynomial two-sided ideals. Curiously enough, we found our way looking at another method of generating polynomial ideals that does not work well in our nonlinear setting, namely, the factorization method, which goes back to Pietsch in \cite{pietsch}:

\begin{definition}\label{metfatpol}\rm
Given a $p$-normed operator ideal $\mathcal{I}$, $0 < p \leq 1$, and an $n$-homogenous polynomial $P \in \mathcal{P}(^nE;F)$, we write $P\in{\cal P} \circ {\cal I}(^nE;F)$ if there exist a Banach space $G$, a linear operator $u\in\mathcal{I}(E;G)$ and an $n$-homogenous polynomial $Q \in \mathcal{P}(^nG;F)$ such that $P=Q\circ u$, and we define $$\|P\|_{{\cal P} \circ {\cal I}}=\inf\{\|Q\|\cdot\|u\|_{\mathcal{I}}^n : P=Q\circ u {\rm ~with~} u \in {\cal I}\}.$$\end{definition}

It is well known that ${\cal P} \circ {\cal I}$ is a $(\frac{p}{n})_{n=1}^\infty$-normed polynomial ideal, meaning that $\|\cdot\|_{{\cal P} \circ {\cal I}}$ is a $\frac{p}{n}$-norm on each component ${\cal P} \circ {\cal I}(^nE;F)$ (see, e.g., \cite{botelho1}).

As can be seen in \cite[Example 1.1]{ewerton2}, the factorization method does not generate even polynomial hyper-ideals in general. Next we show that a combination of the boundedness method (that generates polynomial hyper-ideals) with the factorization method (that generates polynomial ideals) is effective in the generation of two-sided-ideals.

\begin{definition}\label{defoo}\rm Let $\mathcal{I}$ be a $p$-normed operator ideal. For a polynomial $P\in \mathcal{P}(^nE;F)$, if we can find a Banach space $H$ and a polynomial $R\in {\cal P} \circ {\cal I}(^nH;F)$  such that \begin{equation}\label{eq:milimp}P(B_E)\subseteq R(B_H),\end{equation} we write $P\in \mathcal{P}_{\mathcal{Q}(\mathcal{I})}(^nE;F)$ and define $\|\cdot\|_{\mathcal{P}_{\mathcal{Q}(\mathcal{I})}}\colon \mathcal{P}_{\mathcal{Q}(\mathcal{I})}\longrightarrow [0,\infty)$ by $$\|P\|_{\mathcal{P}_{\mathcal{Q}(\mathcal{I})}}=\inf\{\|R\|_{{\cal P} \circ {\cal I}}:  R  \ \mbox{satisfies} \ \eqref{eq:milimp}\}.$$\end{definition}

\begin{teo}Let $0 < p \le 1$ and $\mathcal{I}$ be a $p$-normed ($p$-Banach) operator ideal. Then $\mathcal{P}_{\mathcal{Q}(\mathcal{I})}$ is a $(\frac{p}{n})_{n=1}^\infty$-normed ($(\frac{p}{n})_{n=1}^\infty$-Banach) polynomial two-sided ideal.\end{teo}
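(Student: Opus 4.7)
The plan is to verify the structural conditions of Definition \ref{dhip}(c) for $\mathcal{P}_{\mathcal{Q}(\mathcal{I})}$, then the two-sided ideal inequality, with completeness in the $p$-Banach case following from Theorem \ref{cs} applied degree-by-degree with parameter $p/n$.

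Several routine items transfer from $\mathcal{P}\circ\mathcal{I}$. Each component $\mathcal{P}_{\mathcal{Q}(\mathcal{I})}(^nE;F)$ is a linear subspace, and $\|\cdot\|_{\mathcal{P}_{\mathcal{Q}(\mathcal{I})}}$ inherits the $p/n$-norm property from $\|\cdot\|_{\mathcal{P}\circ\mathcal{I}}$ (noted in the text). Finite type polynomials $P=\sum_{j=1}^k\varphi_j^n\otimes y_j$ lie in $\mathcal{P}\circ\mathcal{I}(^nE;F)$ via their factorization through $\mathbb{K}^k$ with a finite-rank linear map, so one may take $H=E$, $R=P$ in Definition \ref{defoo}. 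The normalization $\|\widehat{I_n}\|_{\mathcal{P}_{\mathcal{Q}(\mathcal{I})}}=1$ is verified directly from the factorization $\widehat{I_n}=\widehat{I_n}\circ\mathrm{id}_\mathbb{K}$, combined with the bound $\|P\|\le\|P\|_{\mathcal{P}_{\mathcal{Q}(\mathcal{I})}}$ obtained immediately from $P(B_E)\subseteq R(B_H)$.

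The heart of the proof is the two-sided ideal property. Given $P\in\mathcal{P}_{\mathcal{Q}(\mathcal{I})}(^nE;F)$ with $P(B_E)\subseteq(T\circ u)(B_H)$ for some $u\in\mathcal{I}(H;G_1)$, $T\in\mathcal{P}(^nG_1;F)$, and given $Q\in\mathcal{P}(^mG;E)$, $R\in\mathcal{P}(^rF;H')$, the inclusion
$$(R\circ P\circ Q)(B_G)\subseteq \|Q\|^{rn}(R\circ T\circ u)(B_H)$$
is immediate. But $R\circ T\circ u$ has degree $rn$, whereas $R\circ P\circ Q$ must be realized as the image of a unit ball under a polynomial in $\mathcal{P}\circ\mathcal{I}(^{rnm}H_2;H')$. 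To boost the degree, pass to $H\oplus\mathbb{K}$ with the sup norm and define
$$\tilde{u}(x,\lambda)=(u(x),\|u\|_\mathcal{I}\lambda),\qquad \tilde{V}(y,\lambda)=\|u\|_\mathcal{I}^{-rn(m-1)}\|Q\|^{rn}\lambda^{rn(m-1)}(R\circ T)(y).$$
Then $\tilde{u}$ is linear and lies in $\mathcal{I}$ with $\|\tilde{u}\|_\mathcal{I}\le 2^{1/p}\|u\|_\mathcal{I}$ (split it as a sum of two $\mathcal{I}$-operators of $\mathcal{I}$-norm at most $\|u\|_\mathcal{I}$ and use the $p$-norm inequality), while $\tilde{V}$ is an $rnm$-homogeneous polynomial. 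Evaluating at $\lambda=1$ gives $(\tilde{V}\circ\tilde{u})(x,1)=\|Q\|^{rn}(R\circ T\circ u)(x)$, so $(R\circ P\circ Q)(B_G)\subseteq(\tilde{V}\circ\tilde{u})(B_{H\oplus\mathbb{K}})$. Multiplying $\|\tilde{V}\|\le\|u\|_\mathcal{I}^{-rn(m-1)}\|Q\|^{rn}\|R\|\|T\|^r$ by $\|\tilde{u}\|_\mathcal{I}^{rnm}\le 2^{rnm/p}\|u\|_\mathcal{I}^{rnm}$ yields $2^{rnm/p}\|R\|(\|T\|\|u\|_\mathcal{I}^n)^r\|Q\|^{rn}$; cascading infima over factorizations $S=T\circ u$ and over enveloping polynomials $S$ collapses $\|T\|\|u\|_\mathcal{I}^n$ to $\|P\|_{\mathcal{P}_{\mathcal{Q}(\mathcal{I})}}$, producing the two-sided ideal inequality with $K_r=1$ and $C_m=2^{m/p}$ for $m\ge2$ (the case $m=1$ uses only linear composition and gives $C_1=1$ directly).

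The main obstacle is this matched scaling of the scalar coordinate by $\|u\|_\mathcal{I}$ in both $\tilde{u}$ and $\tilde{V}$: the precise cancellation between $\|u\|_\mathcal{I}^{-rn(m-1)}$ in $\tilde{V}$ and the factor $\|u\|_\mathcal{I}^{rn(m-1)}$ hidden inside $\|\tilde{u}\|_\mathcal{I}^{rnm}$ is what lets the final bound factor through $\|T\|\|u\|_\mathcal{I}^n$ and hence through $\|S\|_{\mathcal{P}\circ\mathcal{I}}$. Using an unweighted $\mathbb{K}$-factor would yield a bound of the form $(\|u\|_\mathcal{I}+1)^{rnm}$ that is not homogeneous in $\|u\|_\mathcal{I}$ and cannot be closed via the defining infimum. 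For the $p$-Banach case, Theorem \ref{cs} reduces completeness to the $(p/n)$-series criterion on each component, which follows by choosing near-optimal factorizations for each summand and assembling them using the $p$-completeness of $\mathcal{I}$.
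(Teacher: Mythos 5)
Your proof is technically careful, but it establishes a strictly weaker conclusion than the theorem states, and the root cause is an over-strict reading of Definition \ref{defoo}. The theorem asserts that $\mathcal{P}_{\mathcal{Q}(\mathcal{I})}$ is a polynomial two-sided ideal, i.e.\ with constants $C_n=K_n=1$; your degree-boosting construction produces only a $(2^{m/p},1)_{m}$-two-sided ideal. That factor $2^{rnm/p}$, coming from splitting $\tilde u$ into two $\mathcal{I}$-pieces on $H\oplus\mathbb{K}$, cannot be removed by cascading infima because the infimum is over factorizations of the fixed operator $u$, not over the block decomposition of $\tilde u$.

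The extra work and the resulting loss of constants stem from assuming that the enveloping polynomial in Definition \ref{defoo} must have the same degree of homogeneity as the polynomial being enveloped. The paper's own proof does not enforce this: in the right-composition step it envelopes the $nm$-homogeneous polynomial $P\circ Q$ by $\|Q\|^n P_1$, which is only $n$-homogeneous. Thus the occurrence of ``$^n$'' in ``$R\in\mathcal{P}\circ\mathcal{I}(^nH;F)$'' should be read as an unconstrained degree, in analogy with Definition \ref{ilimpoldef}, where an $n$-homogeneous polynomial is dominated by the image of a unit ball under a \emph{linear} operator. Under this reading your first inclusion $(R\circ P\circ Q)(B_G)\subseteq \|Q\|^{rn}(R\circ T\circ u)(B_H)$ already closes the argument: $\|Q\|^{rn}(R\circ T)\circ u$ lies in $\mathcal{P}\circ\mathcal{I}(^{rn}H;H')$ with $\|\cdot\|_{\mathcal{P}\circ\mathcal{I}}$-norm at most $\|Q\|^{rn}\|R\|\,\|T\|^r\|u\|_{\mathcal{I}}^{rn}=\|Q\|^{rn}\|R\|\bigl(\|T\|\,\|u\|_{\mathcal{I}}^{n}\bigr)^r$, and taking infima over factorizations $T\circ u$ and over enveloping polynomials gives the clean bound $\|R\circ P\circ Q\|_{\mathcal{P}_{\mathcal{Q}(\mathcal{I})}}\le\|R\|\cdot\|P\|_{\mathcal{P}_{\mathcal{Q}(\mathcal{I})}}^r\cdot\|Q\|^{rn}$ with $C_m=K_r=1$. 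The $H\oplus\mathbb{K}$ lift is therefore unnecessary, and dropping it is what recovers the stated constants.
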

\begin{proof} Let us check the two-sided ideal property. The other conditions of Definition \ref{dhip} (or of Theorem \ref{cs} in the complete case) are easily verified having in mind that ${\cal P} \circ {\cal I}$ is a $(\frac{p}{n})_{n=1}^\infty$-normed (Banach) polynomial ideal.
Let $P\in\mathcal{P}_{\mathcal{Q}(\mathcal{I})}(^nE;F)$, $Q\in\mathcal{P}(^mG;E)$, $R\in\mathcal{P}(^rF;H)$ and $\varepsilon>0$. On the one hand, we can find a Banach space $E_1$ and a polynomial $P_1\in {\cal P} \circ {\cal I}(^nE_1;F)$ such that $P(B_E)\subseteq P_1(B_{E_1})$ and
\begin{equation}\label{fghg}
\|P_1\|_{{\cal P} \circ {\cal I}}<(1+\varepsilon)\|P\|_{\mathcal{P}_{\mathcal{Q}(\mathcal{I})}}.
\end{equation} Then $P\circ Q(B_G)\subseteq \|Q\|^nP_1(B_{E_1}).$ Since $\|Q\|^nP_1\in {\cal P} \circ {\cal I}(^nE_1;F)$, we conclude that $P\circ Q\in\mathcal{P}_{\mathcal{Q}(\mathcal{I})}(^{nm}G;F)$ and $$\|P\circ Q\|_{\mathcal{P}_{\mathcal{Q}(\mathcal{I})}}\le\|(\|Q\|^nP_1)\|_{{\cal P} \circ {\cal I}} <(1+\varepsilon)\|P\|_{\mathcal{P}_{\mathcal{Q}(\mathcal{I})}}\cdot\|Q\|^n.$$
Making $\varepsilon \longrightarrow 0^+$, we get $\|P\circ Q\|_{\mathcal{P}_{\mathcal{Q}(\mathcal{I})}}\le
\|P\|_{\mathcal{P}_{\mathcal{Q}(\mathcal{I})}}\cdot\|Q\|^n$.

On the other hand, from (\ref{fghg}) there are a Banach space $F_1$, an operator $u\in\mathcal{I}(E_1;F_1)$ and a  polynomial $S\in\mathcal{P}(^nF_1;F)$ such that $P_1=S\circ u$ and $$ \|S\|\cdot\|u\|_{\mathcal{I}}^n  <(1+\varepsilon)\|P\|_{\mathcal{P}_{\mathcal{Q}(\mathcal{I})}}.$$
Then $R\circ P(B_E)\subseteq (R\circ S)\circ u(B_{E_1}).$
Since $(R\circ S)\circ u \in {\cal P} \circ {\cal I}(^{nr}E_1;H)$, we conclude that $R\circ P\in\mathcal{P}_{\mathcal{Q}(\mathcal{I})}(^{nr}E;H)$ and $$\|R\circ P\|_{\mathcal{P}_{\mathcal{Q}(\mathcal{I})}}\le\|R\circ S\|\cdot\|u\|_{\mathcal{I}}^{rn}\le \|R\|(\|S\|\cdot\|u\|_{\mathcal{I}}^{n})^r\le(1+\varepsilon)^r\|R\|\cdot\|P\|_{\mathcal{P}_{\mathcal{Q}(\mathcal{I})}}^r.$$ Making $\varepsilon \longrightarrow 0^+$, we get $\|R\circ P\|_{\mathcal{P}_{\mathcal{Q}(\mathcal{I})}}\le\|R\|\cdot\|P\|_{\mathcal{P}_{\mathcal{Q}(\mathcal{I})}}^r$. Thus, the two-sided ideal property follows.\end{proof}

\section{Composition polynomial ideals}\label{compideals}

In this section we show that the classical composition polynomial ideal ${\cal I}\circ \cal {\cal P}$, that goes back to Pietsch \cite{pietsch}, is a polynomial hyper-ideal for every operator ideal $\cal I$, and we establish a sufficient (and necessary) condition on $\cal I $ for  ${\cal I}\circ \cal {\cal P}$ to be a polynomial two-sided-ideal. Examples of operator ideals satisfying such condition are provided.

\begin{definition}\label{pidcom}\rm Let $\mathcal{I}$ is a $p$-normed operator ideal. A polynomial $P\in \mathcal{P}(^nE;F)$ is said to belong to $\mathcal{I}\circ \mathcal{P}$ if there are a Banach space $G$, a polynomial $Q\in\mathcal{P}(^nE;G)$ and an operator $u\in\mathcal{I}(G;F)$ such that $P=u\circ Q.$ Define $\|\cdot\|_{\mathcal{I}\circ\mathcal{P}}\colon\mathcal{I}\circ\mathcal{P}\longrightarrow[0,\infty)$ by $$\|P\|_{\mathcal{I}\circ\mathcal{P}}=\inf\{\|u\|_\mathcal{I}\cdot\|Q\|: P=u\circ Q, u \in {\cal I}\}.$$\end{definition}

More information on composition ideals can be found in \cite{bpr2}. Actually, there are two natural norms on $\mathcal{I}\circ \mathcal{P}$ (cf. \cite[Definition 3.6(b)]{bpr2}). The results we prove in this section will make clear that the norm we have chosen is, in fact, more appropriate to the hyper/two-sided ideals environment.

\begin{teo}\label{thipic}If $\mathcal{I}$ is a $p$-normed ($p$-Banach) operator ideal, then $\mathcal{I}\circ\mathcal{P}$ is a $p$-normed ($p$-Banach) polynomial hyper-ideal.\end{teo}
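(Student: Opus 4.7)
My plan is to verify, for $\mathcal{I}\circ\mathcal{P}$ equipped with $\|\cdot\|_{\mathcal{I}\circ\mathcal{P}}$, the conditions of Definition \ref{dhip}(a): (i) each component $(\mathcal{I}\circ\mathcal{P})(^nE;F)$ is a linear subspace of $\mathcal{P}(^nE;F)$ containing the finite type polynomials; (ii) $\|\cdot\|_{\mathcal{I}\circ\mathcal{P}}$ is a $p$-norm with $\|\widehat{I_n}\|_{\mathcal{I}\circ\mathcal{P}}=1$; (iii) the hyper-ideal property holds with $C_n=1$. In the $p$-Banach case I will then invoke the series criterion (Theorem \ref{cs}(a)), so it will suffice to prove in addition that a $p$-summable series in $(\mathcal{I}\circ\mathcal{P})(^nE;F)$ converges in the class with the expected $p$-norm inequality.

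The hyper-ideal property is the heart of the matter and is short. Given $P\in(\mathcal{I}\circ\mathcal{P})(^nE;F)$ with a factorization $P=u\circ R$, where $u\in\mathcal{I}(G_0;F)$ and $R\in\mathcal{P}(^nE;G_0)$, together with $Q\in\mathcal{P}(^mG;E)$ and $t\in\mathcal{L}(F;H)$, I rewrite
\[
t\circ P\circ Q=(t\circ u)\circ(R\circ Q).
\]
The left-ideal property of $\mathcal{I}$ gives $t\circ u\in\mathcal{I}(G_0;H)$ with $\|t\circ u\|_\mathcal{I}\le\|t\|\cdot\|u\|_\mathcal{I}$, while elementary polynomial estimates give $R\circ Q\in\mathcal{P}(^{mn}G;G_0)$ with $\|R\circ Q\|\le\|R\|\cdot\|Q\|^n$. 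Multiplying these and taking the infimum over all factorizations of $P$ yields $\|t\circ P\circ Q\|_{\mathcal{I}\circ\mathcal{P}}\le\|t\|\cdot\|P\|_{\mathcal{I}\circ\mathcal{P}}\cdot\|Q\|^n$. The remaining structural items are standard: linearity and the $p$-norm property of each component follow by a routine $(1+\varepsilon)$-optimal factorization argument together with the $p$-norm property of $\|\cdot\|_\mathcal{I}$; a finite type polynomial $\varphi^n\otimes y$ factors as $u_y\circ\varphi^n$, where $u_y\colon\lambda\mapsto\lambda y$ is a rank-one operator and hence lies in $\mathcal{I}$; the factorization $\widehat{I_n}=id_\mathbb{K}\circ\widehat{I_n}$ gives $\|\widehat{I_n}\|_{\mathcal{I}\circ\mathcal{P}}\le 1$, while the general bound $\|\cdot\|\le\|\cdot\|_{\mathcal{I}\circ\mathcal{P}}$ (coming from $\|u\circ R\|\le\|u\|_\mathcal{I}\cdot\|R\|$ infimized over factorizations) supplies the reverse inequality.

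For completeness in the $p$-Banach case, I take $(P_j)\subseteq(\mathcal{I}\circ\mathcal{P})(^nE;F)$ with $\sum_j\|P_j\|_{\mathcal{I}\circ\mathcal{P}}^p<\infty$, choose $\varepsilon>0$, and pick factorizations $P_j=u_j\circ R_j$ with $u_j\in\mathcal{I}(G_j;F)$, rescaling by a scalar so that $\|R_j\|=1$ and $\|u_j\|_\mathcal{I}\le(1+\varepsilon)\|P_j\|_{\mathcal{I}\circ\mathcal{P}}$. I then assemble a single factorization of $P:=\sum_j P_j$ over the $\ell_\infty$-sum $G=\bigl(\bigoplus_j G_j\bigr)_{\ell_\infty}$ by defining $R(x):=(R_j(x))_j$ and $u:=\sum_j u_j\circ\pi_j$, where $\pi_j\colon G\to G_j$ is the canonical projection. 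The uniform bound $\|R_j\|=1$ is precisely what makes $R$ land in $G$ with $\|R\|\le 1$ (its multilinear representative takes values in the $\ell_\infty$-sum by the polarization formula); and the estimate $\sum_j\|u_j\circ\pi_j\|_\mathcal{I}^p\le\sum_j\|u_j\|_\mathcal{I}^p<\infty$ lets me invoke the series criterion for the $p$-Banach operator ideal $\mathcal{I}$ to conclude that $u\in\mathcal{I}(G;F)$ with $\|u\|_\mathcal{I}^p\le(1+\varepsilon)^p\sum_j\|P_j\|_{\mathcal{I}\circ\mathcal{P}}^p$. The identity $P=u\circ R$ follows upon interchanging sum and evaluation, justified by operator-norm convergence of the partial sums defining $u$; letting $\varepsilon\to 0^+$ gives the desired $p$-norm estimate. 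I expect the main technical obstacle to be precisely this $\ell_\infty$-assembly step, where the normalization must simultaneously keep $R$ continuous into the coproduct and keep $\sum_j\|u_j\|_\mathcal{I}^p$ controlled by $\sum_j\|P_j\|_{\mathcal{I}\circ\mathcal{P}}^p$; the choice $\|R_j\|=1$ is what makes both requirements fit together cleanly.
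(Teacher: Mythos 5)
Your argument for the hyper-ideal property---the decomposition $t\circ P\circ Q=(t\circ u)\circ(R\circ Q)$ followed by the infimum over factorizations---is exactly the paper's, and the rest of your proof is correct; the only difference is that the paper simply cites the well-known fact that $\mathcal{I}\circ\mathcal{P}$ is a (complete) polynomial ideal, whereas you re-derive the structural conditions and completeness yourself. Your $\ell_\infty$-assembly for completeness (normalizing $\|R_j\|=1$, forming $R(x)=(R_j(x))_j$ with values in the $\ell_\infty$-sum, and summing the $u_j\circ\pi_j$ via the $p$-Banach series criterion for $\mathcal{I}$) is precisely the standard proof of that cited fact.
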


\begin{proof} It is well known that $\mathcal{I}\circ\mathcal{P}$ is a (complete if $\mathcal{I}$ is complete) polynomial ideal, so we just have to check the hyper-ideal property. Given  $P\in\mathcal{I}\circ\mathcal{P}(^nE;F)$, $R\in\mathcal{P}(^mH;E)$ and $t\in\mathcal{L}(F;H_1)$, write $P=u\circ Q$ where $G$ is a Banach space, $u\in\mathcal{I}(G;F)$ and $Q\in\mathcal{P}(^nE;G)$. Since $$t\circ P\circ R = t\circ (u\circ Q)\circ R=(t\circ u)\circ(Q\circ R),$$ $t\circ u\in \mathcal{I}(G;H_1)$ and $Q\circ R\in\mathcal{P}(^{mn}H;G)$, we have $t\circ P \circ R \in \mathcal{I}\circ\mathcal{P}(^{mn}H;H_1).$
Moreover, $$\|t\circ P\circ R\|_{\mathcal{I}\circ\mathcal{P}} = \|(t\circ u)\circ(Q\circ R)\|_{\mathcal{I}\circ\mathcal{P}} \leq \|t\circ u\|_\mathcal{I}\cdot\|Q\circ R\|\le \|t\|\cdot(\|u\|_\mathcal{I}\cdot\|Q\|)\cdot\|R\|^n.$$ The hyper-ideal inequality follows by taking the infimum over all such factorizations of $P$.\end{proof}

\begin{ex}\label{excfc}\rm Ryan \cite{ryanthesis} proved that $${\cal P}_{\cal K} =  \mathcal{K}\circ\mathcal{P} {\rm ~~ and~~ } {\cal P}_{\cal W} =  \mathcal{W}\circ\mathcal{P}.$$
So, Theorem \ref{thipic} gives one more proof that the classes ${\cal P}_{\cal K}$ of compact polynomials and ${\cal P}_{\cal W}$ of weakly compact polynomials are closed polynomial hyper-ideals.
\end{ex}

To give another remarkable example we need of the following definition introduced by Aron and Rueda \cite{aronrueda3} (the linear case is due to Sinha and Karn \cite{sinha}).

\begin{definition}\rm Let $p,q \geq 1$ be such that $\frac{1}{p}+\frac{1}{q}=1$. An $n$-homogenous polynomial $P\in\mathcal{P}(^nE;F)$ is called {\it $p$-compact} if there is a sequence $(x_j)_{j=1}^\infty\in\ell_p(F)$ such that \begin{equation}\label{eq:pcomp}P(B_E)\subseteq\left\{\sum\limits_{j=1}^\infty
\lambda_jx_j :  (\lambda_j)_{j=1}^\infty\in B_{\ell_q} \right\}.\end{equation}
In this case we write $P\in\mathcal{P}_{\mathcal{K}_p}(^nE;F)$ and define  $$\|P\|_{\mathcal{P}_{\mathcal{K}_p}}=\inf\{\|(x_j)_{j=1}^\infty\|_p :  (x_j)_{j=1}^\infty \mbox{~satisfies} \ (\ref{eq:pcomp})\}.$$
Making $n=1$ we recover the extensively studied Banach ideal $\mathcal{K}_p$ of $p$-compact linear operators (see, e.g., \cite{pietschpams} and references therein).\end{definition}

\begin{ex}\rm Combining \cite[Theorem 3.1]{aronrueda3} and \cite[Proposition 3.2]{bpr2} we get that $\mathcal{P}_{\mathcal{K}_p}={\mathcal{K}_p}\circ\mathcal{P}$ isometrically. Therefore the class $\mathcal{P}_{\mathcal{K}_p}$ of $p$-compact polynomials is a Banach polynomial hyper-ideal by Theorem \ref{thipic}.
\end{ex}

By $\mathcal{I}\circ\mathcal{L}$ we denote the composition multilinear ideal, that is, $A \in \mathcal{I}\circ\mathcal{L}(E_1, \ldots, E_n;F)$ if there are a Banach space $G$, an $n$-linear operator $B \in {\cal L}(E_1, \ldots, E_n;G)$ and an operator $u \in {\cal I}(G;F)$ such that $A = u \circ B$. The function $\|\cdot\|_{\mathcal{I}\circ\mathcal{L}}$ is defined in the obvious way.

\begin{prop}For every $p$-normed ($p$-Banach) operator ideal $\cal I$, $$\mathcal{I}\circ\mathcal{P}=\mathcal{P}_{\mathcal{I}\circ\mathcal{L}}=\mathcal{P}^{\mathcal{I}\circ\mathcal{L}}.$$
And if $P\in\mathcal{I}\circ\mathcal{P}(^nE;F)$, then
$$\|P\|_{\mathcal{P}^{\mathcal{I}\circ\mathcal{L}}}\le\|P\|_{\mathcal{P}_{\mathcal{I}\circ\mathcal{L}}}\le (n!)^{\frac{1}{p}-1}\|P\|_{\mathcal{P}^{\mathcal{I}\circ\mathcal{L}}} ~~and~~ \|P\|_{\mathcal{I}\circ\mathcal{P}}\le\|P\|_{\mathcal{P}_{\mathcal{I}\circ\mathcal{L}}}\le c(n,E)\|P\|_{\mathcal{I}\circ\mathcal{P}},$$ where $c(n,E)$ is the $n$-th polarization constant of the Banach space $E$.\end{prop}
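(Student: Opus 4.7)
The plan is to reduce the first inequality chain to Proposition \ref{corhip}(a) via Lemma \ref{lemasim}, so the crux will be to verify that the multilinear ideal $\mathcal{I}\circ\mathcal{L}$ is strongly symmetric. Given $A\in\mathcal{I}\circ\mathcal{L}(^nE;F)$ with a factorization $A=u\circ B$, where $u\in\mathcal{I}(G;F)$ and $B\in\mathcal{L}(^nE;G)$, I would observe for each $\sigma\in S_n$ that $A_\sigma=u\circ B_\sigma$, that $B_\sigma\in\mathcal{L}(^nE;G)$ with $\|B_\sigma\|=\|B\|$, and therefore that $A_\sigma\in\mathcal{I}\circ\mathcal{L}(^nE;F)$ with $\|A_\sigma\|_{\mathcal{I}\circ\mathcal{L}}\le\|u\|_{\mathcal{I}}\cdot\|B\|$; taking the infimum over such factorizations gives $\|A_\sigma\|_{\mathcal{I}\circ\mathcal{L}}\le\|A\|_{\mathcal{I}\circ\mathcal{L}}$, and the reverse inequality comes from applying the same argument to $A_{\sigma^{-1}\sigma}=(A_\sigma)_{\sigma^{-1}}$. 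Once strong symmetry is in hand, Lemma \ref{lemasim} yields $\mathcal{P}^{\mathcal{I}\circ\mathcal{L}}=\mathcal{P}_{\mathcal{I}\circ\mathcal{L}}$ and Proposition \ref{corhip}(a) gives the sandwich $\|\cdot\|_{\mathcal{P}^{\mathcal{I}\circ\mathcal{L}}}\le\|\cdot\|_{\mathcal{P}_{\mathcal{I}\circ\mathcal{L}}}\le(n!)^{1/p-1}\|\cdot\|_{\mathcal{P}^{\mathcal{I}\circ\mathcal{L}}}$.

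Next I would identify $\mathcal{I}\circ\mathcal{P}$ with $\mathcal{P}^{\mathcal{I}\circ\mathcal{L}}$ by a direct back-and-forth on factorizations. For the inclusion $\mathcal{I}\circ\mathcal{P}\subseteq\mathcal{P}^{\mathcal{I}\circ\mathcal{L}}$, given $P=u\circ Q$ with $u\in\mathcal{I}(G;F)$ and $Q\in\mathcal{P}(^nE;G)$, the multilinear operator $A:=u\circ\check{Q}$ lies in $\mathcal{I}\circ\mathcal{L}(^nE;F)$ and satisfies $\widehat{A}=u\circ Q=P$. Conversely, if $A\in\mathcal{I}\circ\mathcal{L}(^nE;F)$ factors as $A=u\circ B$, then $P=\widehat{A}=u\circ\widehat{B}$ displays $P$ as an element of $\mathcal{I}\circ\mathcal{P}(^nE;F)$. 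This establishes the set equality $\mathcal{I}\circ\mathcal{P}=\mathcal{P}^{\mathcal{I}\circ\mathcal{L}}=\mathcal{P}_{\mathcal{I}\circ\mathcal{L}}$.

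For the final pair of inequalities, I would argue both sides at the level of representations. If $\check{P}=u\circ B$ is a factorization witnessing $\check{P}\in\mathcal{I}\circ\mathcal{L}$, then $P=\widehat{\check{P}}=u\circ\widehat{B}$ and $\|P\|_{\mathcal{I}\circ\mathcal{P}}\le\|u\|_{\mathcal{I}}\cdot\|\widehat{B}\|\le\|u\|_{\mathcal{I}}\cdot\|B\|$; infimizing over factorizations of $\check{P}$ gives $\|P\|_{\mathcal{I}\circ\mathcal{P}}\le\|\check{P}\|_{\mathcal{I}\circ\mathcal{L}}=\|P\|_{\mathcal{P}_{\mathcal{I}\circ\mathcal{L}}}$. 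In the opposite direction, starting from a factorization $P=u\circ Q$ of $P$ in $\mathcal{I}\circ\mathcal{P}$, the identity $\check{P}=(u\circ Q)^\vee=u\circ\check{Q}$ (which uses linearity of $u$) combined with the polarization estimate $\|\check{Q}\|\le c(n,E)\|Q\|$ gives
\[
\|P\|_{\mathcal{P}_{\mathcal{I}\circ\mathcal{L}}}=\|\check{P}\|_{\mathcal{I}\circ\mathcal{L}}\le\|u\|_{\mathcal{I}}\cdot\|\check{Q}\|\le c(n,E)\|u\|_{\mathcal{I}}\cdot\|Q\|,
\]
and taking the infimum over factorizations of $P$ completes the remaining inequality.

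The only point that might demand a little care is the identity $(u\circ Q)^\vee=u\circ\check{Q}$ used in the last step; I expect it to be immediate from uniqueness of the symmetric multilinear operator associated with a homogeneous polynomial, since $u\circ\check{Q}$ is symmetric and satisfies $(u\circ\check{Q})^\wedge=u\circ Q$. No other step looks subtle; the argument rests on manipulating factorizations and invoking previously established results.
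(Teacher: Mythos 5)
Your proof is correct and follows the same structure as the paper's: reduce to Lemma \ref{lemasim} and Proposition \ref{corhip}(a) via strong symmetry of $\mathcal{I}\circ\mathcal{L}$, then pass between factorizations of $P=u\circ Q$ and $\check{P}=u\circ\check{Q}$ for the set equality and the polarization estimate. The only difference is cosmetic: where the paper delegates strong symmetry of $\mathcal{I}\circ\mathcal{L}$ and the norm inequalities to \cite[Propositions 3.3 and 3.7(b)]{bpr2}, you supply the short direct arguments (via $A_\sigma=u\circ B_\sigma$ with $\|B_\sigma\|=\|B\|$, and via $\|\widehat{B}\|\le\|B\|$ together with $\|\check{Q}\|\le c(n,E)\|Q\|$), which makes the proof self-contained.
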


\begin{proof} The class $\mathcal{I}\circ\mathcal{L}$ of multilinear operators is strongly symmetric (cf. \cite[Proposition 3.3]{bpr2}), so $\mathcal{P}_{\mathcal{I}\circ\mathcal{L}}=\mathcal{P}^{\mathcal{I}\circ\mathcal{L}}$. 
The corresponding inequalities follow from Proposition \ref{corhip}. Having in mind that if $P=u\circ Q$, then $\check{P}=u\circ \check{Q}$, and if $A=t\circ B$, then $\widehat{A}=t\circ \widehat{B}$; the equality $\mathcal{I}\circ\mathcal{P}=\mathcal{P}_{\mathcal{I}\circ\mathcal{L}}$ follows immediately. The remaining norm inequalities follow from \cite[Proposition 3.7.(b)]{bpr2}.\end{proof}

The next natural step is to study composition ideals from the perspective of two-sided ideals. Sometimes composition ideals are polynomial two-sided ideals, for example the class ${\cal P}_{\cal K} = {\cal K} \circ {\cal P}$ of compact polynomials. But sometimes they are not two-sided ideals, for example the class ${\cal P}_{\cal W} = {\cal W} \circ {\cal P}$ of weakly compact polynomials (cf. Example \ref{expsi}). So, some extra condition on $\cal I$ is needed for ${\cal I} \circ {\cal P}$ to be a two-sided ideal. The rest of the paper is devoted to the search of such a condition and of operator ideals $\cal I$ satisfying it. The idea of the condition we shall provide, which is not only sufficient but also necessary, was based on \cite[Proposition 3.3]{sonia}.

By $\widehat{\otimes}^{n,s}_{\pi_s}E$ we denote the completed $n$-fold symmetric tensor product of $E$ endowed with the $s$-projective norm $\pi_s$ (see Floret \cite{klaus3}). The elementary symmetric tensor $x \otimes \stackrel{(n)}{\cdots}\otimes x$ shall be denoted by $\otimes^n x$. For $P \in {\cal P}(^nE;F)$, by $P_L$ we mean the linearization of $P$ on $\widehat{\otimes}^{n,s}_{\pi_s}E$, that is, $P_L\in \mathcal{L}(\widehat{\otimes}^{n,s}_{\pi_s}E;F)$, $P(x) = P_L(\otimes^n x)$ for every $x \in E$ and $\|P_L\| = \|P\|$. Given $u\in\mathcal{L}(E;F)$, there is a unique continuous linear operator $\otimes^{n,s}u\colon \widehat{\otimes}^{n,s}_{\pi_s}E\longrightarrow\widehat{\otimes}^{n,s}_{\pi_s}F$ such that $$\otimes^{n,s}u(\otimes^n x)=\otimes^n u(x)$$
for every $x \in E$ and $\|\otimes^{n,s}u\| = \|u\|^n$ \cite[Proposition 2.2(6)]{klaus3}.

\begin{definition}\rm A $p$-normed operator ideal $\mathcal{I}$ is said to be \textit{symmetrically tensorstable} ({\it s-tensorstable} in short) if there is a sequence $(C_n)_{n=1}^\infty$ of positive numbers such that, for all $n\in\mathbb{N}$ and $u\in\mathcal{I}(E;F)$, it holds $$\otimes^{n,s}u\in\mathcal{I}\left(\widehat{\otimes}^{n,s}_{\pi_s}E;\widehat{\otimes}^{n,s}_{\pi_s}F\right)\ \mbox{and}\ \|\otimes^{n,s}u\|_\mathcal{I}\le C_n\|u\|_\mathcal{I}^n.$$\end{definition}

Recall the notation ${\cal P}\circ{\cal I}$ introduced in Definition \ref{metfatpol}.

\begin{teo}\label{phipic}The following are equivalent for a $p$-normed ($p$-Banach) operator ideal $\mathcal{I}$:\\
{\rm (a)} $\mathcal{I}\circ\mathcal{P}$ is a $p$-normed ($p$-Banach) polynomial $(1,C_n)_{n=1}^\infty$-two-sided ideal.\\
{\rm (b)} $\mathcal{P}\circ\mathcal{I}\stackrel{(C_n)_{n}}{\longhookrightarrow}\mathcal{I}\circ\mathcal{P}$.\\
{\rm (c)} $\mathcal{I}$ is s-tensorstable with constants $(C_n)_{n=1}^\infty$.\end{teo}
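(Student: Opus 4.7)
The plan is to prove the circular chain (a) $\Leftrightarrow$ (b) and (b) $\Leftrightarrow$ (c). Since Theorem \ref{thipic} already gives the hyper-ideal structure of $\mathcal{I}\circ\mathcal{P}$ with constants $(C_n)=(1)_n$, condition (a) really amounts, beyond what is free, to controlling only the left composition $R\circ P$ with a polynomial $R$. The key device throughout will be the universal linearization of homogeneous polynomials on $\widehat{\otimes}^{n,s}_{\pi_s}E$: for $Q\in\mathcal{P}(^nE;G)$, $Q = Q_L\circ \delta_E$ with $\|Q_L\|=\|Q\|$, where $\delta_E(x)=\otimes^n x$ satisfies $\|\delta_E\|=1$; and for $u\in\mathcal{L}(E;F)$, the identity $\delta_F\circ u = (\otimes^{n,s}u)\circ \delta_E$ holds.

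For (a) $\Rightarrow$ (b): given $P\in \mathcal{P}\circ\mathcal{I}(^nE;F)$ and a factorization $P=R\circ u$ with $u\in\mathcal{I}(E;G)$ and $R\in\mathcal{P}(^nG;F)$, view $u$ as an element of $\mathcal{I}\circ\mathcal{P}(^1E;G)$ with the same norm. Applying the two-sided ideal inequality of (a) with $\tilde Q=id_E$ (degree $1$) gives $\|P\|_{\mathcal{I}\circ\mathcal{P}}=\|R\circ u\circ id_E\|_{\mathcal{I}\circ\mathcal{P}}\le C_n\|R\|\|u\|_\mathcal{I}^n$; the infimum yields (b). For the converse (b) $\Rightarrow$ (a), write $P=u\circ Q$ with $u\in\mathcal{I}$ and $Q\in\mathcal{P}(^nE;G)$, observe that $R\circ u\in\mathcal{P}\circ\mathcal{I}$ with $\|R\circ u\|_{\mathcal{P}\circ\mathcal{I}}\le\|R\|\|u\|_\mathcal{I}^r$, invoke (b) to place $R\circ u$ in $\mathcal{I}\circ\mathcal{P}$, then compose on the right with $Q$ and with an arbitrary polynomial $\tilde Q$ using the hyper-ideal property from Theorem \ref{thipic}; tracking norms and taking the infimum over factorizations of $P$ recovers the two-sided ideal inequality with constants $(1,C_n)_n$.

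The substantive equivalence is (b) $\Leftrightarrow$ (c), and this is where the tensor product machinery is essential. For (c) $\Rightarrow$ (b), write $P = R\circ u \in \mathcal{P}\circ\mathcal{I}$ and exploit the factorization
\[
R\circ u = R_L\circ\delta_G\circ u = R_L\circ (\otimes^{n,s}u)\circ \delta_E.
\]
By (c), $\otimes^{n,s}u\in \mathcal{I}$ with $\|\otimes^{n,s}u\|_\mathcal{I}\le C_n\|u\|_\mathcal{I}^n$, so $R_L\circ(\otimes^{n,s}u)\in\mathcal{I}$ by the ideal property, and composing with $\delta_E$ (a polynomial of norm $1$) exhibits $P$ as an element of $\mathcal{I}\circ\mathcal{P}$ with norm $\le C_n\|R\|\|u\|_\mathcal{I}^n$; taking infima gives the inclusion with constants $(C_n)_n$.

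The main obstacle is the reverse direction (b) $\Rightarrow$ (c), which requires extracting the abstract tensor-product operator from an arbitrary factorization. Given $u\in\mathcal{I}(E;F)$, consider the polynomial $\delta_F\circ u\in\mathcal{P}(^nE;\widehat{\otimes}^{n,s}_{\pi_s}F)$, which lies in $\mathcal{P}\circ\mathcal{I}$ with norm at most $\|u\|_\mathcal{I}^n$. By (b) it belongs to $\mathcal{I}\circ\mathcal{P}$, so there exist $v\in\mathcal{I}$ and $Q\in\mathcal{P}$ with $\delta_F\circ u = v\circ Q$. Linearize $Q=Q_L\circ\delta_E$ and compare with the identity $\delta_F\circ u = (\otimes^{n,s}u)\circ\delta_E$ to obtain $(\otimes^{n,s}u)\circ\delta_E = (v\circ Q_L)\circ\delta_E$. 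The key step is then to conclude $\otimes^{n,s}u = v\circ Q_L$ by invoking the density of the linear span of $\delta_E(E)$ in $\widehat{\otimes}^{n,s}_{\pi_s}E$ together with linearity and continuity of both sides. Since $v\circ Q_L\in\mathcal{I}$ and $\|v\circ Q_L\|_\mathcal{I}\le\|v\|_\mathcal{I}\|Q\|$, taking the infimum over factorizations of $\delta_F\circ u$ in $\mathcal{I}\circ\mathcal{P}$ and applying (b) delivers $\|\otimes^{n,s}u\|_\mathcal{I}\le C_n\|u\|_\mathcal{I}^n$, which is precisely s-tensorstability with constants $(C_n)_n$.
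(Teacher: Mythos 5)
Your proof is correct and follows essentially the same route as the paper: (a)\,$\Leftrightarrow$\,(b) uses that the linear component of $\mathcal{I}\circ\mathcal{P}$ is $\mathcal{I}$ together with Theorem~\ref{thipic}, and (b)\,$\Leftrightarrow$\,(c) hinges on the same linearization identity $\widehat{\sigma}_n^F\circ u = (\otimes^{n,s}u)\circ\widehat{\sigma}_n^E$. The one (minor) deviation is in (b)\,$\Rightarrow$\,(c): where the paper cites \cite[Propositions 3.2(b) and 3.7(b)]{bpr2} to pass from $P=\widehat{\sigma}_n^F\circ u\in\mathcal{I}\circ\mathcal{P}$ to $P_L\in\mathcal{I}$ with $\|P_L\|_{\mathcal{I}}=\|P\|_{\mathcal{I}\circ\mathcal{P}}$, you instead take an arbitrary factorization $\widehat{\sigma}_n^F\circ u=v\circ Q$, linearize $Q$, and identify $\otimes^{n,s}u=v\circ Q_L$ by density of the span of elementary symmetric tensors; this gives the one needed inequality $\|\otimes^{n,s}u\|_\mathcal{I}\le\|\widehat{\sigma}_n^F\circ u\|_{\mathcal{I}\circ\mathcal{P}}$ directly, making the step self-contained at no extra cost.
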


\begin{proof}(a)$\Longrightarrow$(b) Given $P\in\mathcal{P}\circ\mathcal{I}(^nE;F)$,  there are a Banach space $G$, a linear operator $u\in\mathcal{I}(E;G)$ and a polynomial $Q\in\mathcal{P}(^nG;F)$ such that $P=Q\circ u$. Since the linear component of $\mathcal{I}\circ\mathcal{P}$ is the operator ideal $\mathcal{I}$ and $\mathcal{I}\circ\mathcal{P}$ is a polynomial two-sided ideal by assumption, we conclude that $P=Q\circ u$ belongs $\mathcal{I}\circ\mathcal{P}$ and $$\|P\|_{\mathcal{I}\circ\mathcal{P}}\le C_n\|Q\|\cdot\|u\|_{\mathcal{I}\circ\mathcal{P}}^n = C_n\|Q\|\cdot\|u\|_{\mathcal{I}}^n.$$ Taking the infimum over all factorizations it follows that $\|P\|_{\mathcal{I}\circ\mathcal{P}}\le C_n\|P\|_{\mathcal{P}\circ\mathcal{I}}$.\\
(b) $\Longrightarrow$ (a) According to the definition of two-sided ideals (or to the corresponding series criterion in the complete case) and using that $\mathcal{I}\circ\mathcal{P}$ is a $p$-normed ($p$-Banach) polynomial hyper-ideal (Theorem \ref{thipic}), we only need to check the right-hand side of  the two-sided ideal property. To do so, let $P\in \mathcal{I}\circ\mathcal{P}(^nE;F)$ and $R\in\mathcal{P}(^rF;H)$. There are a Banach space $G$, an $n$-homogenous polynomial $Q\in\mathcal{P}(^nE;G)$ and a linear operator $u\in\mathcal{I}(G;F)$ such that $P=u\circ Q$. By assumption, $R \circ u \in {\cal P} \circ {\cal I} \subseteq {\cal I} \circ {\cal P}$, so the hyper-ideal property of ${\cal I} \circ {\cal P}$ gives  $$R\circ P=(R\circ u)\circ Q \in {\cal I} \circ {\cal P}(^{rn}E;H),$$ and $$\|R\circ P\|_{\mathcal{I}\circ\mathcal{P}}\le\|R\circ u\|_{\mathcal{I}\circ\mathcal{P}}\cdot\|Q\|^r\le C_r\|R\circ u\|_{\mathcal{P}\circ\mathcal{I}}\cdot\|Q\|^r\le C_r\|R\|\cdot\|u\|_\mathcal{I}^r\cdot\|Q\|^r,$$ where the second inequality follows from the assumption. It follows that $\|R\circ P\|_{\mathcal{I}\circ\mathcal{P}}\le C_r\|R\|\cdot\|P\|_{\mathcal{I}\circ\mathcal{P}}^r$.\\
(b)$\Longrightarrow$(c) Let $u\in\mathcal{I}(E;F)$. Considering the canonical $n$-homogenous polynomial $\widehat{\sigma}_n^F\colon F\longrightarrow\widehat{\otimes}^{n,s}_{\pi_s}F$, given by $\widehat{\sigma}_n^F(x)=\otimes^n x$, by assumption we have
$$P:=\widehat{\sigma}_n^F\circ u\in \mathcal{P}\circ\mathcal{I}(^nE;\widehat{\otimes}^{n,s}_{\pi_s}F)\subseteq \mathcal{I}\circ\mathcal{P}(^nE;\widehat{\otimes}^{n,s}_{\pi_s}F)$$
and the corresponding norm inequality. By \cite[Propositions 3.2(b) and 3.7(b)]{bpr2} we have that $P_L\in \mathcal{I}(\widehat{\otimes}^{n,s}_{\pi_s}E;\widehat{\otimes}^{n,s}_{\pi_s}F)$ and $\|P_L\|_\mathcal{I}=\|P\|_{\mathcal{I}\circ\mathcal{P}}$. For every $x\in E$,  $$P_L(\otimes^n x)=\widehat{\sigma}_n^F\circ u(x)=\otimes^n u(x)=\otimes^{n,s}u(\otimes^n x).$$ As $P_L$ and $\otimes^{n,s}u$ are continuous linear operators, it follows that $$\otimes^{n,s}u = P_L\in\mathcal{I}(\widehat{\otimes}^{n,s}_{\pi_s}E;\widehat{\otimes}^{n,s}_{\pi_s}F).$$ Moreover, $$\|\otimes^{n,s}u\|_\mathcal{I}=\|P_{L}\|_\mathcal{I}=\|P\|_{\mathcal{I}\circ\mathcal{P}}\le C_n\|P\|_{\mathcal{P}\circ\mathcal{I}}\le C_n\|\widehat{\sigma}_n^F\|\cdot\|u\|_\mathcal{I}^n=C_n\|u\|_\mathcal{I}^n.$$
(c) $\Longrightarrow$ (b) Let $P\in\mathcal{P}\circ\mathcal{I}(^nE;F)$, $G$ be a Banach space, $u\in \mathcal{I}(E;G)$ and $Q\in\mathcal{P}(^nG;F)$ be such that $P=Q\circ u$. By assumption we know that $\otimes^{n,s}u\in\mathcal{I}(\widehat{\otimes}^{n,s}_{\pi_s}E;\widehat{\otimes}^{n,s}_{\pi_s}F)$ and $\|\otimes^{n,s}u\|_\mathcal{I}\le C_n\|u\|_\mathcal{I}^n$. Writing $$P=Q\circ u=(Q_L\circ\otimes^{n,s}u)\circ\widehat{\sigma}_n^E,$$ we get $P\in\mathcal{I}\circ\mathcal{P}(^nE;F)$ and $$\|P\|_{\mathcal{I}\circ\mathcal{P}}\le\|Q_L\|\cdot\|\otimes^{n,s}u\circ
\widehat{\sigma}_n^E\|_{\mathcal{I}\circ\mathcal{P}}\le\|Q\|\cdot \|\otimes^{n,s}u\|_{\mathcal{I}}\cdot\|\widehat{\sigma}_n^E\|\le C_n\|Q\|\cdot\|u\|_\mathcal{I}^n.$$
It follows that $\|P\|_{\mathcal{I}\circ\mathcal{P}}\le C_n\|P\|_{\mathcal{P}\circ\mathcal{I}}$.\end{proof}

Of course the proposition above can be used to give positive and negative examples. Let us start with a negative example regarding a celebrated operator ideal.

\begin{ex}\label{exabs}\rm Let $\Pi_p$, $p \geq 1$, be the Banach ideal of absolutely $p$-summing linear operators. By Theorem \ref{thipic}, $\Pi_p\circ\mathcal{P}$ is a Banach polynomial hyper-ideal. Consider the class $\mathcal{P}_{d,p}$ of $p$-dominated polynomials, which also goes back to Pietsch \cite{pietsch}. In \cite[Example 1]{botelho2} (see also \cite[Remark 47]{botelho1}), it is established that
$$P_n\colon\ell_1\longrightarrow\ell_1~,~P_n((\lambda_j)_{j=1}^\infty)=((\lambda_j)^n)_{j=1}^\infty,$$ is a $p$-dominated non-weakly compact $n$-polynomial. Since $\Pi_p \circ {\cal P}\subseteq \mathcal{W} \circ {\cal P}= \mathcal{P}_\mathcal{W}$, $P_n$ does not belong to $\Pi_p\circ\mathcal{P}$. From  $\mathcal{P}_{d,p}= \mathcal{P}\circ \Pi_p$ \cite[Proposition 46(a)]{botelho1}, it follows that $P_n$ belongs to $\mathcal{P}\circ \Pi_p$. Thus $\mathcal{P}\circ \Pi_p\nsubseteq\Pi_p\circ\mathcal{P}$, and from Theorem \ref{phipic} we conclude that $\Pi_p\circ\mathcal{P}$ is not a polynomial two-sided ideal. In Example \ref{extensorstable} we shall see that  $\Pi_p^{\rm dual}\circ\mathcal{P}$ is a Banach polynomial two-sided ideal.\end{ex}

Before giving positive examples, let us treat a case that, to the best of our knowledge, is open.

\begin{ex}\label{exex} \rm Let $\mathcal{CC}$ denote the closed ideal of completely continuous linear operators (weakly convergent sequences are sent to norm convergent sequences). Assuming that $\mathcal{CC}\circ\mathcal{P}$ is a polynomial two-sided ideal, by Theorem \ref{phipic}, $\mathcal{CC}$ would be s-tensorstable. Noting that $id_{\widehat{\otimes}^{n,s}_{\pi_s}E} = \otimes^{n,s}id_E$, in this case it would be true that the symmetric projective tensor product of a Schur space is a Schur space. But it is an open problem if the projective tensor product of a Schur space is a Schur space, so it is unknown if $\mathcal{CC}$ is s-tensorstable, that is, if $\mathcal{CC}\circ\mathcal{P}$ is a polynomial two-sided ideal. In our opinion, it is likely that the answer to the aforementioned open problem will turn out to be negative (see \cite[p.\,19]{ewerton}), so we conjecture that $\mathcal{CC}\circ\mathcal{P}$ is not a polynomial two-sided ideal. Recall the closed polynomial two-sided ideal ${\cal P}_{psc}$ of polynomially sequentially continuous polynomials (Proposition \ref{proppsc}).  It is easy to check that $\mathcal{CC}\circ\mathcal{P}\subseteq\mathcal{P}_{psc}$. The conjecture above yields that the inclusion is strict.\end{ex}

In order to give examples of operators ideal $\cal I$ for which ${\cal I} \circ {\cal P}$ is a polynomial two-sided ideal using Theorem \ref{phipic}, we recall the notion of {\it $\pi$-tensorstable ideals} \cite[Section 34]{klauslivro}, which we shall simply call {\it tensorstable ideals}. Denoting by $E_1\widehat{\otimes}_{\pi}\cdots\widehat{\otimes}_{\pi}E_n$ the completed $n$-fold projective tensor product of $E_1, \ldots, E_n$, given operators $u_1\in\mathcal{L}(E_1;F_1),\ldots, u_n\in\mathcal{L}(E_n;F_n)$, there is a unique operator $u_1\otimes\cdots\otimes u_n\colon E_1\widehat{\otimes}_{\pi}\cdots\widehat{\otimes}_{\pi}E_n$ $\longrightarrow F_1\widehat{\otimes}_{\pi}\cdots\widehat{\otimes}_{\pi}F_n$ such that $$u_1\otimes\cdots\otimes u_n(x_1\otimes\cdots\otimes x_n)=u_1(x_1)\otimes\cdots\otimes u_n(x_n),$$
for $x_1 \in E_1, \ldots, x_n \in E_n$. When $u = u_1 = \cdots = u_n$, we write simply $\otimes^n u$.

\begin{definition}\rm A $p$-normed operator ideal $\mathcal{I}$ is \textit{tensorstable} if there is a sequence $(C_n)_{n=1}^\infty$ of positive numbers such that, for $n\in\mathbb{N}$ and operators $u_1\in\mathcal{I}(E_1;F_1),\ldots u_n\in\mathcal{I}(E_n;F_n)$, it holds $$u_1\otimes\cdots\otimes u_n\in\mathcal{I}(E_1\widehat{\otimes}_{\pi}\cdots\widehat{\otimes}_{\pi}E_n;F_1\widehat{\otimes}_{\pi}\cdots\widehat{\otimes}_{\pi}F_n)$$ and
$\|u_1\otimes\cdots\otimes u_n\|_\mathcal{I}\le C_n\|u_1\|_\mathcal{I}\cdots\|u_n\|_\mathcal{I}$.  \end{definition}

The definition of tensorstability is usually presented for two operators, that is, for $n=2$ (see, e.g., \cite{defant2, klauslivro}). The associativity of the projective tensor product guarantees that this definition is equivalent to the one above.

\begin{prop}\label{ultpro} Every tensorstable operator ideal with constants $(C_n)_{n=1}^\infty$ is s-tensorsta-ble with constants $\ap\frac{C_n n^n}{n!}\fp_{n=1}^\infty$.\end{prop}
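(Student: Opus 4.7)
The plan is to reduce the symmetric tensor power $\otimes^{n,s}u$ to the full tensor power $\otimes^n u$ (where the tensorstability hypothesis applies directly) by sandwiching it between two natural operators: an embedding of $\widehat{\otimes}^{n,s}_{\pi_s}E$ into the full $n$-fold projective tensor product $\widehat{\otimes}^{n}_{\pi}E$, and a symmetrization from $\widehat{\otimes}^{n}_{\pi}F$ onto $\widehat{\otimes}^{n,s}_{\pi_s}F$. The linear ideal property of $\mathcal{I}$ then transports the tensorstability estimate to the symmetric setting, and the factor $\frac{n^n}{n!}$ in the final constant will arise from the norm of the symmetrization.

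First I would produce an embedding $\iota_n^E\in\mathcal{L}(\widehat{\otimes}^{n,s}_{\pi_s}E; \widehat{\otimes}^{n}_{\pi}E)$ with $\|\iota_n^E\|\le 1$: any algebraic representation $z=\sum_i\lambda_i\otimes^n x_i$ of a symmetric tensor is a fortiori a representation in $\otimes^n E$, so $\pi(z)\le\sum_i|\lambda_i|\|x_i\|^n$, hence $\pi\le\pi_s$ on $\otimes^{n,s}E$, and this inequality extends to the completions. Next I would produce a symmetrization $\rho_n^F\in\mathcal{L}(\widehat{\otimes}^{n}_\pi F; \widehat{\otimes}^{n,s}_{\pi_s}F)$ with $\|\rho_n^F\|\le\frac{n^n}{n!}$, defined by linearizing the $n$-linear map $(y_1,\ldots,y_n)\mapsto \frac{1}{n!}\sum_{\tau\in S_n}y_{\tau(1)}\otimes\cdots\otimes y_{\tau(n)}$ viewed as a symmetric tensor. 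Rewriting this value via the polarization identity as $\frac{1}{n!\,2^n}\sum_\varepsilon\varepsilon_1\cdots\varepsilon_n\,\otimes^n\!\bigl(\textstyle\sum_{j}\varepsilon_jy_j\bigr)$, bounding $\|\sum_j\varepsilon_jy_j\|\le n$ when $\|y_1\|,\ldots,\|y_n\|\le 1$, and summing over the $2^n$ signs yields the bound $\frac{n^n}{n!}$ for the $n$-linear map; the universal property of $\widehat{\otimes}^{n}_\pi F$ then lifts this to the same bound on $\rho_n^F$.

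With both auxiliary operators in hand, I would verify on the dense set of elementary symmetric tensors $\otimes^n x$ the identity
$$\otimes^{n,s}u \;=\; \rho_n^F\circ(\otimes^n u)\circ \iota_n^E,$$
since both sides send $\otimes^n x$ to $\otimes^n u(x)$; continuity then extends it to all of $\widehat{\otimes}^{n,s}_{\pi_s}E$. Tensorstability provides $\otimes^n u\in\mathcal{I}(\widehat{\otimes}^{n}_\pi E;\widehat{\otimes}^{n}_\pi F)$ with $\|\otimes^n u\|_\mathcal{I}\le C_n\|u\|_\mathcal{I}^n$, and the linear ideal property applied to the above factorization yields
$$\|\otimes^{n,s}u\|_\mathcal{I}\le\|\rho_n^F\|\cdot\|\otimes^n u\|_\mathcal{I}\cdot\|\iota_n^E\|\le\frac{n^n}{n!}\cdot C_n\|u\|_\mathcal{I}^n\cdot 1 = \frac{C_n\,n^n}{n!}\,\|u\|_\mathcal{I}^n,$$
which is the required estimate.

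The main obstacle is the norm bound $\|\rho_n^F\|\le \frac{n^n}{n!}$: this is precisely where the polarization constant of an $n$-homogeneous polynomial enters and where the factor $\frac{n^n}{n!}$ is generated. The embedding step and the verification of the factorization on elementary tensors are essentially formal, and the remainder is a one-line application of the linear ideal property to the three-fold composition.
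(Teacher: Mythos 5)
Your proof is correct and follows essentially the same route as the paper: factor $\otimes^{n,s}u$ through $\widehat{\otimes}^n_\pi E \to \widehat{\otimes}^n_\pi F$ as an inclusion followed by $\otimes^n u$ followed by a symmetrization, then apply tensorstability and the linear ideal property. The only difference is that you derive the bound $\frac{n^n}{n!}$ on the symmetrization by hand via polarization, whereas the paper simply cites the polarization constant from Floret's \cite[Proposition 2.3]{klaus3}.
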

\begin{proof}Let $\cal I$ be a tensorstable operator ideal with constants $(C_n)_{n=1}^\infty$ and $u\in\mathcal{I}(E;F)$. Consider the inclusion operator $\iota_E^n\colon\widehat{\otimes}_{\pi_s}^{n,s}E\longrightarrow\widehat{\otimes}_\pi^nE$ and the symmetrization $n$-linear operator $$S_F^n\colon F^n\longrightarrow\widehat{\otimes}_{\pi_s}^{n,s}F~, ~S_F^n(y_1,\ldots,y_n)=\dfrac{1}{n!}\sum\limits_{\eta\in S_n}y_{\eta(1)}\otimes\cdots\otimes y_{\eta(n)}.$$ Call $(S_F^n)_L$ the linearization of $S_F^n$, that is, $(S_F^n)_L \in {\cal L}(\widehat{\otimes}_\pi^n F;\widehat{\otimes}_{\pi_s}^{n,s} F) $ and $(S_F^n)_L(x_1 \otimes \cdots \otimes x_n) = S_F^n(x_1, \ldots, x_n)$. For every $x \in E$, \begin{align*}(S_F^n)_L\circ\otimes^{n}u\circ\iota_E^n(\otimes^n x)&=(S_F^n)_L\circ\otimes^{n}u(\otimes^n x)=(S_F^n)_L(\otimes^n u(x))=\otimes^n u(x)=\otimes^{n,s}u(\otimes^n x).\end{align*} Since $(S_F^n)_L\circ\otimes^{n}u\circ\iota_E^n$ and $\otimes^{n,s}u$ are continuous linear operators, it follows that $\otimes^{n,s}u=(S_F^n)_L\circ\otimes^{n}u\circ\iota_E^n$. The tensorstability of $\cal I$ gives $\otimes^{n,s}u\in\mathcal{I}(\widehat{\otimes}^{n,s}_{\pi_s}E;\widehat{\otimes}^{n,s}_{\pi_s}F)$ and $$\|\otimes^{n,s}u\|_\mathcal{I}\le\|(S_F^n)_L\|\cdot\|\otimes^{n}u\|_\mathcal{I}\cdot\|\iota_E^n\|\le\dfrac{C_n n^n}{n!}\|u\|_\mathcal{I}^n,$$
because $\|\iota_E^n\|=1$ and $\|(S_F^n)_L\|=c(n,E)\le\dfrac{n^n}{n!}$ \cite[Proposition 2.3]{klaus3}.
\end{proof}

We finish the paper by providing examples of operator ideals $\cal I$ for which ${\cal I} \circ {\cal P}$ is a polynomial two-sided ideal.

\begin{ex}\label{extensorstable}\rm  The following Banach operator ideals $\cal I$ are tensorstable, hence s-tensor-stable by Proposition \ref{ultpro}, therefore ${\cal I} \circ {\cal P}$ is a Banach polynomial two-sided ideal by Theorem \ref{phipic}:\\
(a) The dual $\Pi_p^{\rm dual}$ of the ideal $\Pi_p$ of absolutely summing $p$-operators, which coincides with the maximal hull ${\cal K}_p^{\rm max}$ of the ideal ${\cal K}_p$ of $p$-compact operators \cite[Theorems 12,\,24,\,25]{pietschpams}.\\
(b) The closed ideal $\mathcal{S}$ of separable operators \cite[Example 3.5(a)]{sonia}.\\
(c) The ideals $\cal \overline{F}^{\|\cdot\|}$ of approximable operators and $\cal N$ of nuclear operators \cite[34.1]{klauslivro}.\\
(d) The ideal $\mathcal{J}$ of integral operators \cite[Theorem 2]{holub}.\\
(e) The ideal $\mathcal{L}_{\infty,q,\gamma}$ of Lorentz-Zygmund operators, $0<q\le1$ and $-1/q<\gamma<\infty$ \cite[Theorem 3.1]{resina}.\\
(f) The ideals $\mathcal{L}_{1,q}$, $q>1$, of $(1,q)$-factorable operators and $\mathcal{K}_{1,p}$, $p>1$, of $(1,p)$-compact operators \cite[Theorem 2.1]{defant2}.\\
(g) For $p \geq 1$, the dual $\mathcal{J}_p^{dual}$ of the ideal $\mathcal{J}_p$ of $p$-integral operators, and the surjective hull $\mathcal{K}_{1,p}^{sur}$ of the ideal $\mathcal{K}_{1,p}$ of $(1,p)$-compact operators \cite[Corollary 34.5.2(2)]{klauslivro}.

As Carl, Defant and Ramanujan perspicaciously observed in \cite{defant2}, the constants on the norm inequality appear naturally in the proof of the tensor stability.\end{ex}

\noindent{\bf Open question.} We do not know if every s-tensorstable operator ideal is tensorstable. If yes, Example \ref{exabs} will have a simpler reasoning (in \cite[Corollary 34.3.1]{klauslivro} it is proved that $\Pi_p$ is not tensorstable); and Theorem \ref{phipic} will have one more equivalent condition. But the reader should have in mind that things that work well for homogeneous polynomials might not work so well for multilinear operators. For example, Leung \cite{leung} proved that the dual $J'$ of the James space $J$ is symmetrically regular but fails to be regular.

\noindent Faculdade de Matem\'atica\\
Universidade Federal de Uberl\^andia\\
38.400-902 -- Uberl\^andia, Brazil\\
e-mails: botelho@ufu.br, ewertonrtorres@gmail.com.

\end{document}